\documentclass[pdflatex,sn-mathphys-num]{sn-jnl}


\usepackage{graphicx}%
\usepackage{multirow}%
\usepackage{amsmath,amssymb,amsfonts}%
\usepackage{amsthm}%
\usepackage{mathrsfs}%
\usepackage[title]{appendix}%
\usepackage{xcolor}%
\usepackage{textcomp}%
\usepackage{manyfoot}%
\usepackage{booktabs}%
\usepackage{algorithm}%
\usepackage{algorithmicx}%
\usepackage{algpseudocode}%
\usepackage{listings}%
\usepackage{subcaption}
\usepackage{multirow}
\usepackage[shortlabels]{enumitem}

\usepackage{pifont}

\usepackage{tikz}
\usetikzlibrary{arrows,arrows.meta,intersections,shapes.geometric,positioning,calc,patterns}

\usepackage{resizegather}


\theoremstyle{thmstyleone}%
\newtheorem{theorem}{Theorem}[section]
\newtheorem{proposition}[theorem]{Proposition}%

\theoremstyle{thmstyletwo}%
\newtheorem{remark}{Remark}%

\theoremstyle{definition}%

\raggedbottom

\DeclareMathOperator*{\argmin}{\arg\min}

\begin{document}

\title[Effectively Leveraging Momentum Terms in Stochastic Line Search Frameworks for Fast Optimization of Finite-Sum Problems]{Effectively Leveraging Momentum Terms in Stochastic Line Search Frameworks for Fast Optimization of Finite-Sum Problems}


\author[1]{\fnm{Matteo} \sur{Lapucci}}\email{matteo.lapucci@unifi.it}

\author*[1]{\fnm{Davide} \sur{Pucci}}\email{davide.pucci@unifi.it}

\affil[1]{\orgdiv{Global Optimization Laboratory, Department of Information Engineering}, \orgname{University of Florence}, \orgaddress{\street{Via di Santa Marta, 3}, \city{Florence}, \postcode{50139}, \country{Italy}}}


\abstract{
	In this work, we address unconstrained finite-sum optimization problems, with particular focus on instances originating in large scale deep learning scenarios. Our main interest lies in the exploration of the relationship between recent line search approaches for stochastic optimization in the overparametrized regime and momentum directions. First, we point out that combining these two elements with computational benefits is not straightforward. To this aim, we propose a solution based on mini-batch persistency. We then introduce an algorithmic framework that exploits a mix of data persistency, conjugate-gradient type rules for the definition of the momentum parameter and stochastic line searches. The resulting algorithm provably possesses convergence properties under suitable assumptions and is empirically shown to outperform other popular methods from the literature, obtaining state-of-the-art results in both convex and nonconvex large scale training problems.}

\keywords{Finite-sum optimization, Stochastic line search, Momentum, Data persistency}

\pacs[MSC Classification]{90C30, 90C26, 90C06, 65K05, 68T07}

\maketitle

\section{Introduction}
\label{sec:intro}
In this paper we focus on finite-sum minimization problems where the objective function has a large number of possibly nonconvex terms. Precisely, we address the problem
\begin{equation} \label{eq:fin_sum_problem}
	\min_{x \in \mathbb{R}^n} f(x) = \frac{1}{N} \sum_{i=1}^{N} f_i(x),
\end{equation}
where $N$ is large and $f_i: \mathbb{R}^n \rightarrow \mathbb{R}$ are differentiable, possibly nonconvex functions for all $i \in \{1, \dots, N\}$. The development of high-performance algorithms for this scenario has been driven in recent years by the explosion of deep-learning applications: indeed, supervised learning tasks are instances of problem \eqref{eq:fin_sum_problem}. 

State-of-the-art methods to tackle problems of this form are primarily based on stochastic gradient descent (SGD) \cite{robbins1951} and its accelerated variants \cite{polyak1964, nesterov1983}. The use of stochastic gradient methods allows to leverage data redundancy to approach good solutions while carrying out low-cost iterations. A thorough discussion on stochastic gradient methods and their application in data science is available in \cite{bottou2018}.

Adaptive SGD approaches \cite{duchi2011, tieleman2012, zeiler2012, kingma2015} have established through the years as solid choices in the considered scenario. In particular, Adam algorithm \cite{kingma2015} is now largely considered the preferred optimizer in deep learning settings. 
In an alternative stream of research, variance-reduced approaches \cite{schmidt2017, rie2013, gower2023} have been studied and shown to possess faster convergence rates than the base algorithm and in line with full-batch gradient descent. However, in practice, these methods require significant computational resources, either needing periodic full gradient computations or high memory usage; for this reason, variance reduction methods are often found to be impractical to use in real-world scenarios with deep architectures and very large datasets. Moreover, the assumptions made to obtain the nice convergence results have been shown not to be satisfied in deep learning scenarios. In fact, no improvement in the convergence rate \cite{defazio2019} w.r.t.\ SGD is obtainable under more realistic assumptions.

In more recent years, the focus turned to the analysis of stochastic gradient methods under a specific assumption that is reasonable with modern deep learning models: the interpolation regime. In this scenario, the convergence rate of SGD was finally shown to match that of full-batch methods \cite{ma2018, vaswani2020}, giving some justification to the observed behavior of these methods. 
Moreover, under interpolation assumptions it was possible to design well-grounded strategies to choose the step size in an adaptive manner \cite{mutschler2020, loizou2021}. 

Now, one of the most promising developments concerns the use of stochastic line searches. In particular, the classical Armijo rule was adapted to be successfully used in the incremental scenario \cite{vaswani2019}. More recently, a nonmonotone version of the stochastic Armijo line search was proposed \cite{galli2023}. With these techniques, not only the convergence rate of standard gradient descent is shown to be obtainable, but very encouraging numerical results were also observed.

As widely recognized in the machine learning literature, the addition of momentum terms in the update rule has been known to be beneficial within SGD approaches \cite{sutskever2013importance, sebbouh2021almost, tseng1998incremental, jelassi2022towards, gitman2019understanding}; typically, this is attributed to the stabilizing effect on the direction and the speed-up in low curvature regions. It is still not clear, however, how this kind of additional term, which modifies the search direction, could be suitably handled in line search based frameworks. A heuristic choice for the step size in presence of momentum terms under interpolation assumptions has been proposed by \cite{wang2023}, but no decrease condition is coupled with this strategy. On the other hand, a backtracking strategy has been recently suggested, ensuring the resulting direction is of descent for the considered mini-batch \cite{fan2023}; for this latter approach, however, we will outline massive shortcomings both on the theoretical and on the computational sides.

In this manuscript, we therefore deal with the above challenge. 
Firstly, we shed light on an intrinsic issue related to the momentum direction in the incremental regime; in order to overcome it, at least partially, we propose to exploit the concept of mini-batch persistency \cite{fischetti2018}, which we also observe being beneficial on its own in certain settings. 

We can then focus on a suitable rule to define the parameter $\beta$ associated with the momentum term in the direction update. The proposed method exploits the strong connection of momentum methods and classical nonlinear conjugate gradient algorithms \cite[Ch.\ 12]{grippo2023introduction}: in both cases, the search direction is a linear combination of the current gradients and the previous direction, and the border between the two families of approaches is thin. The update rule is combined with safeguarding strategies based on restarting \cite{chan2022nonlinear} or subspace optimization \cite{lapucci2024globally}, that allow to guarantee crucial assumptions for the soundness of the resulting algorithm \cite{lapucci2024convergenceconditionsstochasticline}.

For the proposed algorithmic framework, we provide a brief discussion concerning the major, substantial challenges associated with the theoretical analysis of the proposed method - namely, the biasedness of the search direction in presence of data persistency and the relationship of the direction itself with the true gradient - and present a set of workarounds that make the algorithm provably convergent under PL condition and interpolation.

We finally present the results of in-depth computational experiments, showing that it is competitive and even outperforms in certain scenarios the main state-of-the-art optimizers for both convex (linear models) and nonconvex (deep networks) learning tasks.

The reminder of this paper is organized as follows: in Section \ref{sec:prelim} we recall the main concepts and algorithms to tackle finite sum problems, with a particular focus on algorithms that employ stochastic line searches under interpolation assumptions. In Section \ref{sec:persist}, we discuss the difficulties encountered when momentum terms are employed in combination with line searches (Section \ref{sec:sls_mom}), proposing in Section \ref{sec:mb_pers} a strategy to suitably leverage momentum information. In Section \ref{sec:cg} we describe some conjugate gradient rules to determine a suitable value for the parameter $\beta$ associated to the momentum term, that exploit data persistency. In Section \ref{sec:alg}, we present our algorithmic framework, which exploits momentum terms alongside stochastic line searches. In Sections \ref{sec:bias} and \ref{sec:direction} we address the main challenges involved in establishing convergence results for the proposed algorithmic framework, then reporting the main theoretical result in Section \ref{sec:main_theorem}. Section \ref{sec:exp} is devoted to computational experiments aimed at assessing the effectiveness of the proposed method. Finally, Section \ref{sec:conc} provides concluding remarks. 

\section{Preliminaries}
\label{sec:prelim}
Problem \eqref{eq:fin_sum_problem} is typically solved by means of stochastic gradient descent (SGD) type algorithms. Since $N$ is very large and the evaluation of exact derivatives is therefore expensive, at each iteration a cheap approximation of the gradient $\nabla f(x)$ is considered. The approximation is usually obtained randomly sampling a subset (referred to as \textit{mini-batch}) $B_k\subset\{1,\ldots,N\}$ and thus defining 
\begin{equation}
	\label{eq:gen_minibatch_fg}
	f_k(x) = \frac{1}{|B_k|}\sum_{i \in {B_k}}f_i(x),\qquad g_k(x)=\nabla f_k(x) = \frac{1}{|B_k|}\sum_{i \in {B_k}}\nabla f_i(x).
\end{equation}
The update rule is then given by $x^{k+1}=x^k+\alpha_kd_k$, where the step size $\alpha_k$ is often referred to as \textit{learning rate} and $d_k$ is the search direction.
A deep presentation of stochastic gradient methods for machine learning tasks is available in \cite{bottou2018}.
Under regularity assumptions, and using a suitable decreasing sequence of steps $\{\alpha_k\}$, the iterative scheme allows to guarantee convergence to stationarity in expectation. The associated worst-case complexity bound is $\mathcal{O}(\frac{1}{\epsilon^4})$ in the nonconvex setting \cite{ghadimi2013}, which becomes $\mathcal{O}(\frac{1}{\epsilon^2})$ and $\mathcal{O}(\frac{1}{\epsilon})$ under convexity and strong convexity assumptions, respectively \cite{bottou2018}. These bounds are worse than those obtainable by full-batch methods, which should be thus superior. Yet, the empirical evidence collected training deep machine learning models tells another story. 

Recently, some explanation to this gap between theory and numerical observations was found: modern machine learning models are usually expressive enough to fit any data point in the training set \cite{liang2020,ma2018}, i.e., the so-called \textit{interpolation condition} holds, which means that, given $x^* \in \arg \min_{x \in \mathbb{R}^n} f(x)$, then $x^* \in \arg \min_{x \in \mathbb{R}^n} f_i(x)$ for all $i \in \{1,\dots, N\}$.  
When the interpolation property holds, SGD methods can be shown to match gradient descent complexity of $\mathcal{O}(\frac{1}{\epsilon})$ in the convex case and its linear rate under strong convexity. 

Additionally, the landscape of the training loss of deep models has been studied \cite{liu2022}; while convexity and strong convexity seem not to hold, not even locally, another property appears to hold in large portions of the space: the \textit{Polyak-Lojasiewicz} (PL) condition \cite{polyak1987introduction}. Very interestingly, the linear convergence rate was shown to hold for SGD even with this weaker property, if interpolation is taken into account \cite{vaswani2019, loizou2021, galli2023}.

Interpolation implies a tighter relationship between the stochastic functions $f_{k}(x)$ and the true loss $f(x)$; this strong bond allowed to define more sophisticated strategies to choose the step size, with nice results both from the theoretical and the computational sides. Hereafter, we summarize some of these relevant linearly convergent approaches. 

Loizou et al.\ \cite{loizou2021} proposed a stochastic variant of the Polyak step size \cite{polyak1987introduction} for guessing the most promising step size for SGD at each iteration.
In particular, the \textit{Stochastic Polyak step size} (SPS) is obtained assuming the optimal function value for the current mini-batch objective $f_k$ to be some known value $f_{k}^*$ and reachable moving along $d_k$; the SPS is consequently obtained, minimizing a first-order model, setting
\begin{equation}
	\label{eq:sps}
	\alpha^{\text{SPS}}_k = \frac{f_{k}(x^k) - f_{k}^*}{c ||\nabla f_{k}(x^k)||^2},
\end{equation}
where $c > 0$ is a normalization parameter. In the same work, the bounded variant $\text{SPS}_{\text{max}}$ is also presented, considering $\alpha_k = \min \left\{\alpha^{\text{SPS}}_k, \alpha_\text{max} \right\}$ where $\alpha_\text{max} > 0$ is a suitable upper bound for the step size. The assumption on the value of $f_{k}^*$ is reasonable in most cases, as it is linked to the interpolation condition and the non-negative nature of loss functions in most machine learning applications. For instance, it can be set to 0 with unregularized least squares, or logistic losses or by some closed-form rule in the corresponding regularized cases (see, e.g., \cite[Sec.\ 2.1]{loizou2021}).

Inspired by the arguably top performing methods in classical nonlinear optimization, line searches have also been considered to achieve fast convergence in the interpolation regime. 
The first stochastic line search specifically designed for this scenario was introduced in \cite{vaswani2019}: the extension of an Armijo-type condition \cite{armijo1966} to the stochastic case is shown to lead to the desired linear convergence rate under reasonable assumptions. The line search is structured so that the progress in the current stochastic function is measured, looking for a sufficient decrease that depends on the stochastic gradient direction; 
formally, the proposed Armijo condition is
\begin{equation}
	\label{eq:stoch_armijo}
	f_{k}(x^{k}-\alpha_k g_k(x^k)) \leq f_k(x^k)-\gamma\alpha_k\| g_k(x^k)\|^2.
\end{equation}
Checking whether this condition is satisfied or not for a given step size $\alpha$ is rather cheap, only needing to make an additional evaluation of the stochastic function; a suitable step size $\alpha_k$ can thus be easily computed by a classical backtracking procedure; possibly, we would like to select a reasonable initial step size to reduce the number of backtracks, and thus function evaluations, to the bone; for this reason, heuristic strategies have been devised to determine a good guess step.

In fact, the strict decrease on the current stochastic objective might appear as an unnecessarily strong  condition to impose, especially in highly nonlinear and nonconvex problems. This observation was the main motivation, in \cite{galli2023}, to introduce a nonmonotone variant of the Armijo type condition. 
Letting $$C_k = \max\{\tilde{C}_k,f_{k}(x^k)\}, \quad \tilde{C}_k = \frac{\xi Q_kC_{k-1}+f_k(x^k)}{Q_{k+1}},\quad Q_{k+1} = \xi Q_k + 1,$$
so that $C_k$ is a value greater or equal to both a linear combination of previously computed (stochastic) function values the and the current value $f_k(x^k)$, the nonmonotone decrease condition is given by
\begin{equation}
	\label{eq:nonmonotone}
	f_{k}(x^{k}-\alpha_kg_k(x^k)) \leq C_k-\gamma\alpha_k\|g_k(x^k)\|^2.
\end{equation}
The employment of this condition instead of \eqref{eq:stoch_armijo} in line search based mini-batch GD might contribute to reduce the function evaluations required by repeated backtracks and to more often accept aggressive step sizes, without sacrificing convergence speed. Combined with an initial guess step of the form \eqref{eq:sps}, the resulting algorithm, named PoNoS, proves to be computationally very efficient at solving nonconvex learning tasks.

\section{Integrating Momentum Terms with SLS}
\label{sec:persist}
It is broadly accepted that the employment of momentum terms in gradient based optimization of finite-sum functions has a considerable positive impact \cite{bottou2018,wright2022optimization}. Specifically, heavy-ball like updates follow rules of the form
\begin{equation}
	\label{eq:momentum_update}
	x^{k+1} = x^k - \alpha_k g_k(x^k) + \beta_k(x^k-x^{k-1}),
\end{equation}
where $\beta_k$ is a suitable positive scalar. In the stochastic setting, partly repeating the previous update mainly helps at stabilizing the optimization trajectory and at taking larger steps in low-curvature regions. Momentum is thus employed in most state-of-the-art optimizers for large-scale nonconvex finite-sum problems, e.g., it is included in Adam \cite{kingma2015}.

It should not thus be surprising that, within the most recent lines of research concerning the selection of the step size, momentum direction ended up being considered. In this Section we discuss existing strategies to integrate momentum terms in modern stochastic optimization frameworks, highlighting a major shortcoming. We then propose an approach to suitably overcome the issue.

\subsection{Existing Strategies and Computational Shortcomings}
\label{sec:sls_mom}
A generalization of the Stochastic Polyak step size was introduced to determine the appropriate step size when considering a momentum-type direction \cite{wang2023}. In particular, the step size associated with moving average of gradients (MAG) directions of the form $d_k = -(1-\beta) \nabla f(x^k) + \beta (x^k - x^{k-1})$, being $\beta \in (0, 1)$ a fixed parameter, is defined as 
\begin{equation}
	\label{eq:sps_momentum}
	\alpha_k = \min \left\{\frac{f_k(x^k) - f_k^*}{c ||d_k||^2}, \alpha_\text{max} \right\},
\end{equation}
resulting in a simple, yet powerful extension of \eqref{eq:sps}. Of course, an analogous rule can be applied with directions of the form 
\begin{equation}
	\label{eq:hb_dir}
	d_k = -g_k(x^k) + \beta_k(x^{k}-x^{k-1});
\end{equation} 
yet, no hint is provided for the definition of a suitable $\beta_k$.

The combination of a momentum-type direction and the use of a stochastic line search was recently discussed \cite{fan2023}. The resulting algorithm, (called MSL\_SGDM) revolves around directions of the form \eqref{eq:hb_dir}. In case, for a given value of $\beta$, $d_k$ is not a descent direction for the current stochastic function $f_k$ (i.e., $d_k^T \nabla f_{_k} (x^k) > 0$), it is modified by either setting $\beta=0$ or by reducing $\beta$ until $d_k^T\nabla f_{k} (x^k) < 0$. 
Once $d_k$ is ensured to be of descent, the step size $\alpha_k$ in the update 
\begin{equation}
	\label{eq:gen_update rule}
	x^{k+1} = x^k+\alpha_kd_k
\end{equation} 
can be computed according to satisfy a generalized Armijo condition according to 
\begin{equation}
	\label{eq:gen_sls_rule}
	\alpha_k= \max_{j=0,1,\ldots}\{\alpha_0^k\delta^j\mid f_{k}(x_k+\alpha_0^k\delta^j d_k)\le f_{k}(x_k) +\gamma \alpha_0^k\delta^j d_k^T\nabla f_k(x_k) \}.
\end{equation}

By means of the same strategy, other families of directions could be considered, as long as $d_k$ can be manipulated to satisfy the ``stochastic'' descent property for the mini-batch; Adam in particular is considered in \cite{fan2023}, but Mean Average Gradient (MAG)  or Nesterov-type directions could also be handled by this framework in principle.

\medskip
Although apparently well planned, the strategy from \cite{fan2023} hides a significant drawback, as we will argue hereafter. Surely, it is a given fact that, in the incremental setup, using a momentum-based algorithm somehow allows to mitigate the presence of noise in gradient evaluations caused by the usage of mini-batches. 

Yet, if we carry out a line search to determine the step size at some iteration $k$, we are basing our choice on the behavior of the stochastic loss $f_{k}$ (depending in practice on the randomly drawn samples in mini-batch) along direction $d_k$. The term $x^k - x^{k-1}$ being added to the stochastic gradient, on the other hand, comes from a variables update that led a decrease in $f_{k-1}$; if $f_{k}$ and $f_{{k-1}}$ substantially differ from each other, the momentum direction $x^k - x^{k-1}$ might be unrelated to the new mini-batch loss, and possibly even point to increasing values of $f_{k}$. It might thus be not so unusual for the value of $\beta$ to be shrunk to very small values to obtain the decent property, and then several backtracks might also be needed to obtain sufficiently small step sizes guaranteeing the sufficient decrease.    
In order to overcome this issue, we would need $f_{{k-1}}$ to be somewhat similar to $f_{k}$.

\subsection{Leveraging Mini-Batch Persistency}
\label{sec:mb_pers}
For the problem highlighted in the previous Section \ref{sec:sls_mom}, we propose a practical solution that can be employed when the stochastic functions $f_k$ and $g_k$ are of the form \eqref{eq:gen_minibatch_fg}.

The strategy to enforce some form of similarity between $f_k$ and $f_{k-1}$ consists in the selection of some indexes from the preceding mini-batch ${B}_{k-1}$ when the new mini-batch ${B}_k$ is formed, so that $$\mathcal{R}_{k-1} = {B}_{k-1} \cap {B}_k \neq \emptyset.$$
In the context of machine learning, this idea translates into using a shared portion of data samples in successive mini-batches. The larger the overlap between mini-batches, the more similar $f_{{k-1}}$ and $f_{k}$ will be. This strategy is referred to as \textit{mini-batch persistency}, and it was observed to be beneficial even with vanilla SGD algorithms, especially when large batch sizes are considered \cite{fischetti2018}.

This idea will be crucial for the rest of the work. We therefore believe it is important to provide here readers with some additional arguments, of numerical nature. 
We thus present the results of preliminary computational experiments aimed at assessing the effect of mini-batch persistency in mini-batch GD optimization.
We considered two learning tasks: 
\begin{itemize}
	\item a convex problem - an RBF-kernel classifier  to be trained  on the \texttt{ijcnn} dataset\footnote{available at \href{https://www.csie.ntu.edu.tw/~cjlin/libsvmtools/datasets/binary.html}{csie.ntu.edu.tw/~cjlin/libsvmtools/datasets}} from LIBSVM \cite{libsvm2011};
	\item a nonconvex problem - a multi-layer perceptron with 1000 hidden units and ReLU activations to be trained on the \texttt{MNIST} \cite{mnist1998} dataset\footnote{available at \href{http://yann.lecun.com/exdb/mnist/}{yann.lecun.com/exdb/mnist/}}. 
\end{itemize} 

Both problems have been solved by means of vanilla mini-batch GD: for the two problems we set a fixed learning rate of $10^3$ and $10^{-2}$, respectively. 

During the execution of the algorithm, at the beginning of each iteration  we measure the angle between the momentum term $x^{k}-x^{k-1}$ and the (negative) stochastic gradient $-\nabla f_{k} (x^k)$ that we would get using different mini-batches $B_k$ obtained for various levels of overlap. In particular, we consider mini-batches with 0\%, 25\%, 50\%, 75\% and 100\% overlap (the latter showing what would happen if optimization were kept going on the same mini-batch). Note that the actual update will always be done with no momentum and no mini-batch overlap.  

The test was repeated for batch sizes 128 and 512, to observe the effect of persistency with small and large batch sizes.

Figure \ref{fig:overlap_analysis} shows the results of this analysis. We can clearly observe that the greater is the overlap, the closer the momentum direction is to $-\nabla f_{{k}} (x^k)$. We thus deduce that, unsurprisingly, with overlapped mini-batches functions $f_{{k-1}}$ and $f_{{k}}$ end up being more similar; this way, larger values of $\beta$ will be valid to obtain an overall heavy-ball direction of descent for $f_{{k}}$. In Table \ref{tab:overlap_analysis} we further report additional information on the overall number of times $x^{k}-x^{k-1}$ resulted in a non-descent direction for $f_{k}$ at $x_k$; the conclusions we can draw are similar as above.

\begin{figure}[htbp]
	\centering
	\subfloat[Kernel classifier on \texttt{ijcnn} - batch size 128.]{\includegraphics[width=.45\textwidth]{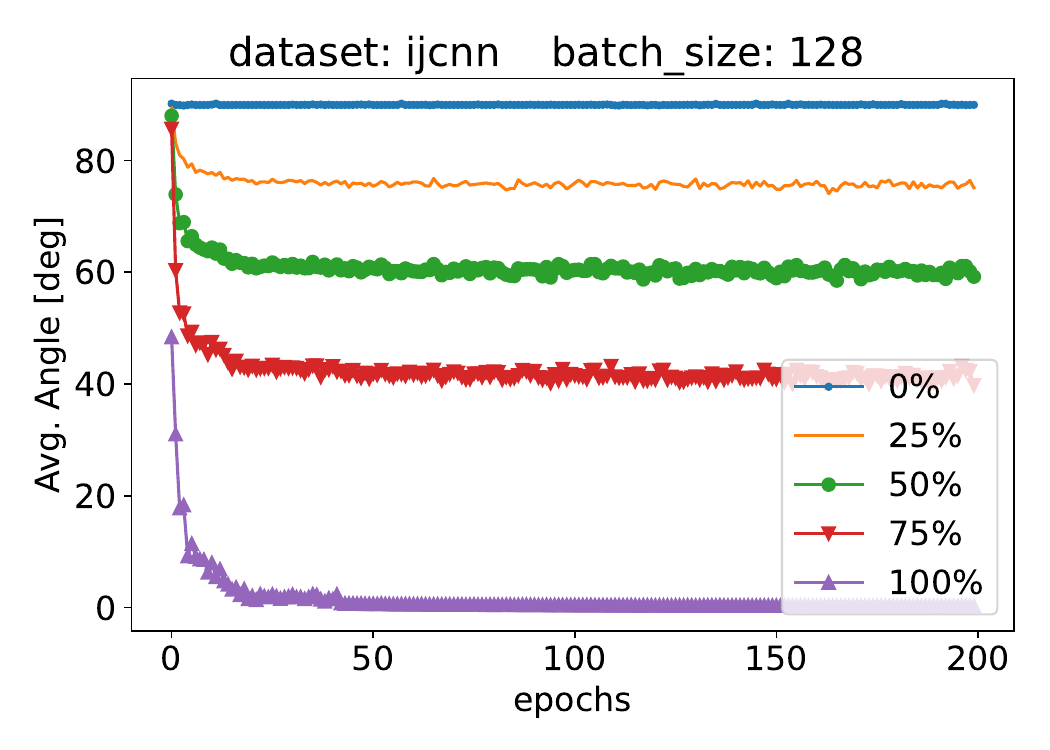}}
	\hfil
	\subfloat[Kernel classifier on \texttt{ijcnn} - batch size 512.]{\includegraphics[width=.45\textwidth]{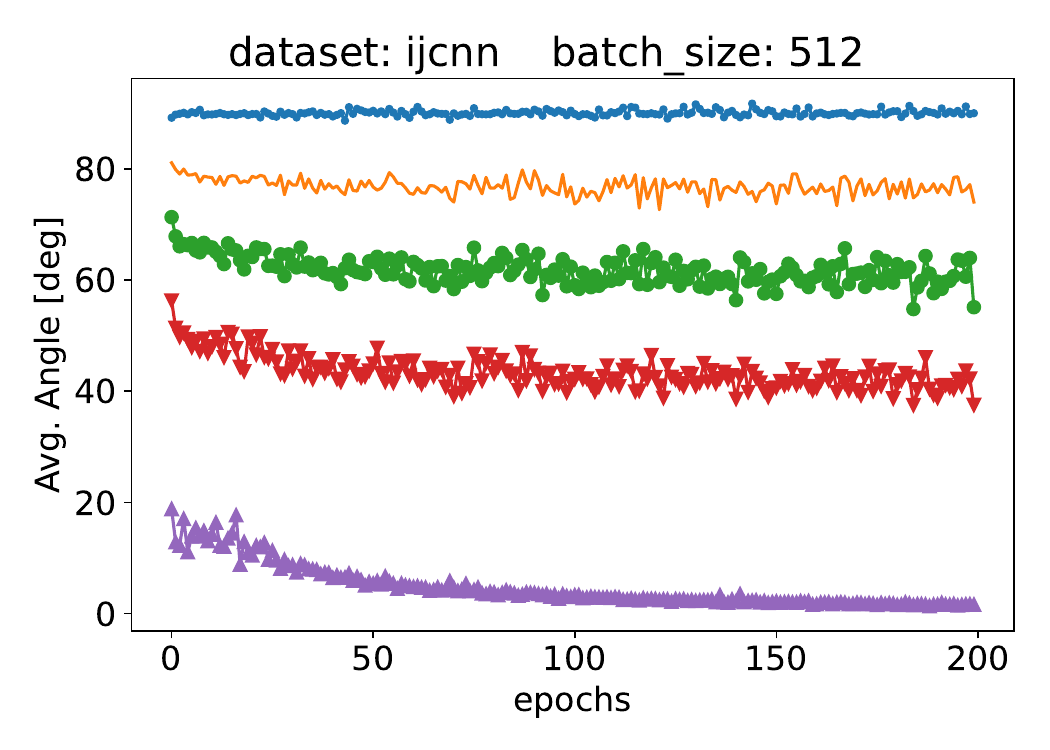}}
	\\
	\subfloat[Neural network classifier on \texttt{MNIST} - batch size 128.]{\includegraphics[width=.45\textwidth]{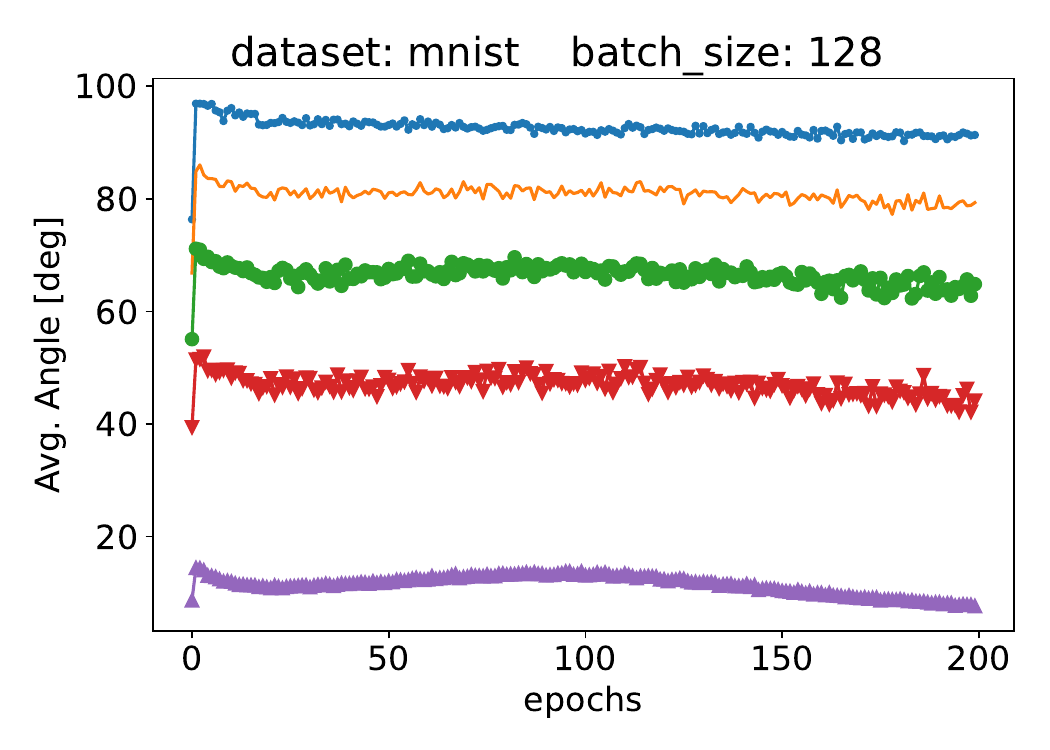}}
	\hfil
	\subfloat[Neural network classifier on \texttt{MNIST} - batch size 512.]{\includegraphics[width=.45\textwidth]{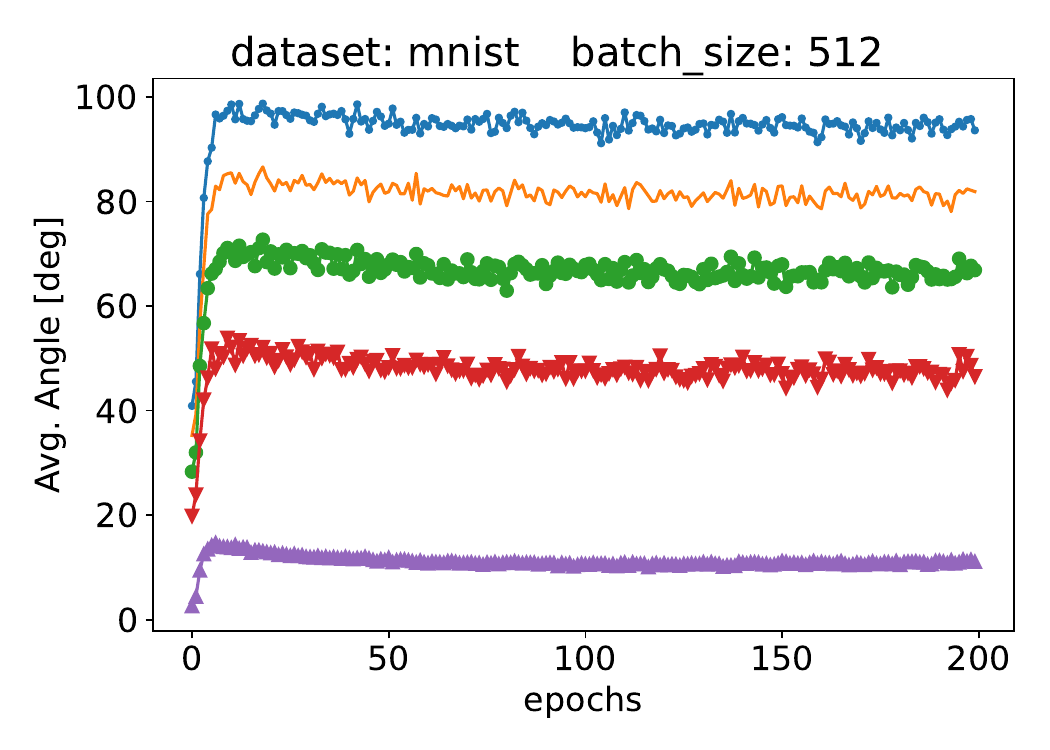}}	
	\caption{Average angle per mini-batch GD epoch (in degrees) between the momentum term and the negative stochastic gradient we would obtain for a new mini-batch with 0\%, 25\%, 50\%, 75\% and 100\% persistency.}
	\label{fig:overlap_analysis}
\end{figure}

\begin{table}[htbp]
	\caption{Total number of times, during training, that the momentum term is not a descent direction for a possible new mini-batch with various overlap percentages.}
	\label{tab:overlap_analysis}
	\centering
	\begin{tabular}{lcrrrrr}
		\toprule
		& & \multicolumn{5}{c}{Overlap}  \\
		\cmidrule(r){3-7}
		 Problem     & Batch Size & 0\% & 25\% & 50\% & 75\% & 100\%\\
		\midrule
		 \multirow{ 2}{*}{\texttt{ijcnn}}  & 128 & 5692 & 32 & 19 & 12 & 10   \\
		  & 512 & 1552 & 111 & 51 & 29 & 11  \\
		\midrule
		 \multirow{2}{*}{\texttt{MNIST}} & 128 & 54319 & 28282 & 10345 & 2069 & 5   \\
		  & 512 & 14745 & 6533 & 1325 & 64 & 0  \\
		\bottomrule
	\end{tabular}
\end{table}

We conclude the preliminary discussion on mini-batch persistency remarking that, in practice, any mini-batch GD type optimizer can benefit from overlapped mini-batches. 

\smallskip
A convenient solution to force a 50\% overlap in random reshuffling implementations \cite{gurbuzbalaban2021random} of mini-batch GD is to partition training data into $2N_b$ disjoint sets $\mathcal{R}^k_1, \dots, \mathcal{R}_{2N_b}^k$ at each epoch $\kappa$ and work, at iteration $j$ within the epoch, with the subset ${B}_{j}^\kappa = \mathcal{R}^\kappa_{j-1} \cup \mathcal{R}^\kappa_{j}$, so that, for all $j = 2, \ldots, 2N_b$, we have ${B}_{j}^\kappa \cap \mathcal{B}_{j+1}^\kappa = \mathcal{R}_{j}$ and ${B}_{j}^\kappa \cap{B}_{h}^\kappa =\emptyset$ for all $h>j+1$. With this approach, each data point is used twice (in subsequent iterations) within an epoch and the number of updates per epoch is doubled. Yet, given the sequential usage of the partitions $\mathcal{R}_j$, we can keep data in RAM/VRAM  without the need of reading the same data from disk twice per epoch. In other words, when overlap is used, the I/O time from disk required per epoch (which is one of the dominating components in the computational cost of algorithms in large-scale learning) does not increase w.r.t.\ the non-overlap case.

At this point, we find it useful to show how a 50\% mini-batch persistency affects SGD with momentum, Adam \cite{kingma2015}, PoNoS \cite{galli2023} and MSL\_SGDM \cite{fan2023}. To this aim, we look at the progress of the training loss through time on the \texttt{MNIST} problem already employed above. Each algorithm is run - for 3 different random seeds - for a  maximum of 200 epochs and is stopped earlier in case a solution $x^k$ s.t. $f(x^k) < 10^{-5}$ is found. Batch sizes of 128 and 512 are tested. The results shown in Figure \ref{fig:overlap_comparison} suggest that a 50\% overlap can lead to a significant improvement in the performance of these methods in the tasks considered. 

\begin{figure}[htbp]
	\centering
	\subfloat[SGD+M - bs: 128]{\includegraphics[width=.24\textwidth]{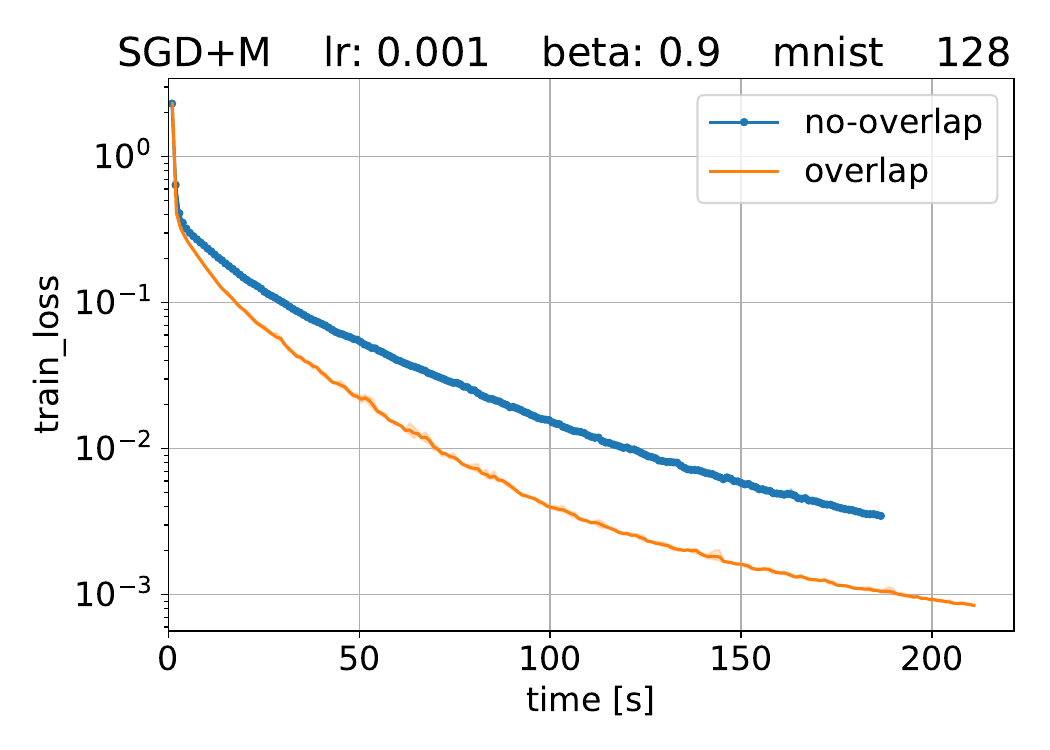}}
	\hfil
	\subfloat[Adam - bs: 128]{\includegraphics[width=.24\textwidth]{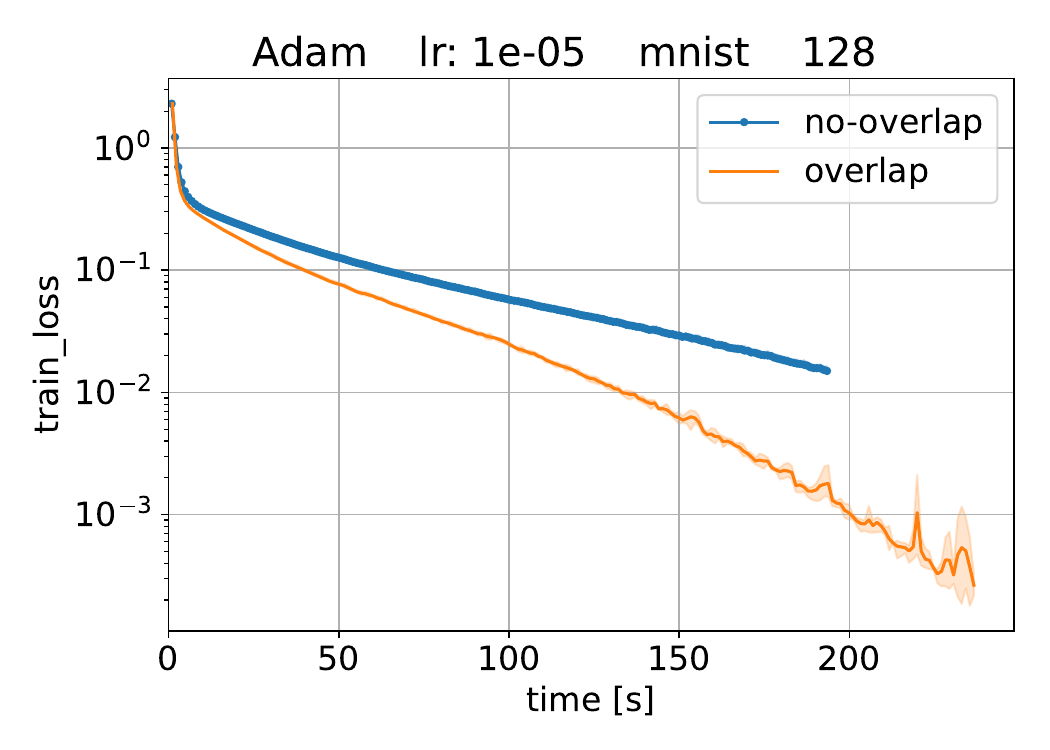}}
	\hfil\subfloat[PoNoS - bs: 128]{\includegraphics[width=.24\textwidth]{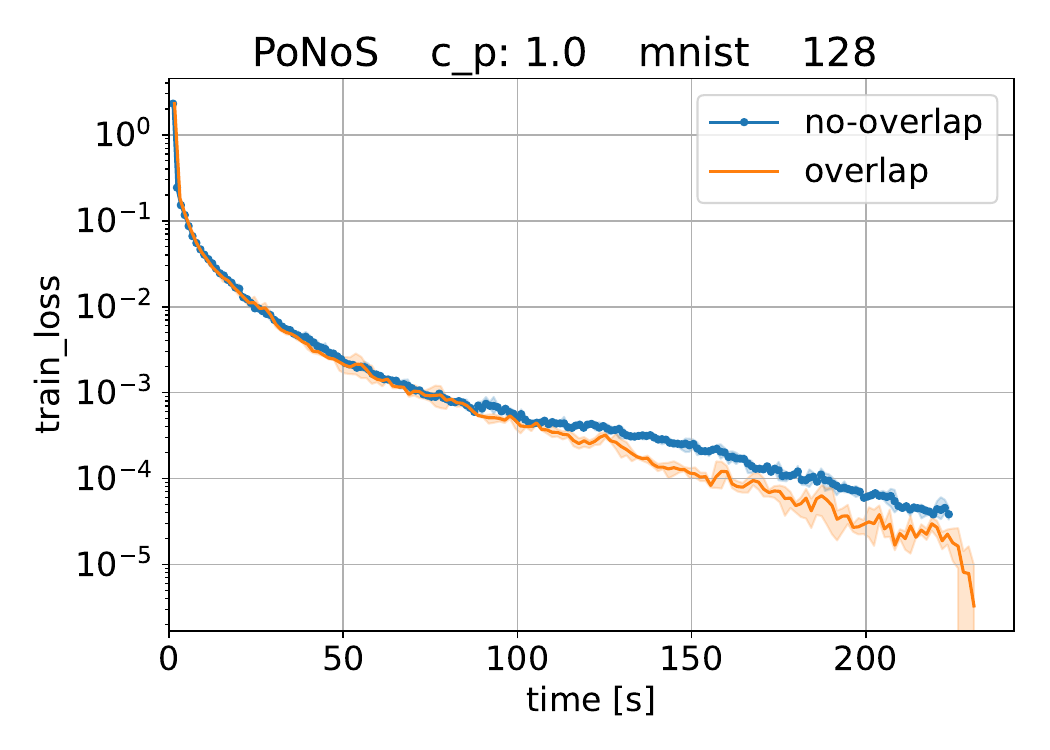}}
	\hfil\subfloat[MSL - bs: 128]{\includegraphics[width=.24\textwidth]{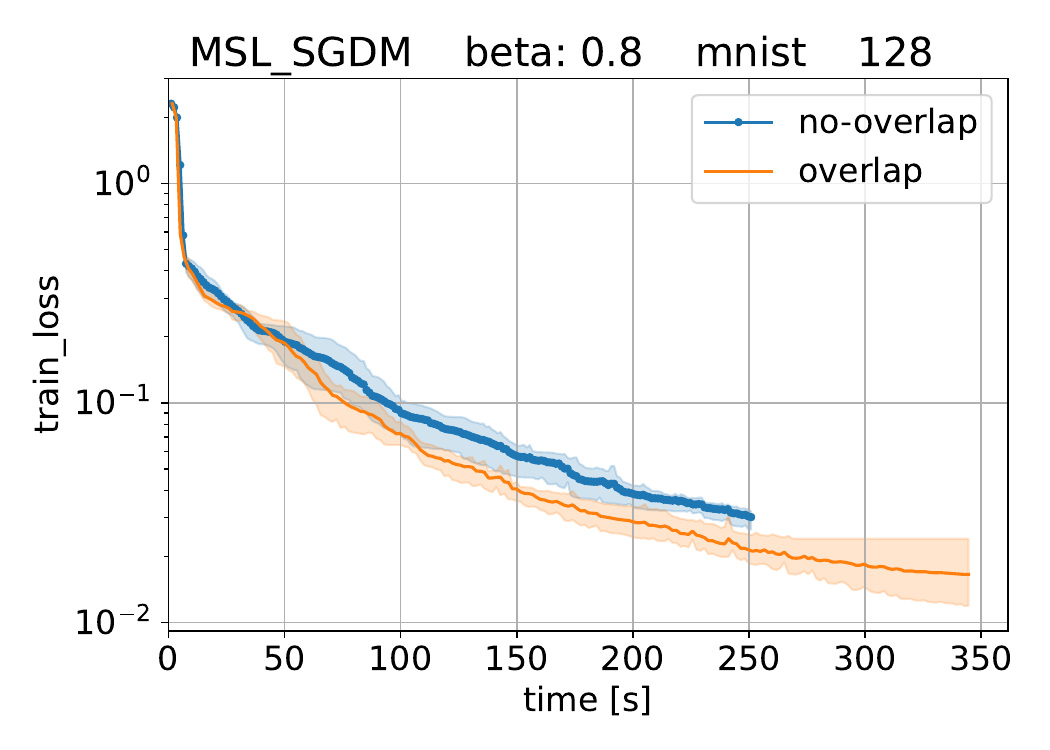}}
	\\
	\subfloat[SGD+M - bs: 512]{\includegraphics[width=.24\textwidth]{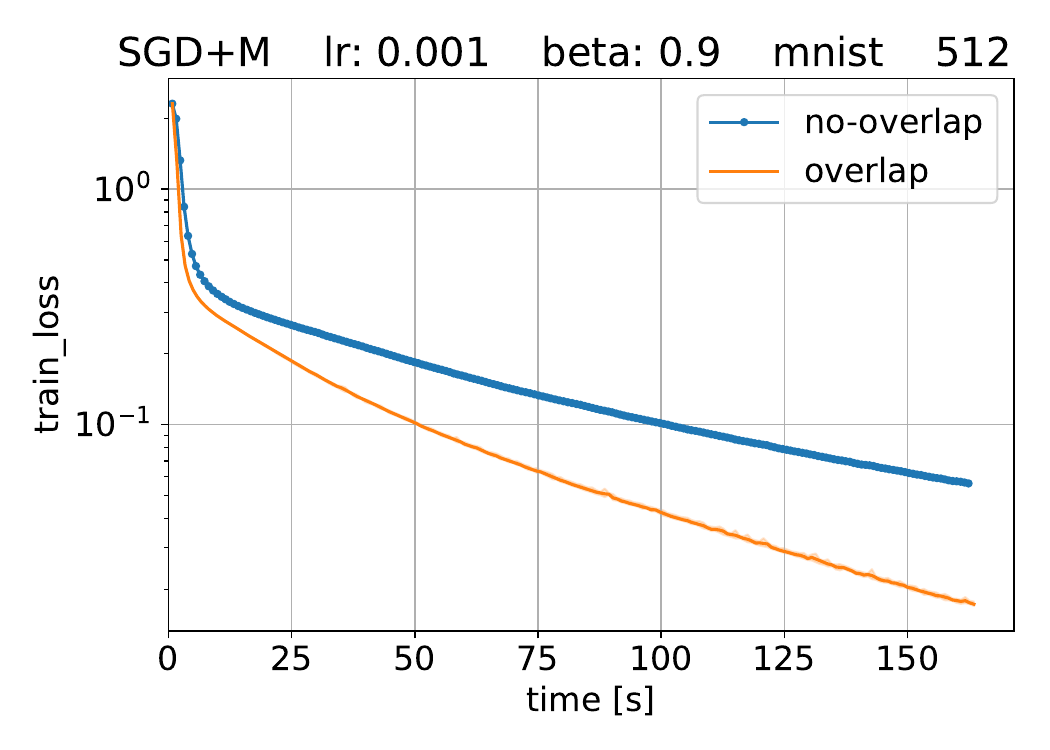}}
	\hfil
	\subfloat[Adam - bs: 512]{\includegraphics[width=.24\textwidth]{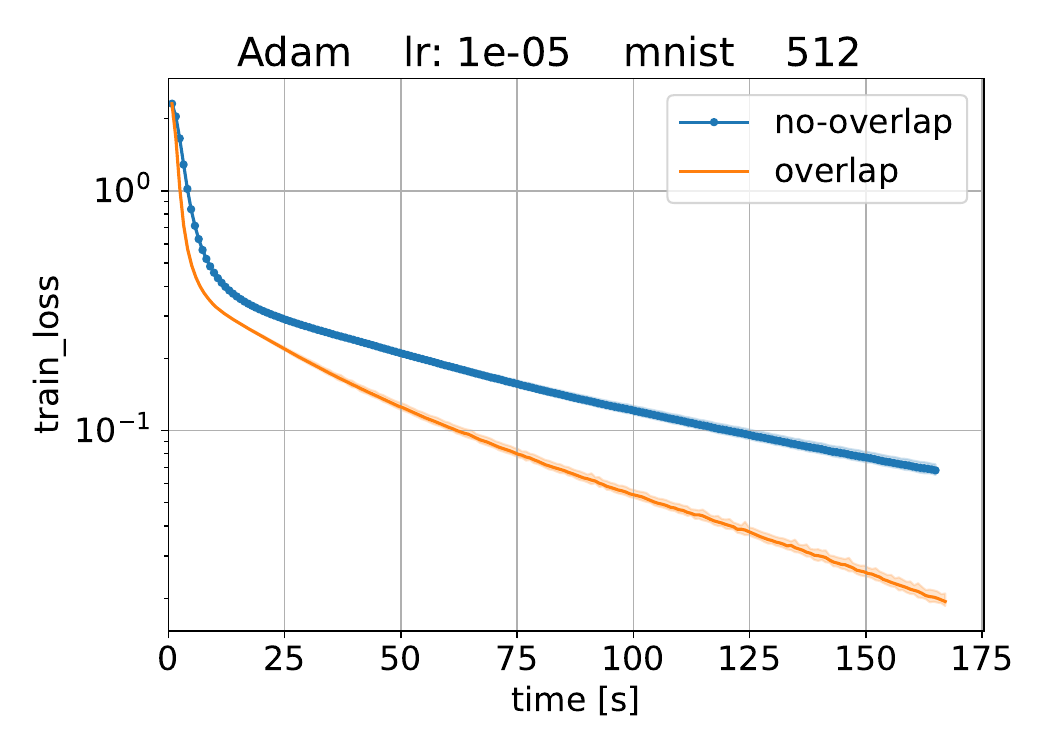}}
	\hfil\subfloat[PoNoS - bs: 512]{\includegraphics[width=.24\textwidth]{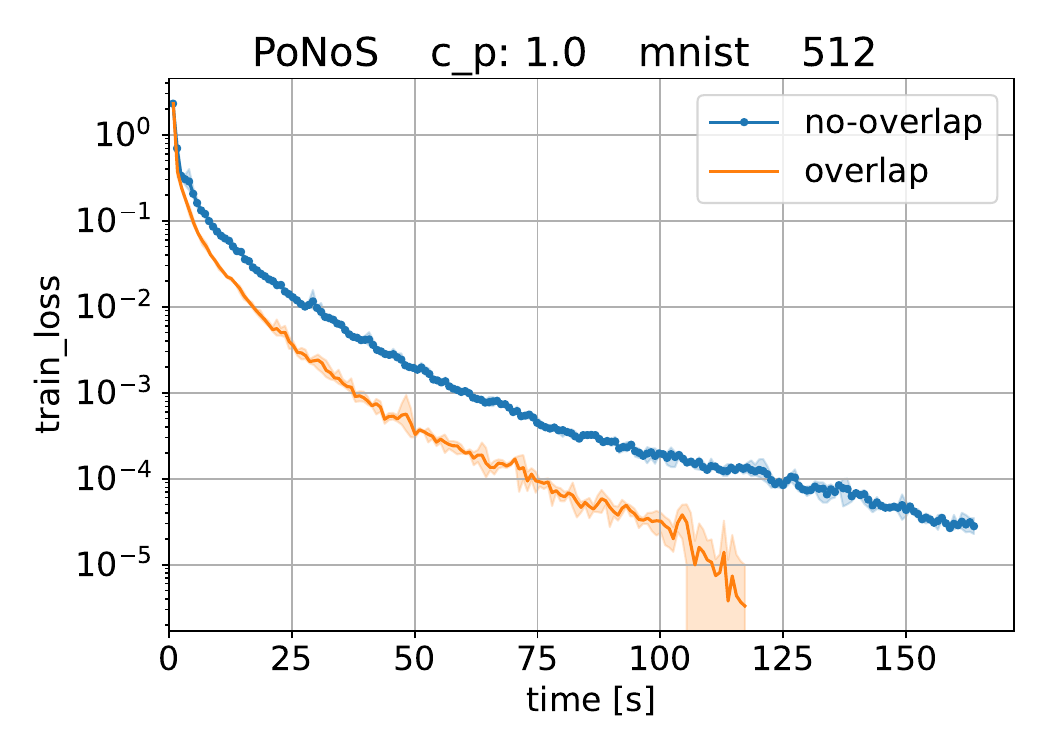}}
	\hfil\subfloat[MSL - bs: 512]{\includegraphics[width=.24\textwidth]{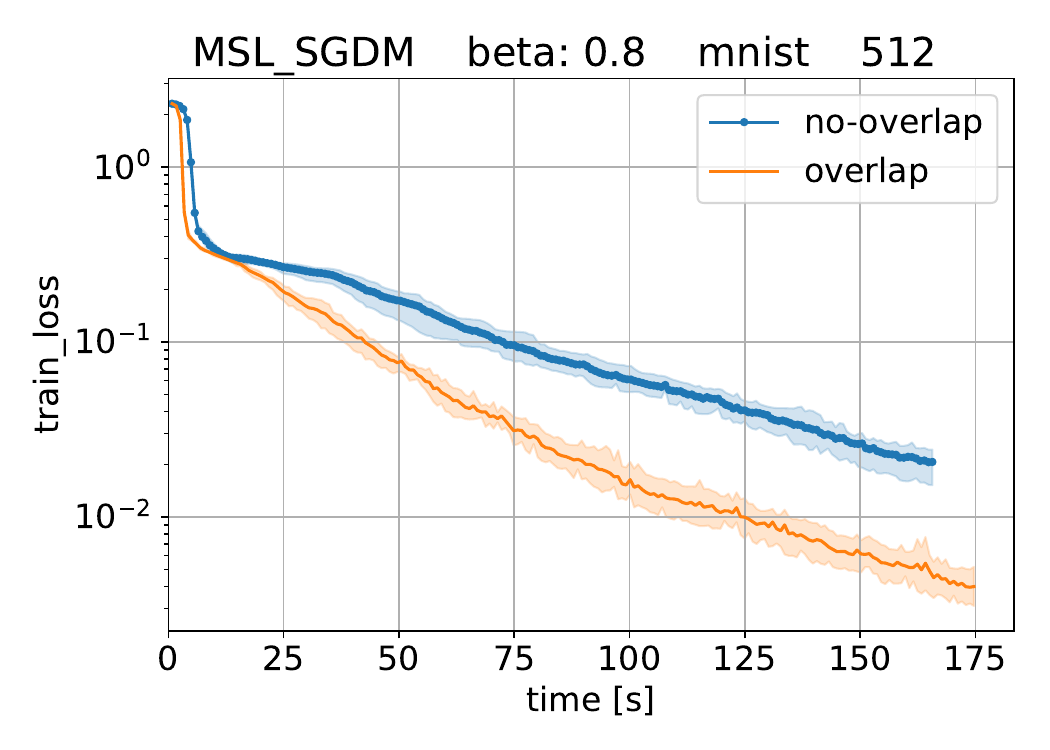}}
	\caption{Effect of a 50\% mini-batch persistency with state-of-the-art algorithms (Minibatch-GD with momentum, Adam, PoNoS, MSL-SGDM) applied to the problem of training a multi-layer perceptron on the \texttt{MNIST} dataset. Time is specified in seconds.}
	\label{fig:overlap_comparison}
\end{figure}

\section{Data-persistent Conjugate Gradient Rules}
\label{sec:cg}

The possibility of exploiting mini-batch persistency opens up a range of options to determine a suitable value for $\beta_k$ in heavy-ball, so as to 
\begin{enumerate}[(i)]
	\item obtain a descent direction for $f_{k}$ at $x^k$;
	\item get a large enough momentum term, not to shoot down its nice contribution for the overall optimization process.
\end{enumerate}

The particular strategy we propose in this paper stems from the strong connection existing between heavy-ball and nonlinear conjugate gradient (CG) methods. Indeed, in both classes of algorithms, the new search direction is a linear combination of the current gradients and the previous direction. 

\smallskip
The conceptual distinction between heavy-ball and CG is subtle and not unambiguously recognized by the optimization community. There is a substantial equivalence in the convex quadratic case, if heavy-ball parameters are chosen optimally \cite{polyak1987introduction}. In the general nonlinear case, CG methods define the search direction by means of a predetermined formula for $\beta_k$ and then carry out a Wolfe line search; on the other hand, heavy-ball selects $\alpha_k$ and $\beta_k$ altogether by some rule. If an Armijo-type line search is employed after choosing $\alpha_k$ and $\beta_k$ according to some rule, as done, e.g., in \cite{fan2023,lapucci2024globally},  we are arguably somewhere in a middle ground. We propose a framework that belongs to this middle ground in the stochastic setting.
\medskip
\begin{remark}
In the recent literature, two conjugate-gradient methods for finite-sum problem with convergence guarantees have been proposed \cite{jin2019stochastic, kou2023mini}. The two algorithms are substantially direct descendants of classical CG methods in nonlinear optimization, both possessing convergence properties in expectation. However, they present two major differences w.r.t.\ what we present here, which actually make their employment very unappealing in deep learning scenarios: they require a Wolfe-type line search, significantly increasing the per-iteration cost, and they employ variance reduction techniques, as the algorithm presented in \cite{jin2019stochastic} requires periodic full batch evaluations of the gradient, and the one presented in \cite{kou2023mini} stores per sample gradients, which have a massive impact on the memory footprint of the method.
\end{remark}
\medskip

In the general case, conjugate gradient methods perform a momentum update along a direction given by
$$d_k = -\nabla f(x^k)+\beta_k d_{k-1} = -\nabla f(x^k)+\frac{\beta_k}{\alpha_{k-1}}(x^k-x^{k-1}).$$
Several formulae have been proposed that can be used to compute $\beta_k$, that collapse to the same rule when $f$ is quadratic \cite{polyak1987introduction}. Given $y_k = \nabla f(x^k) - \nabla f(x^{k-1})$, some well-known formulations for computing $\beta_k$ (cfr.\ \cite[Sec.\ 12.5]{grippo2023introduction}) are:
\begin{equation}
	\label{eq:cg_rules}
	\begin{aligned}
		&\beta_k^{HS} = \frac{\nabla f(x^k)^T y_k}{(x^k - x^{k-1}) ^T y_k}, && \text{(Hestenes-Stiefel)} \\
		&\beta_k^{FR} = \frac{||\nabla f(x^k)||^2}{||\nabla f(x^{k-1})||^2}, && \text{(Fletcher-Reeves)} \\
		&\beta_k^{PPR} = \frac{\nabla f(x^k)^T y_k}{||\nabla f(x^{k-1})||^2}. && \text{(Polyak-Polak-Ribiére)}
	\end{aligned}
\end{equation}

We shall note that, with the above rules, we exploit the information available during iteration $k-1$ to define the coefficient $\beta_k$ for iteration $k$. Thus, in a stochastic scenario, rules of this kind would only be reasonable if $f_k$ did not change too much from an iteration to the other. This condition somewhat becomes true under mini-batch persistency.

Indeed, let ${B}_{k} \cap {B}_{k+1} = \mathcal{R}_{k} \neq \emptyset$ and $$f_{\mathcal{R}_k} = \frac{1}{|\mathcal{R}_k|}\sum_{i \in {\mathcal{R}_k}}f_i(x).$$ We can use the values of
$\nabla f_{\mathcal{R}_k} (x^{k})$ and $\nabla f_{\mathcal{R}_k} (x^{k+1})$, that are available during iteration $k$, to compute the value $\beta_{k+1}$ according to one of the above formulae. For instance, Fletcher-Reeves update becomes
$$\beta_{k+1}=\frac{\|\nabla f_{\mathcal{R}_k}(x^{k+1})\|^2}{\|\nabla f_{\mathcal{R}_k}(x^{k})\|^2}.$$

The momentum term at iteration $k+1$ will thus be suitably selected for the function $f_{\mathcal{R}_k}$, which actually constitutes a substantial portion of the function $f_{k+1}$ to be optimized at iteration $k+1$.

\smallskip

The advantage of this strategy is twofold: not only the estimated $\beta_k$ will be meaningful when the mini-batch changes, but we also do not need to make additional evaluations, since the four quantities involved in the CG-type formulae would be computed anyway throughout the algorithm. Note that using $\mathcal{R}_k$ to compute $\beta_{k+1}$ substantially differs from the strategy used in \cite{jin2019stochastic, kou2023mini}, as this allows to compute the CG update rule exactly for the function $f_{\mathcal{R}_k}$.

\section{The Algorithmic Framework}
\label{sec:alg}

It is clear that the strategy presented in Section \ref{sec:cg}, although reasonable, has a heuristic nature, since $\beta_k$ is estimated without looking at a portion (say a half) of currently considered data.
To obtain a sound algorithm, we must ensure that the obtained direction is actually of descent for $f_{k}$, before starting a line search procedure along the direction: if that were not the case, the backtracking algorithm could get stuck in an infinite loop.

Clearly, if $x^k-x^{k-1}$ is a direction of descent for $f_{k}$, i.e., $(x^k-x^{k-1})^T\nabla f_{k}(x^k)<0$, the descent property will hold for $d_k$ for any choice of $\beta_k\ge 0$.
From what we observed in the numerical results in Section \ref{sec:mb_pers}, this will often be the case in practice.
Yet, in the unfortunate case the direction $d$ obtained by the CG-type formula does not satisfy the above conditions, we shall resort to any of the following recovery strategies: 
\begin{itemize}
	\item switch to the negative stochastic gradient $-g_k(x^k) = -\nabla f_{k}(x^k)$ as a search direction;
	\item invert the search direction and take $d_k=-d$
	\item employ the damping strategy from \cite{fan2023};
	\item compute the direction according to the strategy proposed in \cite{lapucci2024globally}, i.e., 
	\begin{gather*}
		d_k = -ag_k(x^k)+b(x^k-x^{k-1}),\qquad (a,b)\in\argmin_{p\in\mathbb{R}^2} \frac{1}{2}p^TH_kp+p^T\begin{bmatrix}
			\|g_k(x^k)\|^2\\g_k(x^k)^T(x^k-x^{k-1})
		\end{bmatrix},
	\end{gather*}
	where the sequence of matrices $\{H_k\}\subseteq\mathbb{R}^{2\times 2}$ satisfies suitable uniform boundedness assumptions.
\end{itemize}
 
\medskip

\noindent The complete pseudocode of the resulting proposed method is reported in Algorithm \ref{alg::CG-noreshuffle}. 
A few aspects are worth to be remarked about the algorithm:
\begin{enumerate}
	\item In practical implementations, the algorithm will often be designed to work in a random-reshuffling fashion \cite{gurbuzbalaban2021random}. However, for the sake of simplicity and generality, we describe it in a ``pure'' stochastic setup. 
	\item At the first iteration, a pure SGD step is performed.
	\item The set $\mathcal{R}_k$ identifies the training samples that are reused in consecutive mini-batches, i.e. $B_k \cap B_{k+1} = \mathcal{R}_k$. In practice, at line \ref{step:R_k} of the algorithm, we can build $\mathcal{R}_k$ selecting from $B_k$ the indices that were drawn in the most recent iterations at step \ref{step:rand_draw}, so that during an epoch we avoid using the same training samples for many iterations in a row. For example, when using a $50\%$ overlap, we will have $\mathcal{R}_k = \mathcal{S}_k$ so as to use each training sample twice per epoch. As we highlight in Section \ref{sec:convergence}, using a deterministic rule at line \ref{step:R_k} is crucial to characterize the bias of the gradient estimate $g_k(x^k)$. 
	\item We do not specify the particular way of choosing the tentative step size $\alpha_0^k$ at line \ref{step:init_step}. There are various options that might work both theoretically and computationally. Of course, a fixed value could be used for all iterations; however, we might prefer to define it adaptively. For this latter approach, a heuristic method, such as the one proposed by Vaswani et al. \cite{vaswani2019}, could be considered, or we could use the generalized Stochastic Polyak step size introduced by Wang et al. \cite{wang2023}. 
	\item Finally, we shall underline that Algorithm \ref{alg::CG-noreshuffle} is shown to be using a monotone Armijo-type line search at step \ref{step:line_search}. This is done to ease the description of the method, but nothing prohibits to employ a nonmonotone line search.
\end{enumerate}

\begin{algorithm}[htbp]
	\caption{Mini-Batch Conjugate Gradient with Data Persistency  \texttt{(MBCG-DP)}}
	\label{alg::CG-noreshuffle}
	\begin{algorithmic}[1]
		\State{Input: $x^0 \in \mathbb{R}^n$, $f_1, \dots, f_N: \mathbb{R}^n \rightarrow \mathbb{R}$, $\gamma\in(0,1)$, $\delta\in(0,1)$.}
		\State Let $\mathcal{R}_{0}=\emptyset$, 
		\State $\beta_1=0$, $d_0 = 0$
		\For{$k=1,2,\ldots$}
		\State Randomly draw a subset $\mathcal{S}_k\subset\{1,\ldots,N\}\setminus\mathcal{R}_{k-1}$ \label{step:rand_draw}
		\State Set $B_k=\mathcal{R}_{k-1} \cup \mathcal{S}_{k}$
		\State Let $f_k = \frac{1}{|B_k|}\sum_{i \in {B_k}}
		f_{i}\;$ and $\;{g}_k = \frac{1}{|B_k|}\sum_{i \in {B_k}}\nabla
		f_{i}$
		\State Set $d = -{g}_k(x^k)+\beta_kd_{k-1}$
		\If{$ d^Tg_k(x^k)<0$ \label{step:if_sgr}}
		\State Set $d_k=d$
		\Else
		\State \label{step:sgr} Define $d_k$ such that $d_k^Tg_k(x^k)<0$
		\EndIf
		\State \label{step:init_step} Define a suitable initial step size $\alpha_0^k$
		\State Set $\alpha_k = \max_{j=0,1,\ldots}\{\alpha_0^k\delta^j\mid f_{k}(x_k+\alpha_0^k\delta^j d_k)\le f_{k}(x_k) +\gamma \alpha_0^k\delta^j g_k(x^k)^Td_k \}$ \label{step:line_search}
		\State Set $x_{k+1} = x_k+\alpha_k d_k$
		\State Select $\mathcal{R}_k \subset B_k$ according to a deterministic rule \label{step:R_k}
		\State  Define $\beta_{k+1}$ from $\nabla f_{\mathcal{R}_k}(x^k)$ and $\nabla f_{\mathcal{R}_k}(x^{k+1})$ according to any CG rule \eqref{eq:cg_rules}  \label{step:beta}
		\EndFor
	\end{algorithmic}
\end{algorithm}

\subsection{Convergence Analysis} \label{sec:convergence}
We now turn to the presentation of a formal convergence analysis for Algorithm \ref{alg::CG-noreshuffle}. The analysis tackles two main issues, that are addressed in two separate parts.

\subsubsection{Estimator Bias in Presence of Mini-Batch Persistency}
\label{sec:bias}
The key element of most convergence analyses for stochastic methods, including the one we carry out in the next subsection, relies on the availability of a conditionally unbiased estimator $g_k(x)$ of $\nabla f(x^k)$, i.e. $\mathbb{E}_k [g_k(x^k)] = \nabla f(x^k)$. We use $\mathbb{E}_k [\cdot]$ to denote the conditional expectation with respect to $x^k$, meaning $\mathbb{E}_k [\cdot] = \mathbb{E}[\cdot \mid x^k]$, while $\mathbb{E}[\cdot]$ indicates the total expectation.

However, in contrast to classical setups, where the unbiased estimator is reasonably obtained by drawing at each iteration the mini-batch to be used completely at random, we employ a data persistency strategy that turns this assumption false, as the set of indexes $\mathcal{R}_{k-1}$ is fixed, and thus deterministic, for the iteration $k$.

Let us denote $\bar{\mathcal{S}} = \{1,\ldots,N\}\setminus \mathcal{S}$. Recalling that $\nabla f_\mathcal{S}(x) = \frac{1}{|\mathcal{S}|}\sum_{i \in {\mathcal{S}}}\nabla f_i(x)$, and assuming for simplicity that $\mathcal{S}_k$ is randomly drawn from $\bar{\mathcal{R}}_{k-1}$ - so that $\mathbb{E}_k[\nabla f_{\mathcal{S}_k}(x^k)] = \nabla f_{\bar{\mathcal{R}}_{k-1}}(x^k)$ - we get that 
\begin{equation}
	\label{eq:biased_grad_calc}
	\begin{aligned}
	\mathbb{E}_k [g_k(x^k)] &= \mathbb{E}_k\left[\frac{1}{|B_k|}\sum_{i \in {B_k}}\nabla  f_i(x^k)\right]\\ 
	&= \mathbb{E}_k\left[\frac{1}{|B_k|}\left(\sum_{i \in {\mathcal{R}}_{k-1}}\nabla  f_i(x^k)+\sum_{i \in {\mathcal{S}}_{k}}\nabla  f_i(x^k)\right)\right]\\
	&= \frac{1}{|B_k|}\sum_{i \in {\mathcal{R}}_{k-1}}\nabla  f_i(x^k)+ \mathbb{E}_k\left[\frac{1}{|B_k|}\sum_{i \in {\mathcal{S}}_{k}}\nabla  f_i(x^k)\right]\\
	&= \frac{|\mathcal{R}_{k-1}|}{|B_k|}\nabla f_{\mathcal{R}_{k-1}}(x^k) + \frac{|\mathcal{S}_k|}{|B_k|}\mathbb{E}_k\left[\nabla  f_{\mathcal{S}_k}(x^k)\right]\\
	&= \frac{|\mathcal{R}_{k-1}|}{|B_k|}\nabla f_{\mathcal{R}_{k-1}}(x^k) + \frac{|\mathcal{S}_k|}{|B_k|}\nabla  f_{\bar{\mathcal{R}}_{k-1}}(x^k).
	\end{aligned}
\end{equation}
If, for instance, we assume $|\mathcal{R}_{k-1}| = |\mathcal{S}_{k}| = |B_k|/2$, we get 
$$\mathbb{E}_k [g_k(x^k)]=\frac{1}{2}\nabla f_{\mathcal{R}_{k-1}}(x^k) + \frac{1}{2}\nabla  f_{\bar{\mathcal{R}}_{k-1}}(x^k),$$
which is a gradient estimate that is clearly biased towards the ``persistent'' samples. When taking the expected value for the gradient estimator, the weight assigned to the terms $\nabla f_i(x^k)$ for $i\in \mathcal{R}_{k-1}$ is $\frac{1}{2|\mathcal{R}_{k-1}|}$, evidently higher than it would be in the common unbiased stochastic gradient, i.e., $\frac{1}{N}$.

Fortunately, we can recover unbiasedness of the estimator applying a suitable correction when constructing the approximation $g_k(x^k)$ - and similarly $f_k(x^k)$. This result is formalized hereafter.

\begin{proposition}
	\label{prop:unbiased}
	Let $\mathcal{R}_{k-1}\subset\{1,\ldots,N\}$ and let $\mathcal{S}_k$ be a random subset of $\bar{\mathcal{R}}_{k-1} = \{1,\ldots,N\}\setminus\mathcal{R}_{k-1}$. Let $\zeta_k=\frac{N-|\mathcal{R}_{k-1}|}{|\mathcal{S}_{k}|}$. Then, the quantities 
	$$
	f_k(x^k) = \frac{1}{N} \left(\sum_{i \in {\mathcal{R}}_{k-1}} f_i (x^k) + \zeta_k \sum_{i \in {\mathcal{S}_k}} f_i (x^k) \right)$$
	and
	$$g_k(x^k) 
		= \frac{1}{N} \left(\sum_{i \in {\mathcal{R}}_{k-1}} \nabla f_i (x^k) + \zeta_k \sum_{i \in {\mathcal{S}_k}} \nabla f_i (x^k) \right)$$ 
	are conditionally unbiased estimators of $f(x^k)$ and $\nabla f(x^k)$ given $x^k$, i.e., $\mathbb{E}_k[f_k(x^k)] = f(x^k)$ and $\mathbb{E}_k[g_k(x^k)] = \nabla f(x^k)$. 
\end{proposition}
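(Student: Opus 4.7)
The plan is to exploit linearity of (conditional) expectation together with the elementary combinatorial fact that, in a uniform random draw of a fixed-size subset, each element is selected with a known, common probability. Given $x^k$, the set $\mathcal{R}_{k-1}$ is deterministic, so the only randomness left in $f_k(x^k)$ and $g_k(x^k)$ lives in $\mathcal{S}_k\subset\bar{\mathcal{R}}_{k-1}$. Thus the sum $\sum_{i\in\mathcal{R}_{k-1}}\nabla f_i(x^k)$ passes through $\mathbb{E}_k[\cdot]$ unchanged, and the entire argument reduces to computing $\mathbb{E}_k\bigl[\sum_{i\in\mathcal{S}_k}\nabla f_i(x^k)\bigr]$.

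For the latter, I would write $\sum_{i\in\mathcal{S}_k}\nabla f_i(x^k)=\sum_{i\in\bar{\mathcal{R}}_{k-1}}\mathbf{1}_{\{i\in\mathcal{S}_k\}}\nabla f_i(x^k)$ and use linearity of expectation together with $\mathbb{E}_k[\mathbf{1}_{\{i\in\mathcal{S}_k\}}]=\Pr(i\in\mathcal{S}_k\mid x^k)$. Since $\mathcal{S}_k$ is a uniformly chosen subset of $\bar{\mathcal{R}}_{k-1}$ of the prescribed size $|\mathcal{S}_k|$, this probability is the same for every $i\in\bar{\mathcal{R}}_{k-1}$ and equals $|\mathcal{S}_k|/|\bar{\mathcal{R}}_{k-1}|=|\mathcal{S}_k|/(N-|\mathcal{R}_{k-1}|)=1/\zeta_k$. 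Hence
\[
\mathbb{E}_k\!\left[\zeta_k\sum_{i\in\mathcal{S}_k}\nabla f_i(x^k)\right]=\zeta_k\cdot\frac{1}{\zeta_k}\sum_{i\in\bar{\mathcal{R}}_{k-1}}\nabla f_i(x^k)=\sum_{i\in\bar{\mathcal{R}}_{k-1}}\nabla f_i(x^k).
\]

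Plugging this into the definition of $g_k(x^k)$ and combining with the deterministic contribution from $\mathcal{R}_{k-1}$ gives
\[
\mathbb{E}_k[g_k(x^k)]=\frac{1}{N}\!\left(\sum_{i\in\mathcal{R}_{k-1}}\nabla f_i(x^k)+\sum_{i\in\bar{\mathcal{R}}_{k-1}}\nabla f_i(x^k)\right)=\frac{1}{N}\sum_{i=1}^{N}\nabla f_i(x^k)=\nabla f(x^k),
\]
since $\mathcal{R}_{k-1}\cup\bar{\mathcal{R}}_{k-1}=\{1,\dots,N\}$ is a disjoint union. The identical argument, replacing each $\nabla f_i$ by $f_i$, establishes $\mathbb{E}_k[f_k(x^k)]=f(x^k)$.

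There is no genuinely hard step here; the whole point of introducing the correction factor $\zeta_k$ is precisely to cancel the inclusion probability of $\mathcal{S}_k$ and restore uniform weighting across the complement of $\mathcal{R}_{k-1}$. The only subtlety worth remarking on is the implicit assumption on the sampling distribution of $\mathcal{S}_k$ (uniform over fixed-size subsets of $\bar{\mathcal{R}}_{k-1}$), which ensures that the inclusion probability is both well-defined and identical for every $i\in\bar{\mathcal{R}}_{k-1}$; if sampling were done with replacement or with non-uniform weights, $\zeta_k$ would need to be adjusted accordingly. I would state this assumption explicitly at the beginning of the proof to make the counting argument airtight.
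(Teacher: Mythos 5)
Your proof is correct and follows essentially the same route as the paper: isolate the deterministic contribution of $\mathcal{R}_{k-1}$, show that the $\zeta_k$-weighted sum over $\mathcal{S}_k$ has conditional expectation equal to the full sum over $\bar{\mathcal{R}}_{k-1}$, and recombine. The only difference is that you explicitly derive the key identity $\mathbb{E}_k[\nabla f_{\mathcal{S}_k}(x^k)] = \nabla f_{\bar{\mathcal{R}}_{k-1}}(x^k)$ via indicator variables and uniform inclusion probabilities, whereas the paper takes it as a stated assumption on the sampling of $\mathcal{S}_k$; your remark that uniformity of the draw is what makes $\zeta_k$ the right correction factor is accurate and consistent with the paper's setup.
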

\begin{proof}
	We only prove the result for $g_k$, as the proof for $f_k$ is substantially identical. Taking the conditional expected value of $g_k(x^k)$ given $x^k$, recalling that $ \frac{1}{|\mathcal{S}|}\sum_{i \in {\mathcal{S}}}\nabla f_i(x) = \nabla f_\mathcal{S}(x)$, that $\mathbb{E}_k[\nabla f_{\mathcal{S}_k}(x^k)] = \nabla f_{\bar{\mathcal{R}}_{k-1}}(x^k)$ and \eqref{eq:biased_grad_calc}, we get
	\begin{align*}
		\mathbb{E}_k [g_k(x^k)] &= \mathbb{E}_k\left[\frac{1}{N}\left(\sum_{i \in {\mathcal{R}}_{k-1}}\nabla  f_i(x^k)+\zeta_k\sum_{i \in {\mathcal{S}}_{k}}\nabla  f_i(x^k)\right)\right]\\
		&= \frac{|\mathcal{R}_{k-1}|}{N}\nabla f_{\mathcal{R}_{k-1}}(x^k) + \zeta_k\frac{|\mathcal{S}_k|}{N}\nabla f_{\bar{\mathcal{R}}_{k-1}}(x^k)\\
		&=\frac{1}{N}\left(\sum_{i \in {\mathcal{R}}_{k-1}}\nabla  f_i(x^k)+\frac{N-|\mathcal{R}_{k-1}|}{|\mathcal{S}_{k}|}\frac{|\mathcal{S}_{k}|}{N-|\mathcal{R}_{k-1}|}\sum_{i \in {\bar{\mathcal{R}}}_{k-1}}\nabla  f_i(x^k)\right)\\
		&=\frac{1}{N}\sum_{i =1}^{N}\nabla  f_i(x^k)  = \nabla f(x^k),
	\end{align*}
	thus getting the thesis.
\end{proof}

\smallskip
\begin{remark}
	The correction to make the gradient estimate unbiased even in presence of mini-batch persistency allows to soundly employ the latter technique from a theoretical convergence standpoint. However, we note that the correction heavily alleviates the sought effects that drove the introduction of the overlap strategy itself in the first place, since the weight assigned to the ``persistent'' samples is drastically lowered. This is also evident in the computational experiments, as shown later in Figure \ref{fig:biased_mnist}. Therefore, we believe the theoretical analysis of convergence with a biased estimator would also be of great interest. Yet, this issue is a large open problem and its treatment would go far beyond the scope of this work; we thus leave it to future studies.
\end{remark}

\subsubsection{Properties of the Search Direction}
\label{sec:direction}
Together with the assumption on gradient estimate unbiasedness, largely discussed in the previous subsection, we need to state another important assumption for properly analyzing the algorithm: the search directions $d_k$ shall in fact be somehow tied to the true gradient $\nabla f(x^k)$. As pointed out in well-established literature (see, e.g., \cite[sec.\ 4.1]{bottou2018}), the sequence of search directions $\{d_k\}$ shall be such that, for all $k$,
$$\mathbb{E}_k[\|d_k\|]\le C_1\|\nabla f(x^k)\|,\quad \text{ and }\quad\mathbb{E}_k[d_k]^T\nabla f(x^k)\le - C_2 \color{black} \|\nabla f(x^k)\|^2.$$ 
Following the discussion in \cite{lapucci2024convergenceconditionsstochasticline}, this can be ensured if, given constants $c_1$, $c_2$ and $c_3$, the entire sequence $\{d_k\}$ satisfies the following set of conditions:
\begin{gather}
	\|d_k\|\le c_1\|g_k(x^k)\|,\label{eq:sgr1}\\
	d_k^T g_k(x^k)\le -c_2\|g_k(x^k)\|^2,\label{eq:sgr2}\\
	\operatorname{Var}_k(d^k)\le c_3\operatorname{Var}_k(g_k(x^k)),\label{eq:sgr3}
\end{gather}
where by $\operatorname{Var}_k$ we denote the conditional variance of a vector, i.e., $\operatorname{Var}_k(v) = \mathbb{E}_k[\|v\|^2]-\|\mathbb{E}_k[v]\|^2$.

Properties \eqref{eq:sgr1}-\eqref{eq:sgr2} can be checked, as safeguards, at each iteration for fixed values of $c_1$ and $c_2$: when not satisfied by the CG type direction, the algorithm can switch to the stochastic gradient direction. 

In order to ensure that condition \eqref{eq:sgr3} is also satisfied by the direction resulting from the safeguard condition, we need to keep parameter $\beta_k$ bounded; this is also not particularly restrictive, as it can be enforced by a clipping operation at the update time; in other words, we can simply set  $\beta_{k+1} = \min\{\hat{\beta}_{k+1},\bar{\beta}\}$, where $\hat{\beta}_{k+1}$ is the value resulting from the employed CG update rule. 
With this precaution, we only have to rely on an assumption on the variance of $\beta_{k+1}$, which appears reasonable as argued in \cite[Remark 2]{lapucci2024convergenceconditionsstochasticline}.

The satisfaction of condition \eqref{eq:sgr3} for the whole sequence, if clipping and safeguards are suitably applied, is formally stated and proved in the next proposition.

\begin{proposition} \label{prop:var_d_k}
	Let, for all $k$, $d_k = -g_k(x^k) + \beta_k d_{k-1}$, where 
	$$\beta_k = 
	\begin{cases}
		\min\{\hat{\beta}_{k},\bar{\beta}\} & \text{if } \|p_k\|\le c_1\|g_k(x^k)\| \text{ and } p_k^T g_k(x^k)\le -c_2\|g_k(x^k)\|^2, \\
		0 & \text{otherwise},
	\end{cases}$$
	with $p_k = -g_k(x^k) + \min\{\hat{\beta}_{k},\bar{\beta}\} d_{k-1}$. Assuming that, for all $k$, $\|d_{k-1}\| \leq M $ and $\operatorname{Var}_k (\beta_k) \leq \hat{c}_3 \operatorname{Var}_k (g_k(x^k))$ for some constants $M, \hat{c}_3 > 0$, we have that $\operatorname{Var}_k(d^k)\le c_3\operatorname{Var}_k(g_k(x^k))$, with $c_3=\left(1 + M^2 \hat{c}_3 + 2M \sqrt{\hat{c}_3}\right)$. 
\end{proposition}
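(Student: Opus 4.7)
The plan is to reduce the bound on $\operatorname{Var}_k(d_k)$ to the standard variance-decomposition of a sum of random vectors, exploiting the fact that $d_{k-1}$ is non-random at iteration $k$.

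First, I would observe that $d_{k-1}$ is measurable with respect to the information available at the start of iteration $k$ and therefore can be treated as a constant under the conditional operator $\operatorname{Var}_k$. Consequently, the only randomness in the product $\beta_k d_{k-1}$ comes from the scalar $\beta_k$, and an elementary computation gives
$$
\operatorname{Var}_k(\beta_k d_{k-1}) = \|d_{k-1}\|^2\,\operatorname{Var}_k(\beta_k) \le M^2\hat{c}_3\,\operatorname{Var}_k(g_k(x^k)),
$$
where the inequality applies the two standing hypotheses $\|d_{k-1}\|\le M$ and $\operatorname{Var}_k(\beta_k)\le\hat{c}_3\operatorname{Var}_k(g_k(x^k))$ in sequence.

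Next, I would expand the variance of the sum $d_k = -g_k(x^k)+\beta_k d_{k-1}$ as
$$
\operatorname{Var}_k(d_k) = \operatorname{Var}_k(g_k(x^k)) + \operatorname{Var}_k(\beta_k d_{k-1}) + 2\operatorname{Cov}_k\!\bigl(-g_k(x^k),\,\beta_k d_{k-1}\bigr),
$$
where $\operatorname{Cov}_k(X,Y) := \mathbb{E}_k[X^T Y]-\mathbb{E}_k[X]^T\mathbb{E}_k[Y]$ is the natural vector-valued conditional covariance compatible with the paper's definition of $\operatorname{Var}_k$. Rewriting $\operatorname{Cov}_k(X,Y)=\mathbb{E}_k[(X-\mathbb{E}_k X)^T(Y-\mathbb{E}_k Y)]$ and applying the Cauchy--Schwarz inequality in $L^2$ yields $|\operatorname{Cov}_k(X,Y)|\le\sqrt{\operatorname{Var}_k(X)\operatorname{Var}_k(Y)}$. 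Inserting the bound established in the previous step gives
$$
\bigl|\operatorname{Cov}_k(-g_k(x^k),\,\beta_k d_{k-1})\bigr| \le M\sqrt{\hat{c}_3}\,\operatorname{Var}_k(g_k(x^k)).
$$

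Summing the three contributions then produces
$$
\operatorname{Var}_k(d_k) \le \bigl(1+M^2\hat{c}_3+2M\sqrt{\hat{c}_3}\bigr)\operatorname{Var}_k(g_k(x^k)) = c_3\,\operatorname{Var}_k(g_k(x^k)),
$$
which is the asserted inequality. No case analysis on the two branches in the definition of $\beta_k$ is needed, because the hypothesis on $\operatorname{Var}_k(\beta_k)$ is postulated on the actually-used value. I expect the only subtle point to be a clean justification that $d_{k-1}$ is non-random under $\operatorname{Var}_k$, which amounts to identifying the correct conditioning $\sigma$-algebra (a detail the paper sidesteps by writing $\mathbb{E}_k[\cdot]=\mathbb{E}[\cdot\mid x^k]$ but intrinsically uses the whole past history for).
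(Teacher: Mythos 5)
Your proof is correct and takes essentially the same route as the paper's: the same decomposition $\operatorname{Var}_k(d_k)=\operatorname{Var}_k(g_k(x^k))+\|d_{k-1}\|^2\operatorname{Var}_k(\beta_k)\pm 2\operatorname{Cov}_k(\cdot,\cdot)$ exploiting that $d_{k-1}$ is deterministic under the conditioning, followed by a Cauchy--Schwarz bound $|\operatorname{Cov}_k|\le\sqrt{\operatorname{Var}_k(\beta_k)\operatorname{Var}_k(g_k(x^k))}\,\|d_{k-1}\|$ and the two standing hypotheses. The only (immaterial) difference is that you apply the vector $L^2$ Cauchy--Schwarz inequality to the covariance in one step, whereas the paper obtains the same bound component-wise via scalar covariances and a second Cauchy--Schwarz on the resulting sum.
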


\begin{proof}
	The variance of the direction $d_k$ is given by
	\begin{align*}
		\operatorname{Var}_k(-g_k(x^k) + \beta_k d_{k-1}) = \operatorname{Var}_k(g_k(x^k)) + \|d_{k-1}\|^2 \operatorname{Var}_k (\beta_k) - 2 \operatorname{Cov}_k(g_k(x^k), \beta_k d_{k-1}) 
	\end{align*}
	Exploiting the Cauchy-Schwartz inequality on the dot product $u^T v$ where $u_i = \sqrt{\operatorname{Var}_k((g_k(x^k))_i)}$ and $v_i=|(d_{k-1})_i|$, we can write 
	\begin{align*}
		|\operatorname{Cov}_k(g_k(x^k),\beta_k d_{k-1})| &\le \sum_{i=1}^{n} \left|\operatorname{Cov}_k((g_k(x^k))_i,\beta_k (d_{k-1})_i)\right|\\
		&\le \sum_{i=1}^{n} \sqrt{\operatorname{Var}_k((g_k(x^k))_i)}\sqrt{\operatorname{Var}_k(\beta_k (d_{k-1})_i)}\\
		& = \sqrt{\operatorname{Var}_k(\beta_k)}\sum_{i=1}^{n} \sqrt{\operatorname{Var}_k((g_k(x^k))_i)} |(d_{k-1})_i|\\
		&\le \sqrt{\operatorname{Var}_k(\beta_k)}\sqrt{\operatorname{Var}_k(g_k(x^k))}\|d_{k-1}\|.
	\end{align*}
	Using that $\|d_{k-1}\| \leq M$ and that $\operatorname{Var}_k (\beta_k) \leq \hat{c}_3 \operatorname{Var}_k (g_k(x^k))$ we obtain
	\begin{align*}
		\operatorname{Var}_k(d_k) &\le \operatorname{Var}_k(g_k(x^k)) + \|d_{k-1}\|^2 \operatorname{Var}_k (\beta_k) + 2 \sqrt{\operatorname{Var}_k(\beta_k)}\sqrt{\operatorname{Var}_k(g_k(x^k))}\|d_{k-1}\| \\
		& \leq \left(1 + M^2 \hat{c}_3 + 2M \sqrt{\hat{c}_3}\right) \operatorname{Var}_k(g_k(x^k)),
	\end{align*}
	which concludes the proof.
\end{proof}

Proposition \ref{prop:var_d_k} provides the final building block for establishing convergence results for the proposed algorithm. Note that the first assumption on $\|d_{k-1}\|$ is not restrictive, as it can be enforced at iteration $k$ by simple clipping of the then deterministic direction $d_{k-1}$ (i.e., using $\min\{1,\frac{M}{\|d_{k-1}\|}\}d_{k-1}$ instead of $d_{k-1}$). 

\subsubsection{Convergence Results}
\label{sec:main_theorem}
We are now ready to present the asymptotic convergence analysis for Algorithm \ref{alg::CG-noreshuffle}. In particular, we are showing how the method proposed in this paper possesses analogous properties as the other well-known stochastic line-search based methods from the literature \cite{vaswani2019fast, vaswani2019, loizou2021, galli2023, lapucci2024convergenceconditionsstochasticline}. Specifically, it is possible to guarantee fast convergence of the algorithmic framework under interpolation and PL assumptions \cite{ma2018, mishkin2020interpolation, polyak1987introduction}.

We recall that the minimizer interpolation property \cite[Def.\ 3]{mishkin2020interpolation} states that $$x^* \in \argmin_{x^* \in \mathbb{R}^n} f(x) \implies x^* \in \argmin_{x^* \in \mathbb{R}^n} f_i(x) \text{ for all } i \in \{1, \dots, N\}.$$
On the other hand, we say that a function $f$ satisfies the PL condition if $$\exists \mu > 0 \text{ such that } 2 \mu (f(x) - f^*) \leq ||\nabla f(x)||^2 \text{ for all } x \in \mathbb{R}^n.$$
We shall also recall that (see, e.g., \cite[Sec.\ 3]{lapucci2024convergenceconditionsstochasticline}), $L$-smoothness, minimizers interpolation and the PL condition imply the \textit{Strong Growth Condition} to hold for some $\rho>0$, i.e.,
$$\mathbb{E}_k [\|\nabla f_k (x)\|^2] \leq \rho \|\nabla f(x)\|^2 \text{ for all } x \in \mathbb{R}^n \text{ and for any } k.$$

Since we need to insert into the algorithmic framework the fixes described in sections \ref{sec:bias} and \ref{sec:direction}, we report for the sake of clarity the instructions of the provably convergent version of the algorithm in Algorithm \ref{alg::conv}. Note that we also made explicit the constant lower bound required for the initial step size $\alpha_k^0\ge \bar{\alpha}$ of the line search procedure.

\begin{algorithm}[htbp]
	\caption{Convergent \texttt{MBCG-DP}} 
	\label{alg::conv}
	\begin{algorithmic}[1]
		\State{Input: $x^0 \in \mathbb{R}^n$, $f_1, \dots, f_N: \mathbb{R}^n \rightarrow \mathbb{R}$, $\gamma\in(0,1)$, $\delta\in(0,1)$, $c_1,c_2,\bar{\alpha},\bar{\beta},M>0$.}
		\State Let $\mathcal{R}_{0}=\emptyset$, 
		\State $\beta_1=0$, $d_0 = 0$
		\For{$k=1,2,\ldots$}
		\State Randomly draw a subset $\mathcal{S}_k\subset\{1,\ldots,N\}\setminus\mathcal{R}_{k-1}$ 
		\State Set $B_k=\mathcal{R}_{k-1}\cup \mathcal{S}_{k}$
		\State Set $\zeta_k =\frac{N-|\mathcal{R}_{k-1}|}{|\mathcal{S}_{k}|} $
		\State Let $f_k =  \frac{1}{N} \left(\sum_{i \in {\mathcal{R}}_{k-1}} f_i + \zeta_k \sum_{i \in {\mathcal{S}_k}} f_i \right)$ 
		\State Let $g_k=\frac{1}{N} \left(\sum_{i \in {\mathcal{R}}_{k-1}} \nabla f_i + \zeta_k \sum_{i \in {\mathcal{S}_k}} \nabla f_i \right)$
		\State Set $d = -{g}_k(x^k)+\beta_k \min \left\{1, \frac{M}{\|d_{k-1}\|}\right\} d_{k-1} $ 
		\If{$\|d\|\le c_1\|g_k(x^k)\|$ \textbf{and} $d^Tg_k(x^k)\leq-c_2\|g_k(x^k)\|^2$ \label{step:safe}}
		\State Set $d_k=d$
		\Else
		\State  Set $d_k=-g_k(x^k)$ \label{step:end_safe}
		\EndIf
		\State  Define an initial step size $\alpha_0^k\ge \bar{\alpha}$
		\State Set $\alpha_k = \max_{j=0,1,\ldots}\{\alpha_0^k\delta^j\mid f_{k}(x_k+\alpha_0^k\delta^j d_k)\le f_{k}(x_k) +\gamma \alpha_0^k\delta^j g_k(x_k)^Td_k \}$ 
		\State Set $x_{k+1} = x_k+\alpha_k d_k$
		\State Select $\mathcal{R}_k \subset B_k$ according to a deterministic rule
		\State  Define $\hat{\beta}_{k+1}$ from $\nabla f_{\mathcal{R}_k}(x^k)$ and $\nabla f_{\mathcal{R}_k}(x^{k+1})$ according to any CG rule \eqref{eq:cg_rules} 
		\State Set $\beta_{k+1} = \min\{\hat{\beta}_{k+1},\bar{\beta}\}$
		\EndFor
	\end{algorithmic}
\end{algorithm}

The main convergence theorem is stated hereafter. The result is recovered by \cite[{Th.\ 5.3}]{lapucci2024convergenceconditionsstochasticline}.
\begin{theorem}
	Assume $f:\mathbb{R}^n\to\mathbb{R}$ is an $L$-smooth function, satisfying the PL condition with constant $\mu$ and the minimizer interpolation property, and that the randomly drawn functions $f_k$ considered at any iteration $k$ are $L_k$-smooth functions, with $L\le L_\text{max} = \max_{k}L_k$. 
	Let the sequence $\{\beta_k\}$ be such that $\operatorname{Var}_k (\beta_k) \leq \hat{c}_3 \operatorname{Var}_k (g_k(x^k))$ and that $c_2>c_3\left(1-\frac{1}{\rho}\right)$, being $c_3 = \left(1 + M^2 \hat{c}_3 + 2M \sqrt{\hat{c}_3}\right)$ and $\rho$ the SGC constant.
	Then, if $\{x^k\}$ is the sequence produced by Algorithm \ref{alg::conv} the following property holds:
	\begin{equation}
		\label{eq:main_thesis}
		\mathbb{E}[f(x^{k+1}) - f(x^*)] \leq (\eta\bar{\alpha})^k(f(x^0)-f(x^*)),
	\end{equation}
	where $\eta= \left(\frac{L_{\text{max}} c_1^2}{2 c_2}\left(\frac{1}{\gamma}+\frac{1}{\delta(1-\gamma)}\right) - 2 \bigg(c_2-c_3\bigg(1-\frac{1}{\rho}\bigg)\bigg) \mu   \right)$.
	
	\noindent Consequently, when all the constants involved are such that
	$0<\eta<\frac{1}{\bar{\alpha}}$, the convergence rate is linear with an $\mathcal{O}(\log(\frac{1}{\epsilon}))$ iteration, stochastic function evaluations and stochastic gradient evaluations complexity to obtain an $\epsilon$-accurate solution in expectation.
\end{theorem}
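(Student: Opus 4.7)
The plan is to recognize that this result is essentially a bookkeeping exercise: the real convergence machinery lives in \cite[Th.\ 5.3]{lapucci2024convergenceconditionsstochasticline}, and the task reduces to verifying that Algorithm \ref{alg::conv} meets the structural hypotheses of that cited theorem. Those hypotheses are (i) conditional unbiasedness of the stochastic gradient estimator, (ii) three search-direction conditions \eqref{eq:sgr1}-\eqref{eq:sgr3}, and (iii) an Armijo line search with initial step size uniformly bounded below, all combined with $L$-smoothness and SGC, the latter of which follows automatically from $L$-smoothness, PL, and minimizer interpolation (as recalled just before the theorem).

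First I would invoke Proposition \ref{prop:unbiased}: the rescaling by $\zeta_k = (N-|\mathcal{R}_{k-1}|)/|\mathcal{S}_k|$ in the definition of $f_k$ and $g_k$ inside Algorithm \ref{alg::conv} exactly restores $\mathbb{E}_k[f_k(x^k)] = f(x^k)$ and $\mathbb{E}_k[g_k(x^k)] = \nabla f(x^k)$, so (i) is satisfied. Next I would check (iii): this is an immediate consequence of the instruction $\alpha_0^k \ge \bar{\alpha}$ coupled with standard results ensuring that the backtracking terminates in finitely many steps on $L_k$-smooth stochastic objectives along a direction of descent for $f_k$.

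The core of the argument is to establish the three direction conditions. Conditions \eqref{eq:sgr1} and \eqref{eq:sgr2} are enforced \emph{by construction} through the safeguard at lines \ref{step:safe}--\ref{step:end_safe}: either the CG-type candidate already satisfies both inequalities with the prescribed $c_1, c_2$, or the algorithm falls back to $d_k = -g_k(x^k)$, which trivially satisfies \eqref{eq:sgr1}-\eqref{eq:sgr2} provided $c_1 \ge 1$ and $c_2 \le 1$. For condition \eqref{eq:sgr3}, I would apply Proposition \ref{prop:var_d_k}: the explicit clipping $\min\{1, M/\|d_{k-1}\|\}$ inside the definition of the trial direction ensures the effective previous direction used has norm $\le M$, while the hypothesis $\operatorname{Var}_k(\beta_k) \le \hat{c}_3 \operatorname{Var}_k(g_k(x^k))$ feeds directly into Proposition \ref{prop:var_d_k}, producing the required constant $c_3 = 1 + M^2\hat{c}_3 + 2M\sqrt{\hat{c}_3}$. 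In the fallback case the variance condition holds a fortiori with the same $c_3 \ge 1$.

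With (i)-(iii) verified and the assumption $c_2 > c_3(1 - 1/\rho)$ making $\eta$ in the statement well-posed, I would simply invoke \cite[Th.\ 5.3]{lapucci2024convergenceconditionsstochasticline} to obtain the recursive bound \eqref{eq:main_thesis}; the complexity statement then follows by solving $(\eta\bar{\alpha})^k(f(x^0)-f(x^*)) \le \epsilon$ for $k$ when $\eta\bar{\alpha} < 1$, noting that each iteration requires a constant expected number of stochastic function and gradient evaluations thanks to the uniform lower bound on $\alpha_0^k$ and the $L_{\max}$-smoothness of the $f_k$. The only nontrivial aspect is making certain that both branches of the safeguard yield compatible constants so that a \emph{single} $c_3$ works for the whole sequence; all other parts are mechanical substitution into the pre-existing convergence template.
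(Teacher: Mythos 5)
Your proposal is correct and follows essentially the same route as the paper: verify that the safeguard at lines \ref{step:safe}--\ref{step:end_safe} enforces \eqref{eq:sgr1}--\eqref{eq:sgr2}, use Proposition \ref{prop:var_d_k} (with the clipping of $\beta_k$ and of $\|d_{k-1}\|$) for \eqref{eq:sgr3}, and then invoke \cite[Th.\ 5.3]{lapucci2024convergenceconditionsstochasticline}. You are in fact somewhat more explicit than the paper, e.g.\ in noting that the fallback direction $-g_k(x^k)$ requires $c_1\ge 1$ and $c_2\le 1$ and that a single $c_3$ must cover both branches (which is exactly what Proposition \ref{prop:var_d_k} delivers by treating $\beta_k$ as the branch-dependent random variable).
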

\begin{proof}
	The sequence of directions $\{d_k\}$ satisfies the conditions \eqref{eq:sgr1}-\eqref{eq:sgr2} with constants $c_1$ and $c_2$ as a result of steps \ref{step:safe}-\ref{step:end_safe}. By Proposition \ref{prop:var_d_k} and the instructions of the algorithm, it also satisfies condition \eqref{eq:sgr3} with $c_3$ defined as in the statement of the theorem. All the assumptions of \cite[Theorem 5.3]{lapucci2024convergenceconditionsstochasticline} are thus satisfied by the ones here, and we consequently get the thesis.	 
	
\end{proof}

\color{black}

\section{Computational Experiments}
\label{sec:exp}
In this section, we present the results of a comprehensive experimental analysis of the proposed Mini-Batch Conjugate Gradient method with Data Persistency (MBCG-DP). As previously discussed in Section \ref{sec:convergence}, the bias correction term required to make the gradient estimate unbiased actually opposes to the effect we introduce with the overlap strategy (for completeness, we show in Figure \ref{fig:biased_mnist} that bias correction indeed slows down the algorithm on the \texttt{MNIST} task considered in Section \ref{sec:mb_pers}). In the following computational experiments we therefore consider the algorithm version without any bias correction. 
All experiments reported in this section were implemented in \texttt{Python3}\footnote{The code for the proposed method is available at \href{https://github.com/dadoPuccio/MB-Conjugate-Gradient-DP}{github.com/dadoPuccio/MB-Conjugate-Gradient-DP}} and run on a machine with the following characteristics: Ubuntu 22.04, Intel i5-13400F Processor, 32 GB RAM, Nvidia GeForce RTX 4060 8GB.

\begin{figure}[htbp]
	\centering
	\includegraphics[width=.4\textwidth]{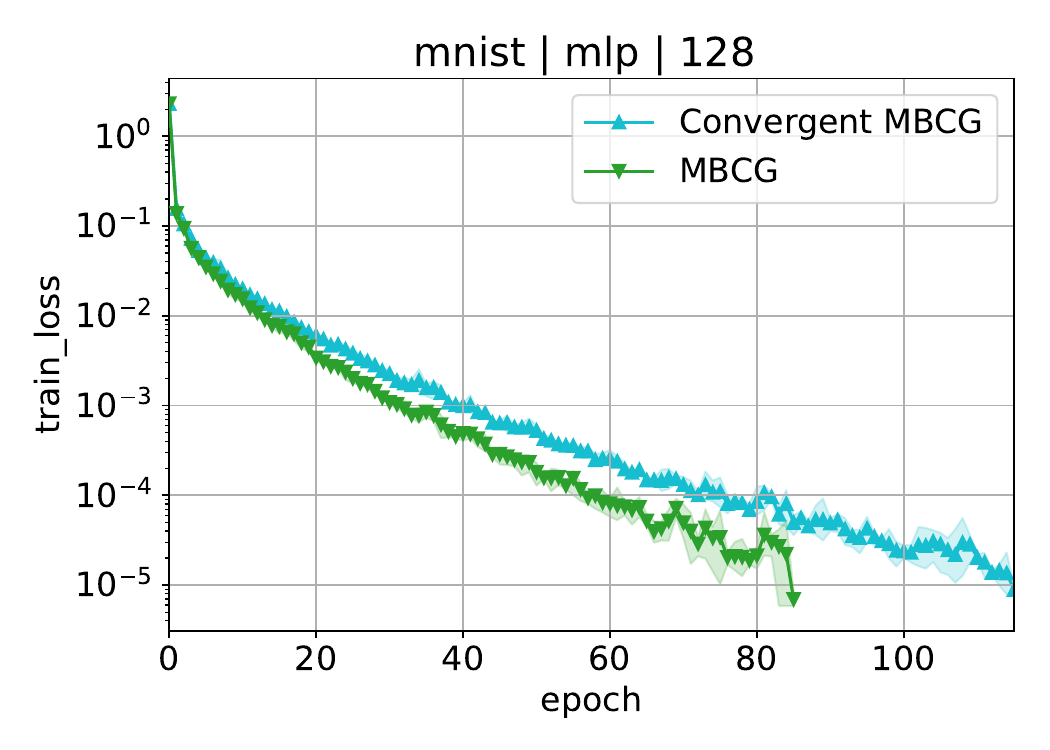}
	\hfil
	\includegraphics[width=.4\textwidth]{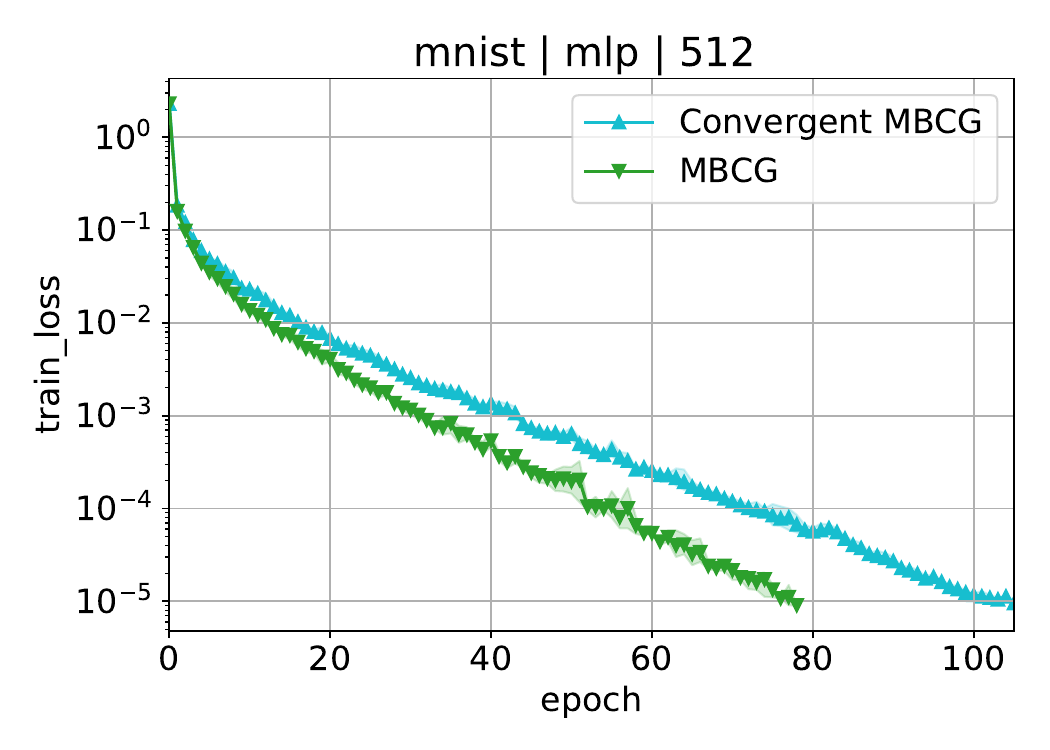}
	\caption{Behavior of MBCG-DP and its unbiased variant when employed to train a multi-layer perceptron on \texttt{MNIST} dataset with batch size of 128 and 512.}
	\label{fig:biased_mnist}
\end{figure}

The initial analysis focuses on the behavior of MBCG-DP when using different configurations. Specifically, we aim at identifying
\begin{enumerate}[(a)]
	\item the best CG rule to select $\beta_k$,
	\item the optimal strategy for selecting the tentative step size $\alpha_0^k$ in the line-search,
	\item the most effective method for modifying $d_k$ when it is not a descent direction for the current mini-batch objective.
\end{enumerate}

We consider in this phase the same two learning problems as in Section \ref{sec:mb_pers}: an RBF-kernel classifier on the \texttt{ijcnn} dataset and a multi-layer perceptron on the \texttt{MNIST} dataset.
The algorithm is run in a random-reshuffling fashion, with 50\% mini-batch persistency, and batch sizes of 128 and 512 are considered for both tasks. Standard parameters are employed for the nonmonotone Armijo-type line search, with $\gamma=0.5$, $\delta=0.5$. For each experiment 3 different seeds are considered.

In Figure \ref{fig:CG_rule_comparison} we reported the behavior of MBCG-DP when comparing the CG rules of Fletcher-Reeves (FR), Polyak-Polak-Ribiére (PPR) and Hestenes-Stiefel (HS). In this experiment the generalized SPS \eqref{eq:sps_momentum} with $c=1$ is used to select $\alpha_0^k$ and the momentum clipping approach (contraction factor 0.5) from \cite{fan2023} is employed in case $d_k$ is not a descent direction. 
The Fletcher-Reeves rule clearly resulted the most effective one in both tasks and for both batch sizes.

\begin{figure}[htbp]
	\centering
	
	\subfloat{\includegraphics[width=.32\textwidth]{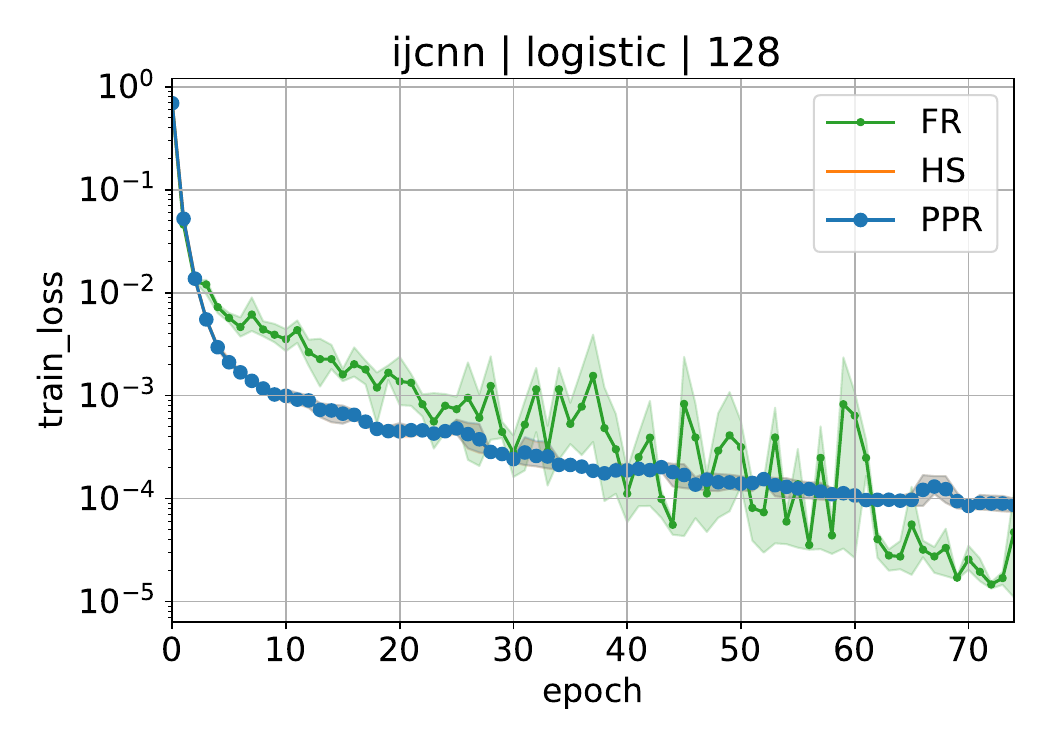}}
	\hfill
	\subfloat{\includegraphics[width=.32\textwidth]{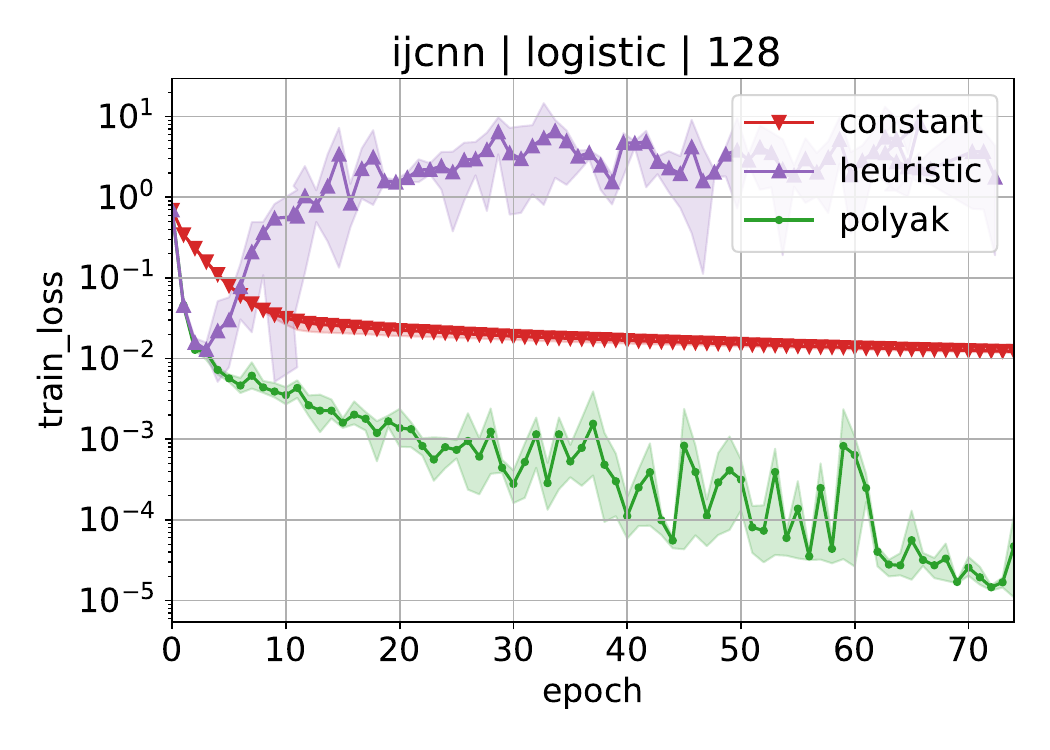}}
	\hfill
	\subfloat{\includegraphics[width=.32\textwidth]{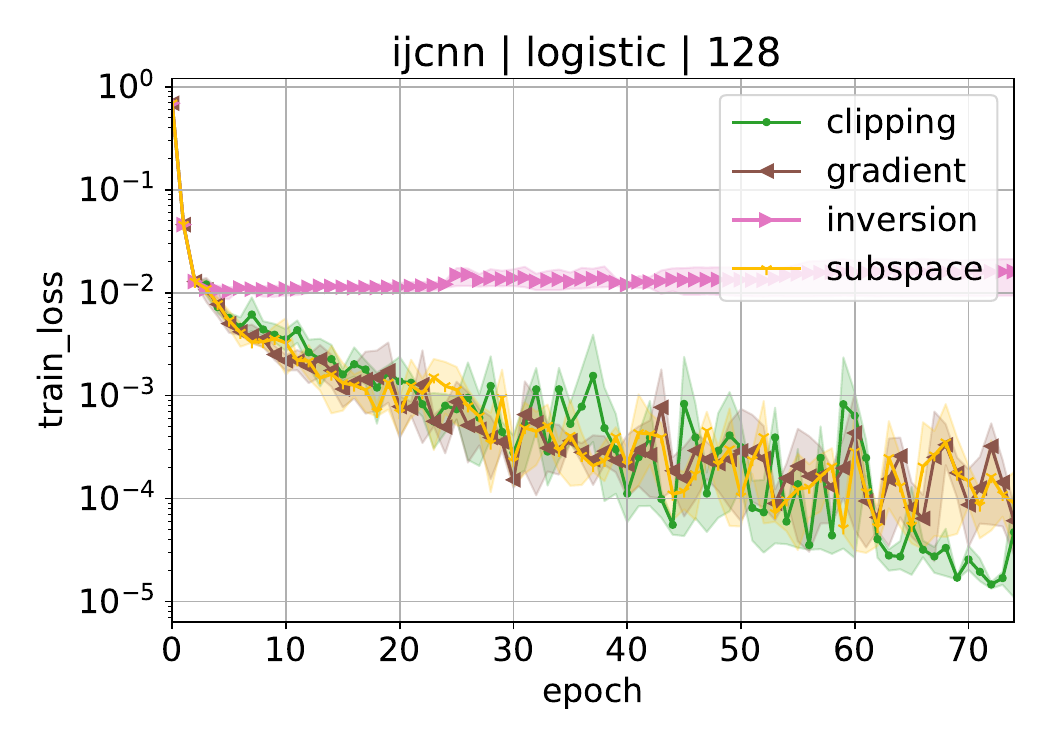}}
	\\
	\subfloat{\includegraphics[width=.32\textwidth]{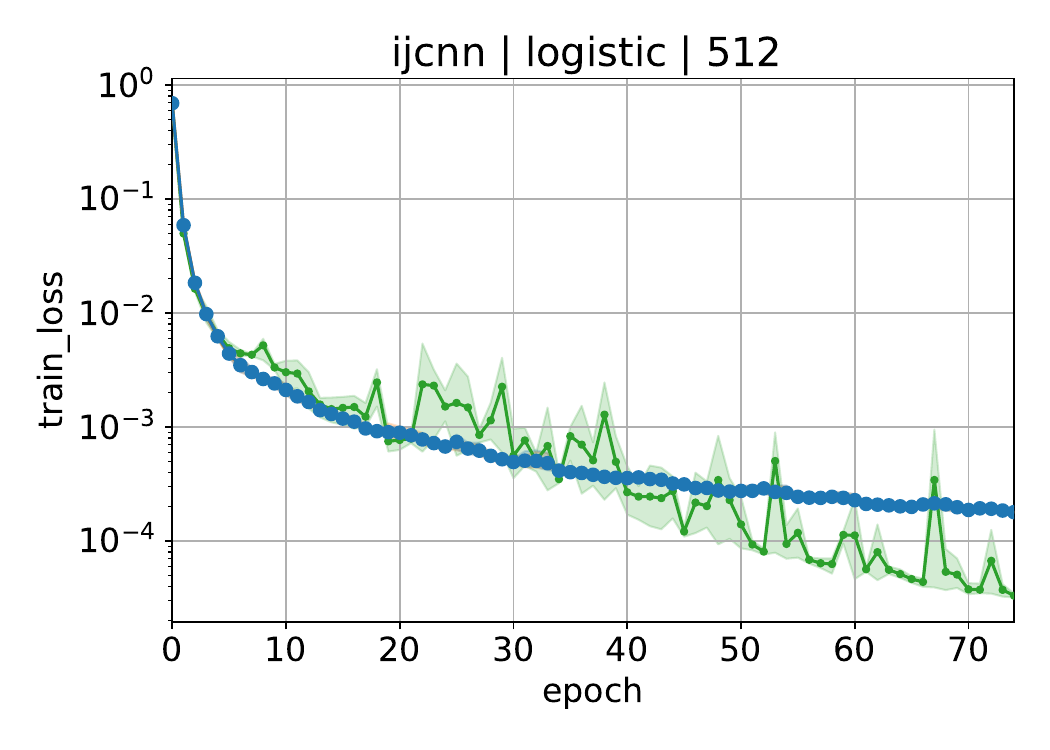}}
	\hfill
	\subfloat{\includegraphics[width=.32\textwidth]{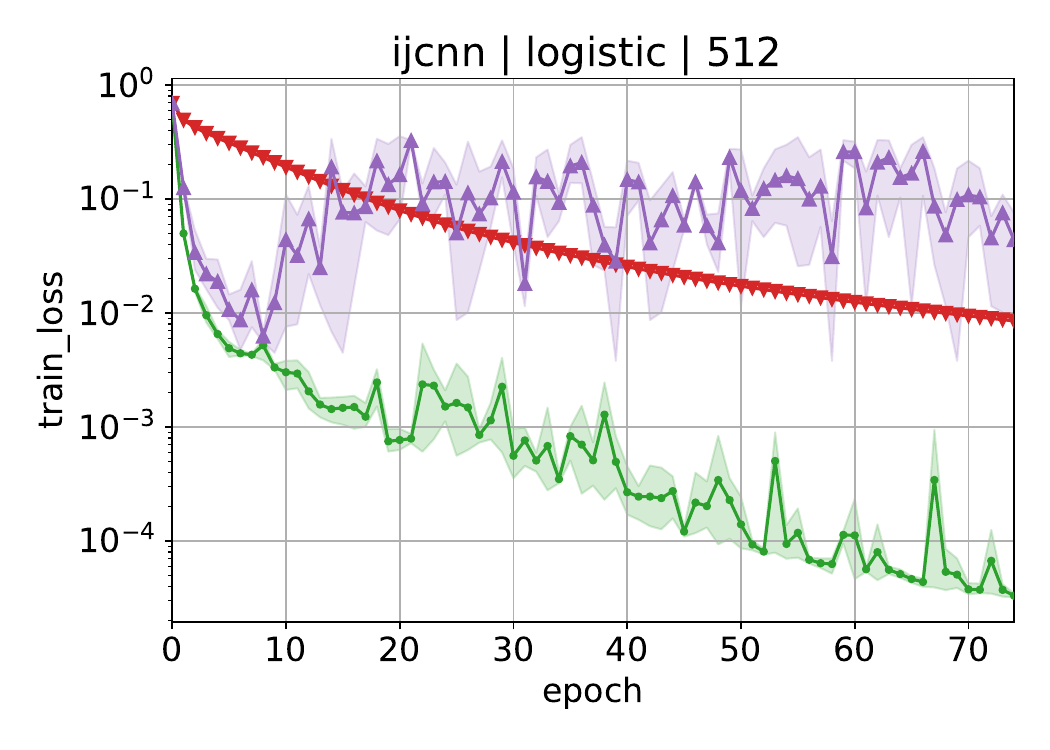}}
	\hfill
	\subfloat{\includegraphics[width=.32\textwidth]{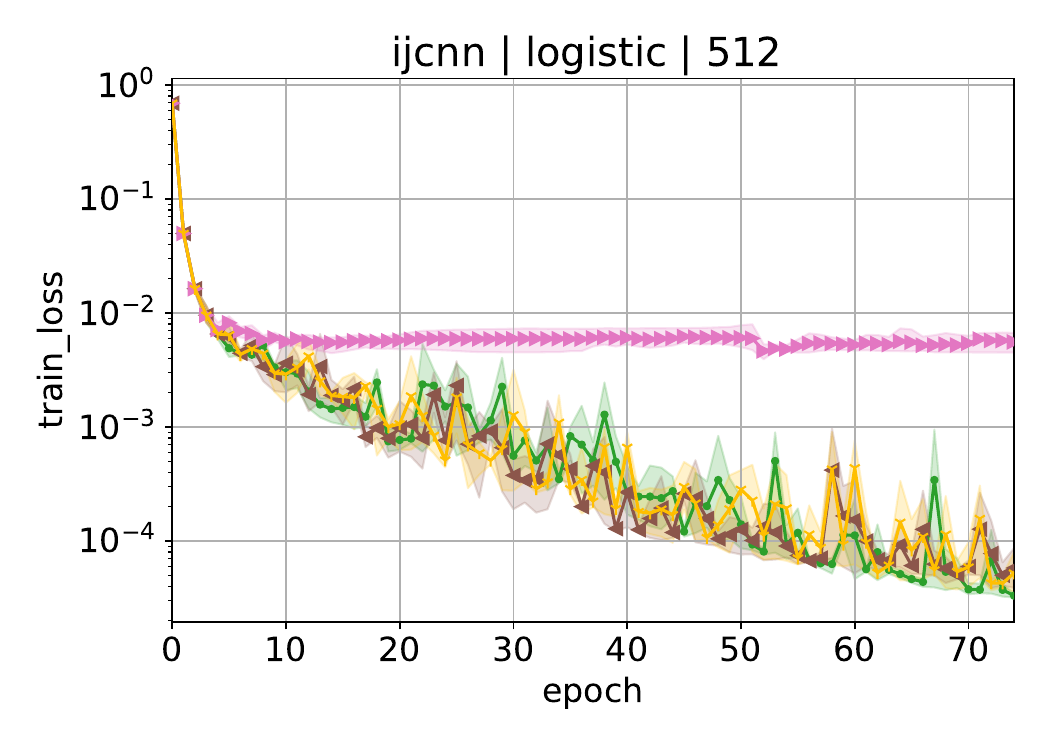}}
	\\
	\subfloat{\includegraphics[width=.32\textwidth]{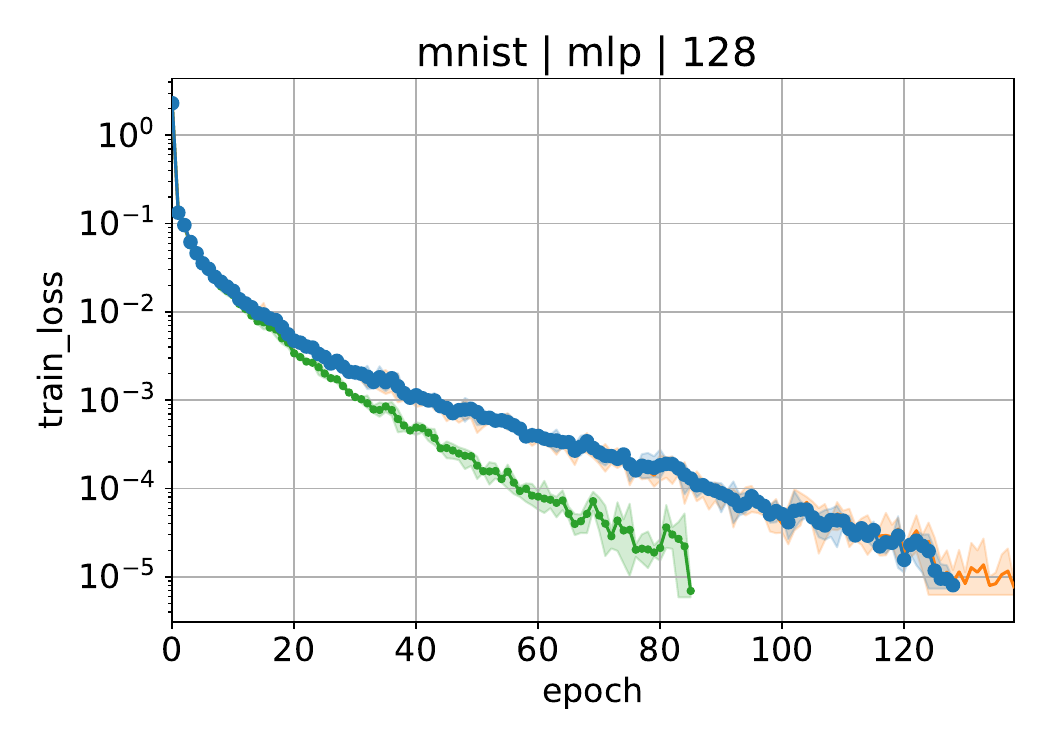}}
	\hfill
	\subfloat{\includegraphics[width=.32\textwidth]{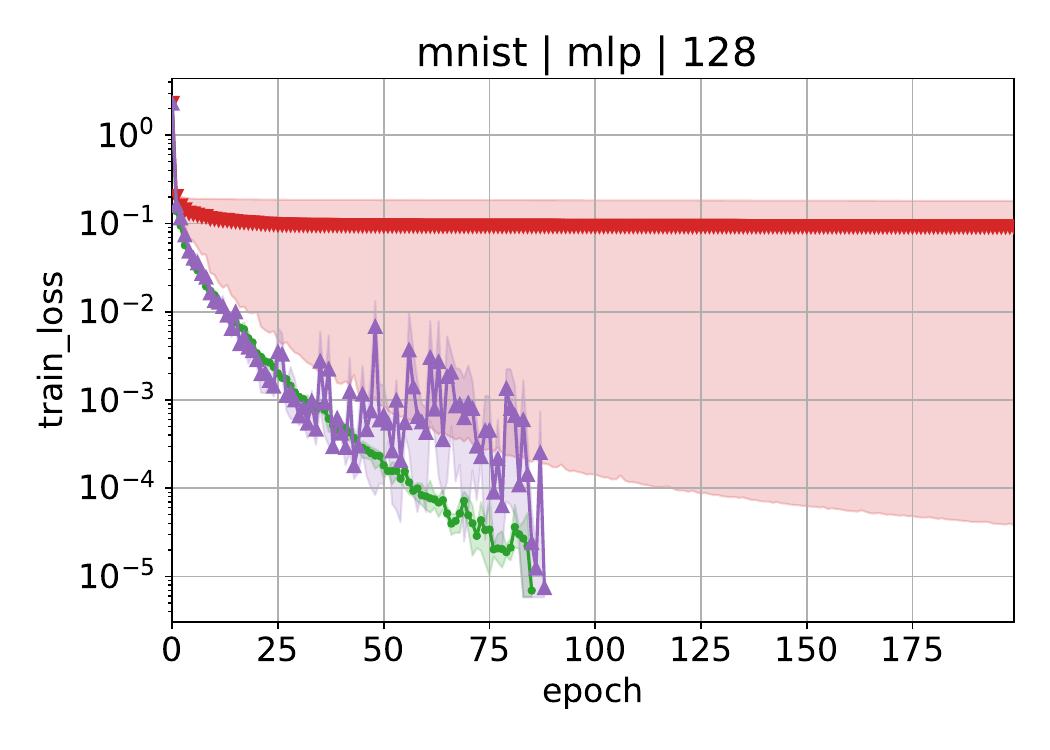}}
	\hfill 
	\subfloat{\includegraphics[width=.32\textwidth]{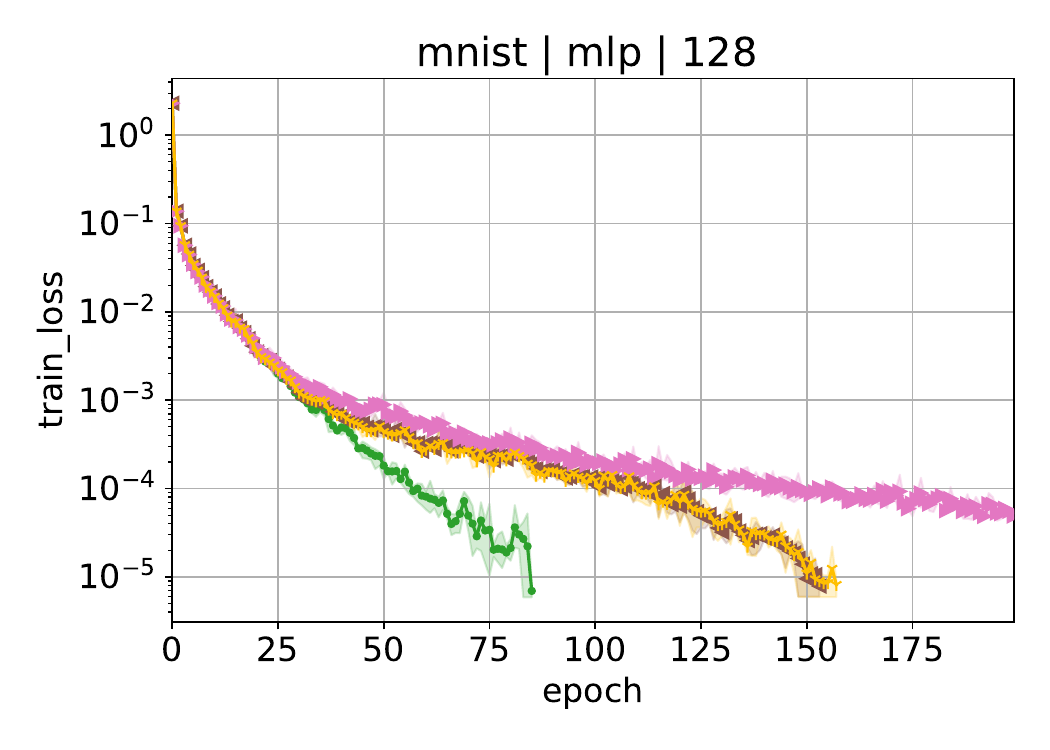}}
	\\
	\setcounter{subfigure}{0}
	\subfloat[Comparison of CG rules. \label{fig:CG_rule_comparison}]{\includegraphics[width=.32\textwidth]{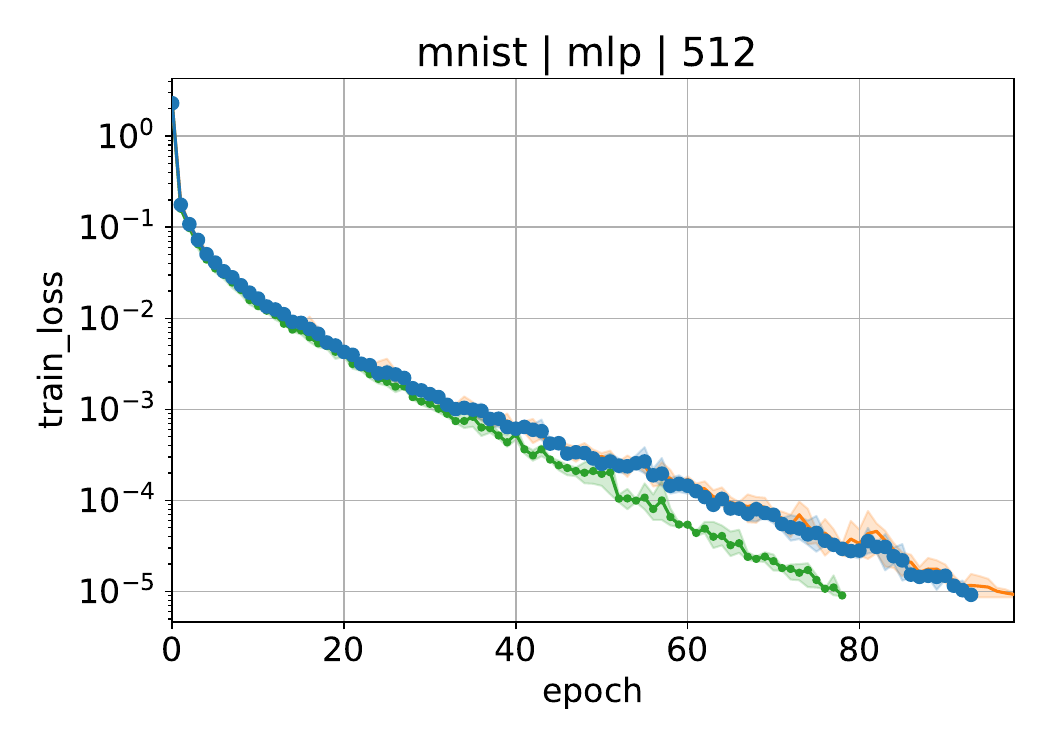}}
	\hfill
	\subfloat[Comparison of tentative step sizes $\alpha_0^k$. \label{fig:CG_tentative_step_comparison}]{\includegraphics[width=.32\textwidth]{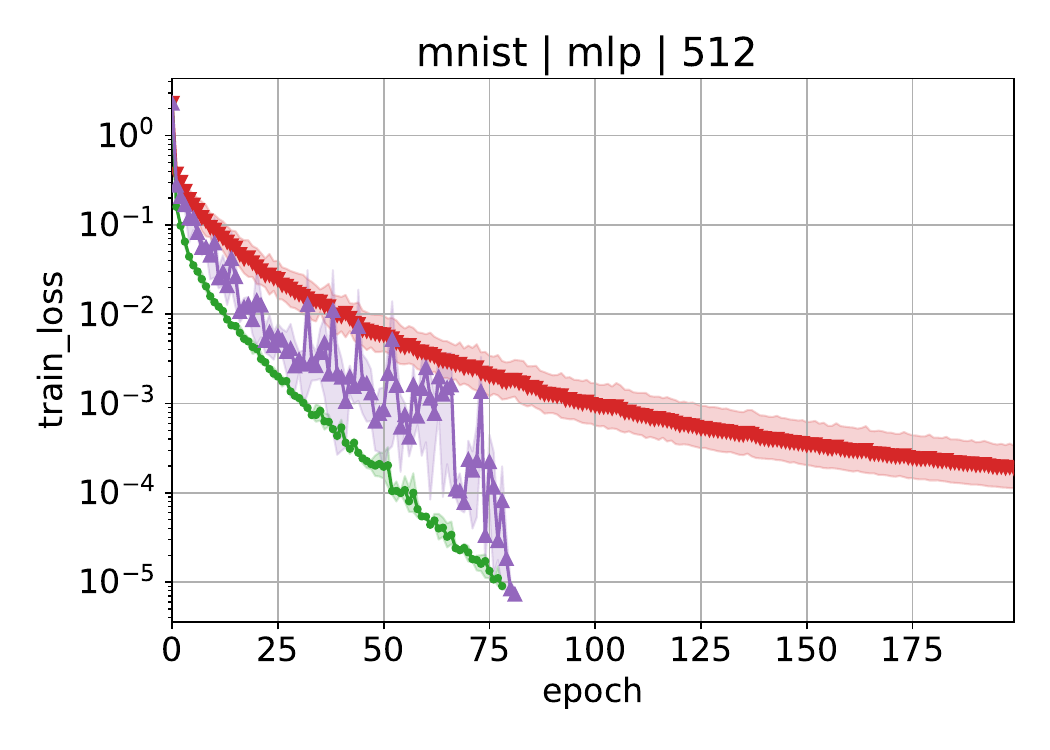}}
	\hfill
	\subfloat[Comparison of the recovery strategies for $d_k$. \label{fig:CG_recover_direction}]{\includegraphics[width=.32\textwidth]{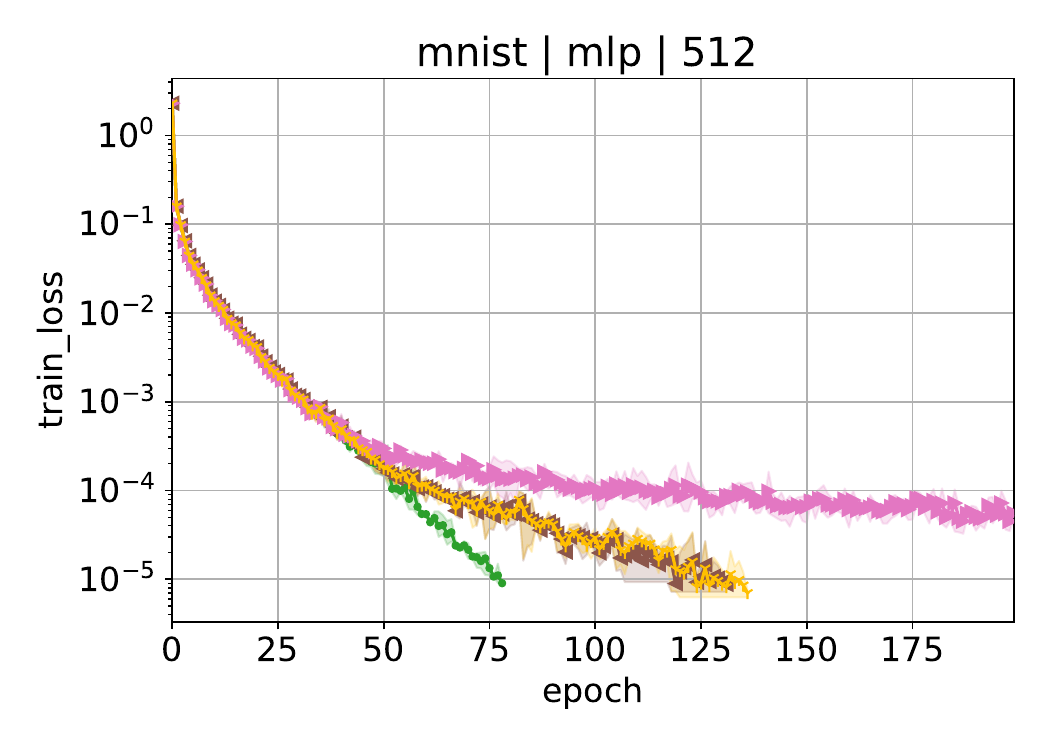}}
	
	\caption{Behavior of MBCG-DP when employed to train a kernel classifier on \texttt{ijcnn} and a multi-layer perceptron on \texttt{MNIST} dataset. The left column refers to the comparison of the FR, HS and PPR rules for selecting $\beta_k$; the central column refers to the comparison of rules for selecting $\alpha_0^k$ (constant, heuristic, SPS); the right column refers to different safeguard strategies for ensuring a descent direction is used (clipping, gradient, inversion, subspace).}
	
	\label{fig:CG_comparison}
\end{figure}

In Figure \ref{fig:CG_tentative_step_comparison} we show the behavior of MBCG-DP when using different approaches for selecting the tentative step $\alpha_0^k$ for the line search: a constant value, the heuristic employed in \cite{vaswani2019, fan2023} and the generalized SPS \eqref{eq:sps_momentum} with $c=1$ were tested. The Fletcher-Reeves rule is used in this experiment; in case $d_k$ is not a descent direction the repeated momentum clipping approach is used, again with contraction factor of $0.5$. The generalized SPS proved to be the most effective choice.

Finally, in Figure \ref{fig:CG_recover_direction} we observe the impact of different recovery options to retrieve a descent direction, namely: switching to the simple (negative) stochastic gradient direction, recomputing the direction according to the subspace optimization strategy proposed in \cite{lapucci2024globally}, inverting the direction and taking $-d_k$ and adopting the repeated momentum clipping approach (contraction factor of $0.5$) from \cite{fan2023}. In this experiment, the Fletcher-Reeves rule is employed in combination with the generalized SPS initial step size. The clipping strategy resulted the preferable choice especially in nonconvex problems. 

This detailed comparison allows us to identify a promising setup to run MBCG-DP in further experiments to compare it to state-of-the-art methods. In what follows, we thus employed the Fletcher-Reeves rule for $\beta_k$, the generalized stochastic Polyak step size \eqref{eq:sps_momentum} for choosing $\alpha_0^k$ and the clipping strategy on $\beta_k$ as a recovery option for obtaining a descent direction. The resulting method will be denoted by the acronym {MBCG\_FR}.

The other algorithms included in the following experiments are: vanilla SGD with momentum, Adam \cite{kingma2015}, SLS \cite{vaswani2019}, PoNoS \cite{galli2023} and MSL\_SGDM \cite{fan2023}. The benchmark of test problems include three convex instances, consisting in the training of an RBF-kernel classifier on \texttt{mushrooms}, \texttt{ijcnn} and \texttt{rcv1} datasets\footnote{datasets available at \href{https://www.csie.ntu.edu.tw/~cjlin/libsvmtools/datasets/binary.html}{csie.ntu.edu.tw/~cjlin/libsvmtools/datasets}}. Moreover, three nonconvex learning tasks are taken into account: once again a multi-layer perceptron with 1000 hidden units and ReLU activations to be trained on \texttt{MNIST}, a three-layered convolutional architecture with 32 channels each, followed by a fully-connected layer of 512 units to be trained on \texttt{FashionMNIST} \cite{xiao2017fashionMNIST} dataset\footnote{available at \href{https://github.com/zalandoresearch/fashion-mnist}{github.com/zalandoresearch/fashion-mnist}} and a ResNet18 architecture to be trained on \texttt{CIFAR10} \cite{krizhevsky2009learning} dataset\footnote{available at \href{https://www.cs.toronto.edu/~kriz/cifar.html}{cs.toronto.edu/~kriz/cifar.html}}. For the latter network, we employed the normalization-free setup proposed in \cite{civitelli2023}.
Batch sizes of 128 and 512 were again considered for all problems.

\begin{table}[htbp]
	\caption{Mini-batch overlap settings leading to the best-performing algorithms across problems.}
	\label{tab:overlap_useful}
	\centering
	\footnotesize
	\begin{tabular}{l|cc|cc|cc|cc|cc|cc|}
		\toprule
		& \multicolumn{6}{c|}{\texttt{RBF-kernel classifier}} & \multicolumn{2}{c|}{\texttt{MLP}} & \multicolumn{2}{c|}{\texttt{CNN}} & \multicolumn{2}{c|}{\texttt{ResNet18}}\\
		& \multicolumn{2}{c}{\texttt{mushrooms}} & \multicolumn{2}{c}{\texttt{ijcnn}} & \multicolumn{2}{c|}{\texttt{rcv1}} & \multicolumn{2}{c|}{\texttt{MNIST}} & \multicolumn{2}{c|}{\texttt{fashionMNIST}} & \multicolumn{2}{c|}{\texttt{CIFAR10}} \\
		\cmidrule(r){2-13}
		& 128 & 512 & 128 & 512 & 128 & 512 & 128 & 512 & 128 & 512 & 128 & 512 \\
		\midrule
		SGD+M & 75\% & 75\% & 75\% & 75\% & 75\% & 75\%   & 75\% & 75\% & 75\% & 75\% & 0\% & 0\% \\
		Adam & 75\% & 75\% & 75\% & 75\% & 75\% & 75\%    & 50\% & 50\% & 75\% & 75\% & 0\% & 10\% \\
		SLS & 75\% & 75\% & 50\% & 75\% & 75\% & 75\%     & 0\% & 75\% & 0\% & 0\% & 0\% & 0\% \\
		PoNoS & 75\% & 75\% & 75\% & 75\% & 50\% & 75\%   & 25\% & 50\% & 0\% & 0\% & 0\% & 0\% \\
		MSL\_SGDM & 75\% & 75\% & 50\% & 75\% & 0\% & 25\% & 10\% & 10\% & 75\% & 0\% & 25\% & 0\% \\
		\bottomrule
	\end{tabular}
\end{table}

\begin{figure}[htbp]
	\centering
	
	\subfloat{\includegraphics[width=.32\textwidth]{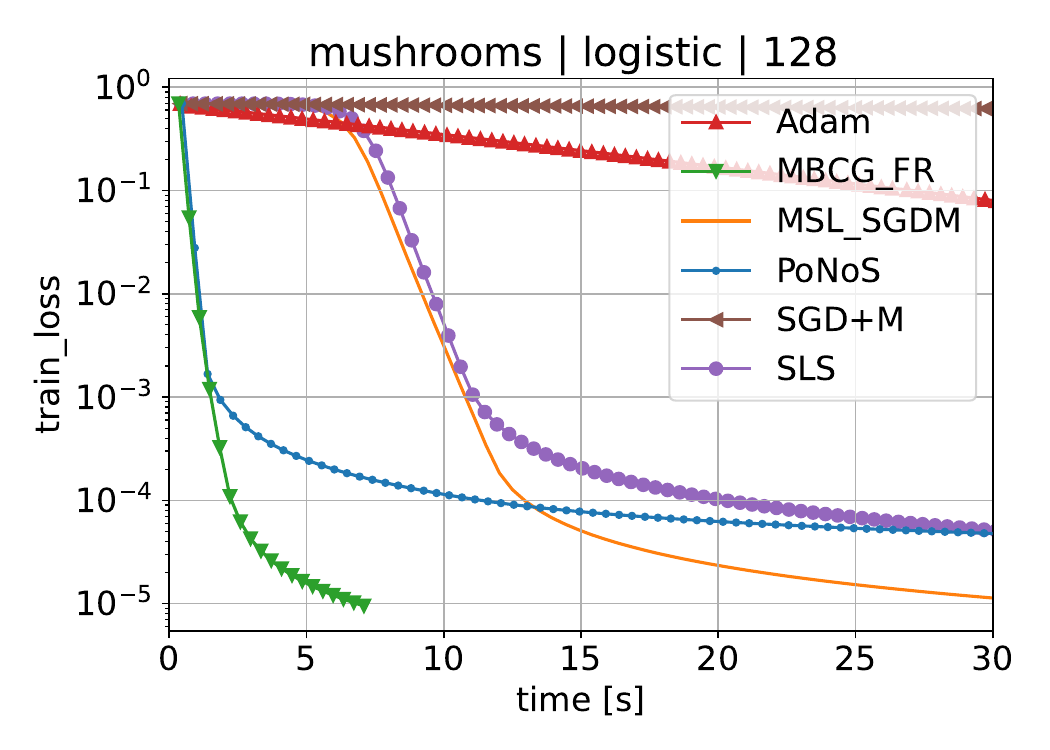}}
	\hfill
	\subfloat{\includegraphics[width=.32\textwidth]{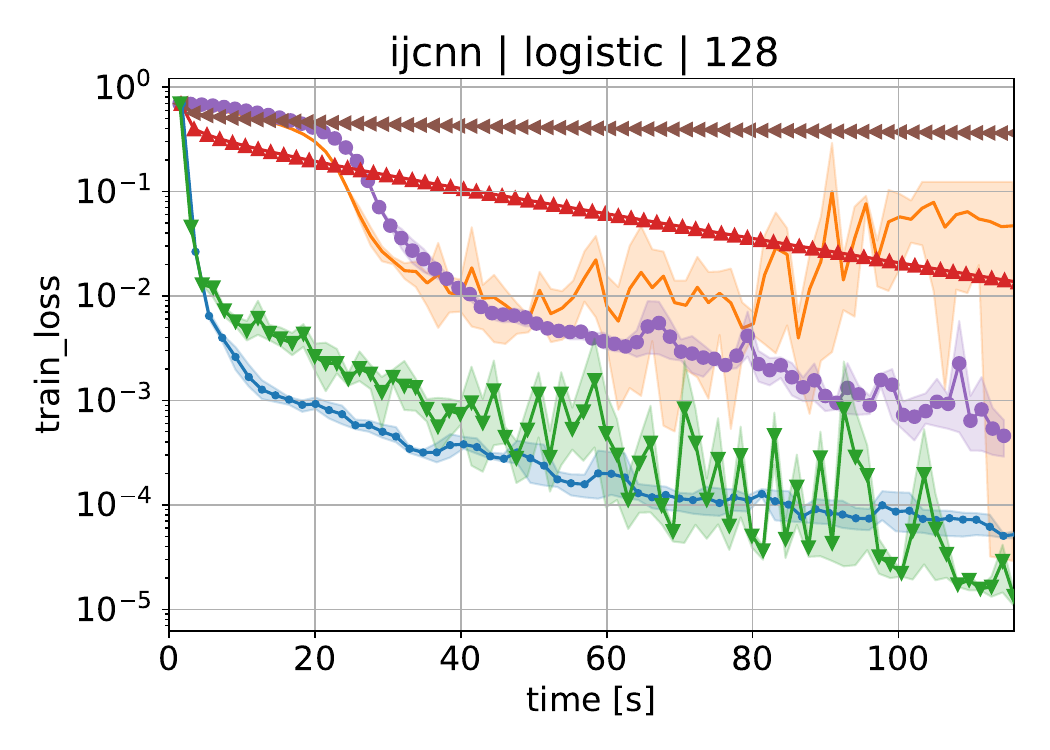}}
	\hfill
	\subfloat{\includegraphics[width=.32\textwidth]{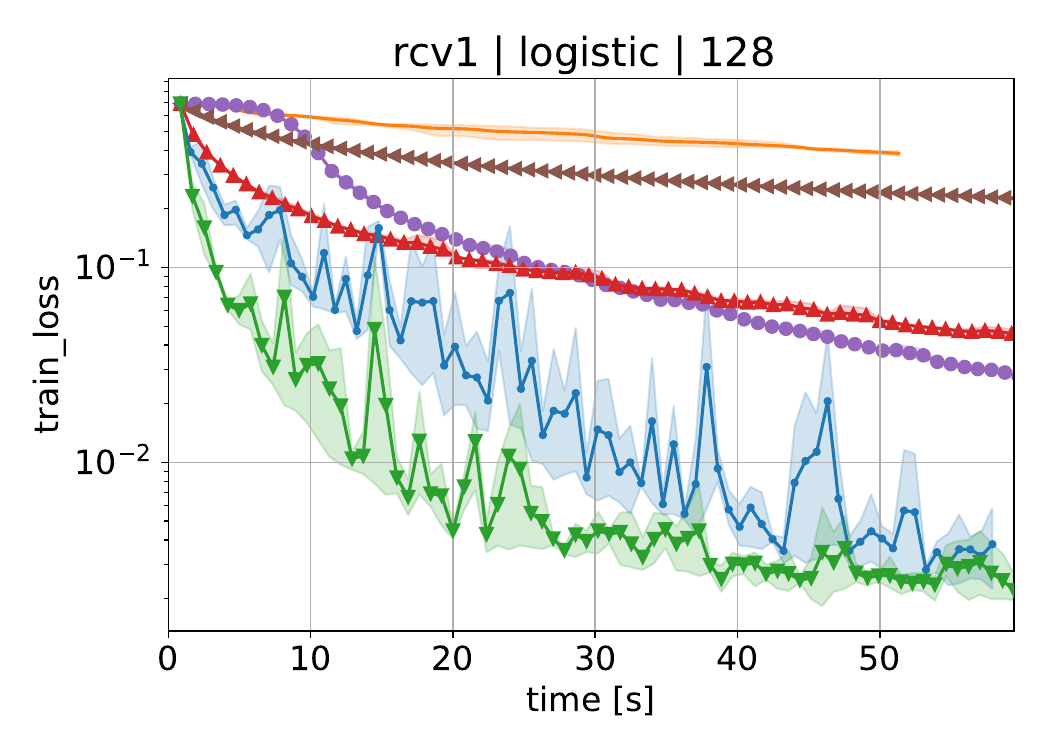}}
	\\
	\setcounter{subfigure}{0}
	\subfloat[RBF-kernel classifier on \texttt{mushrooms} dataset.]{\includegraphics[width=.32\textwidth]{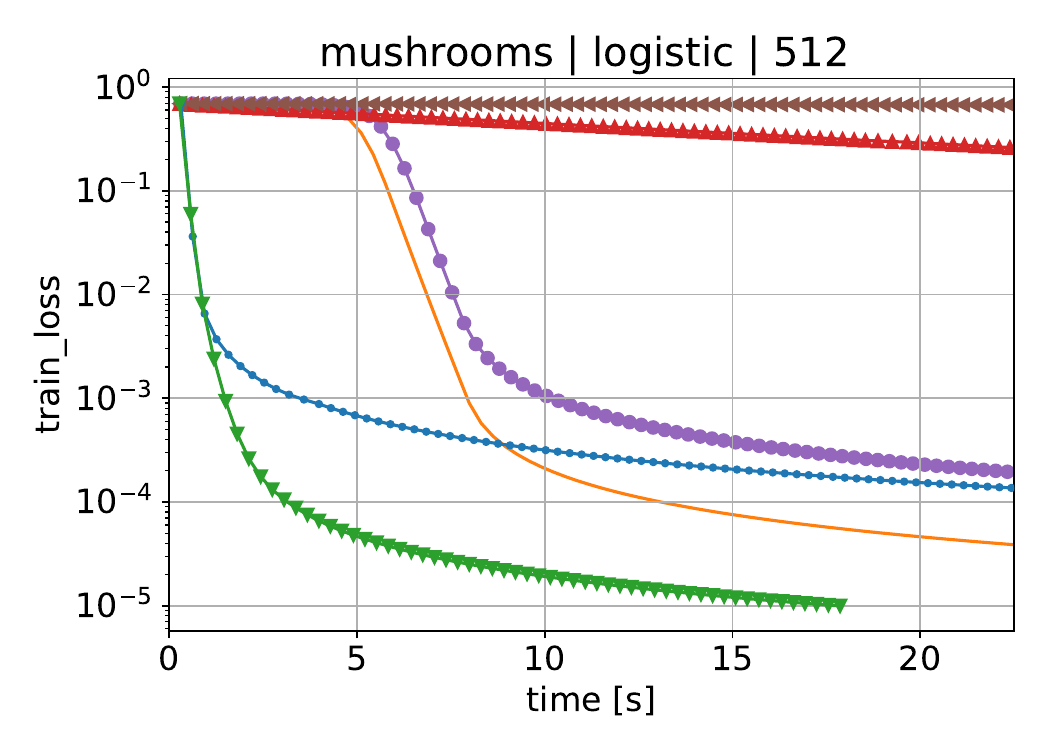}}
	\hfill
	\subfloat[RBF-kernel classifier on \texttt{ijcnn} dataset.]{\includegraphics[width=.32\textwidth]{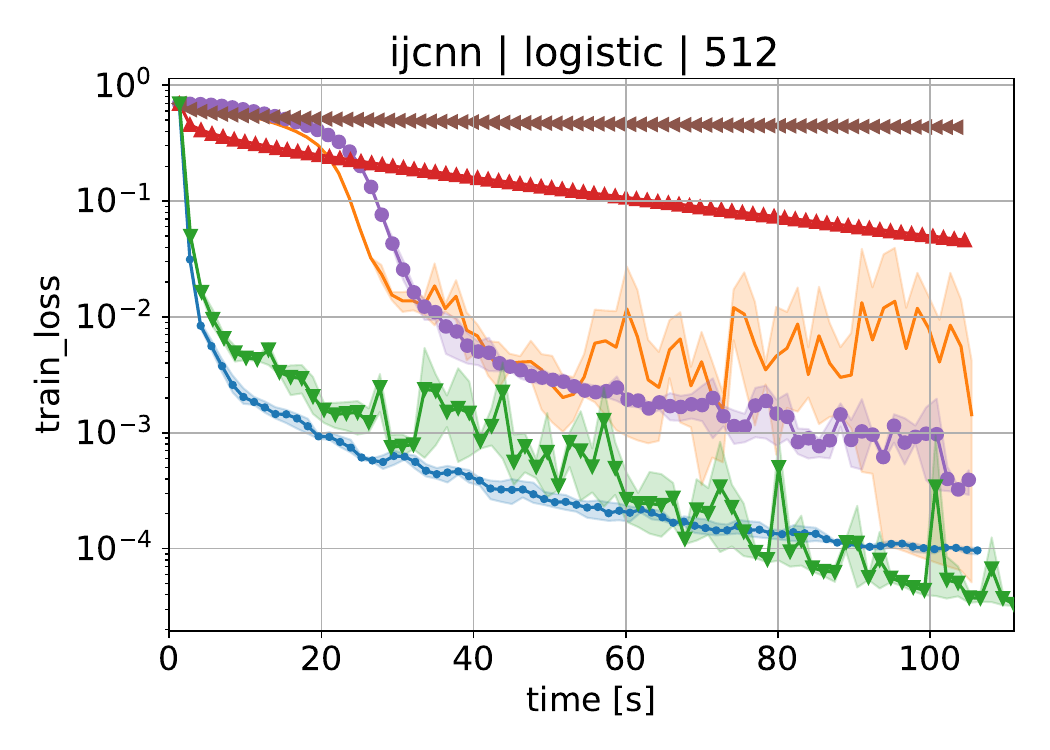}}
	\hfill
	\subfloat[RBF-kernel classifier on \texttt{rcv1} dataset.]{\includegraphics[width=.32\textwidth]{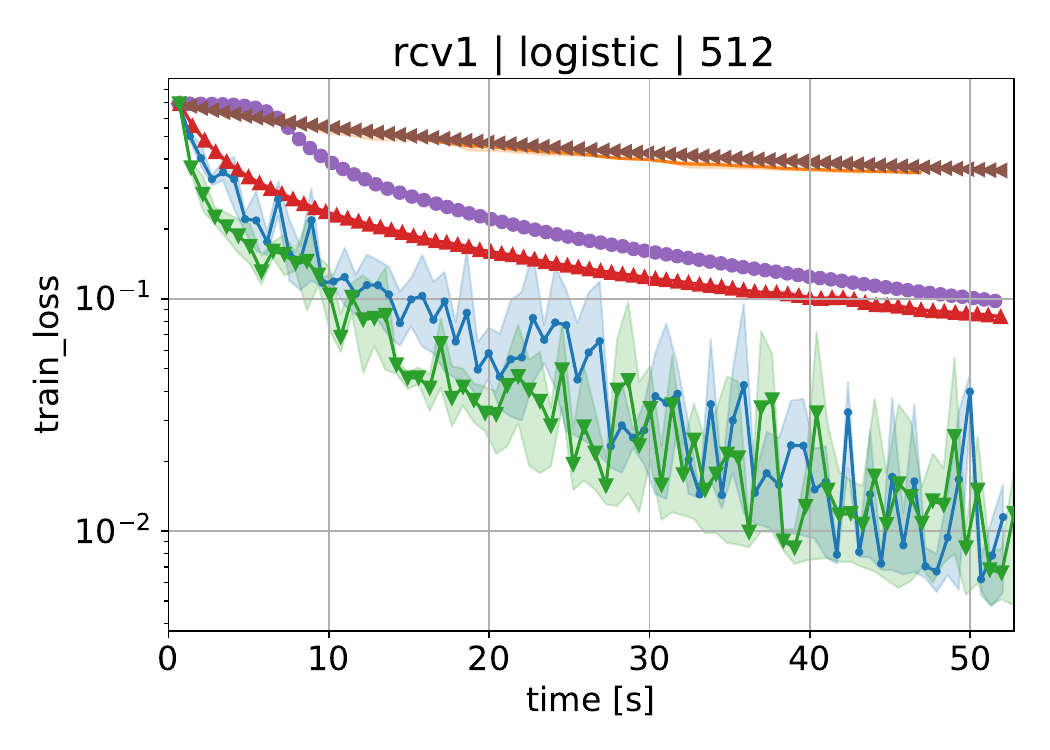}}
	\\
	\subfloat{\includegraphics[width=.32\textwidth]{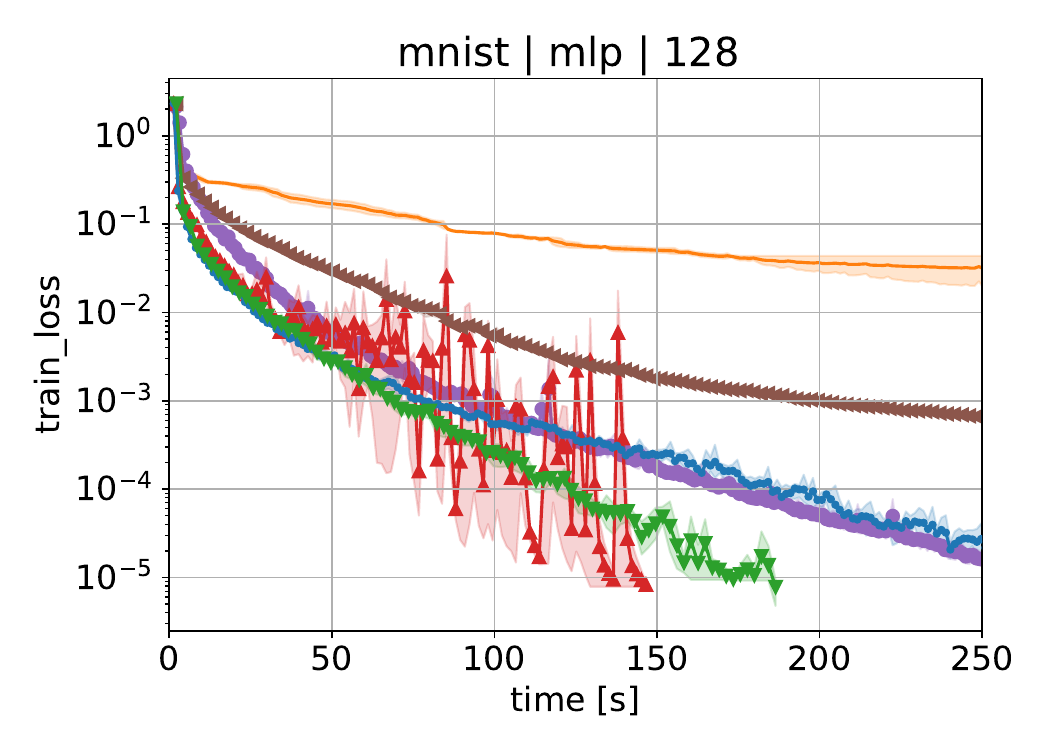}}
	\hfill
	\subfloat{\includegraphics[width=.32\textwidth]{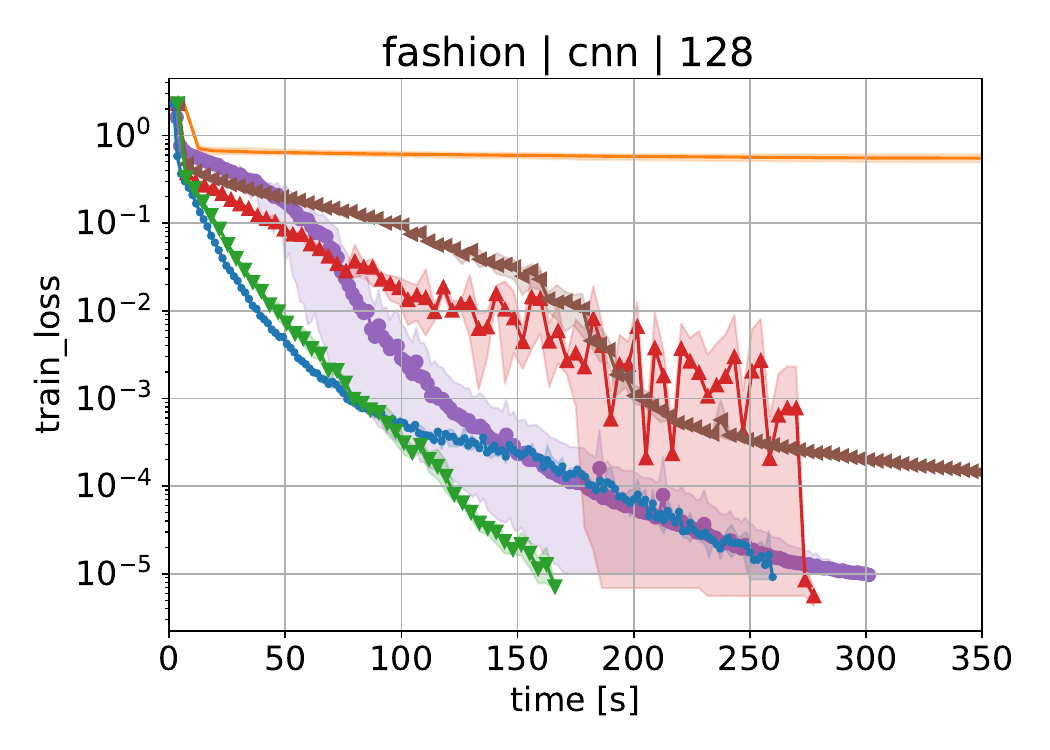}}
	\hfill
	\subfloat{\includegraphics[width=.32\textwidth]{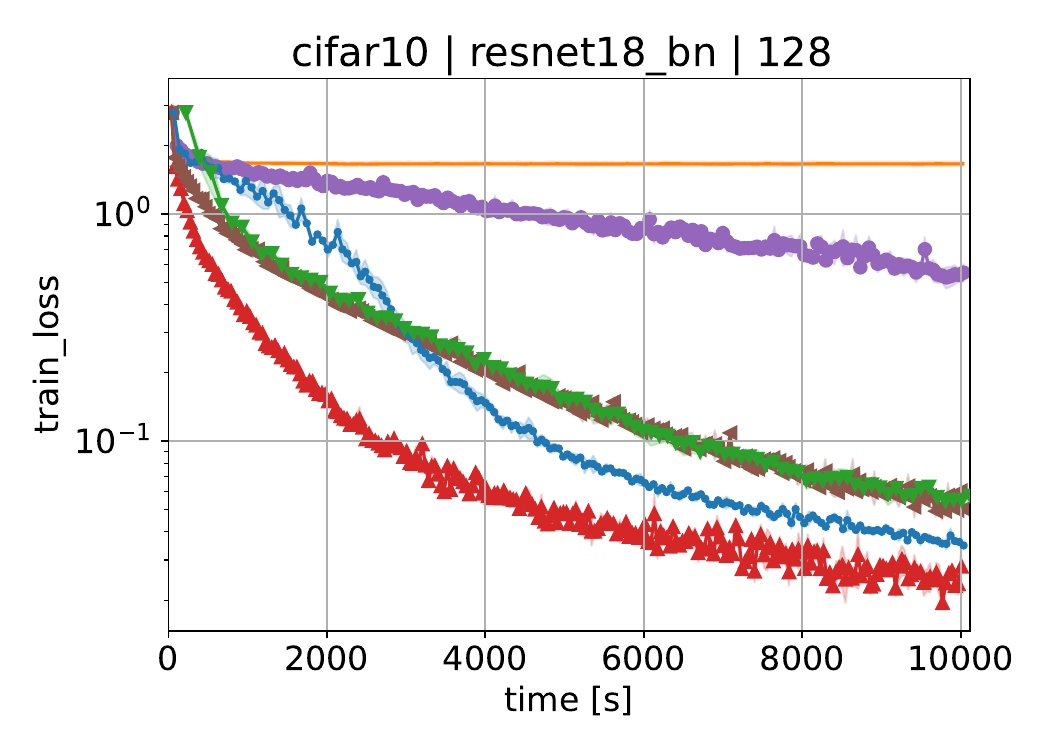}}
	\\
	\setcounter{subfigure}{3}
	\subfloat[Multi-layer perceptron on \texttt{MNIST} dataset.]{\includegraphics[width=.32\textwidth]{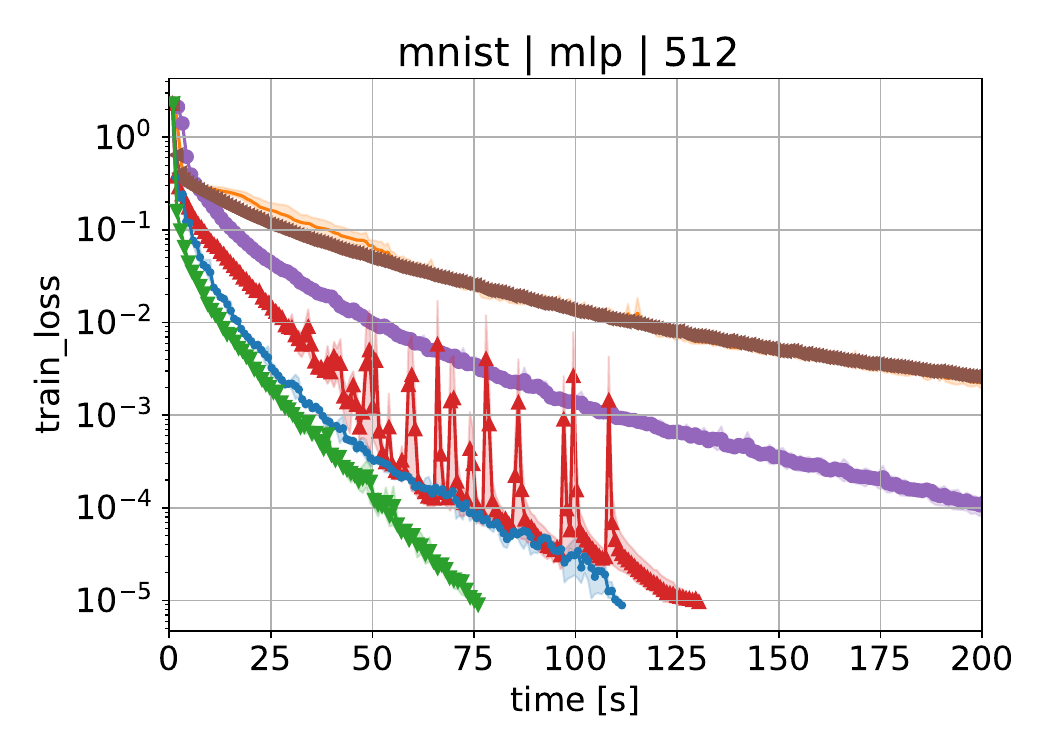}}
	\hfill
	\subfloat[CNN on \texttt{FashionMNIST} dataset.]{\includegraphics[width=.32\textwidth]{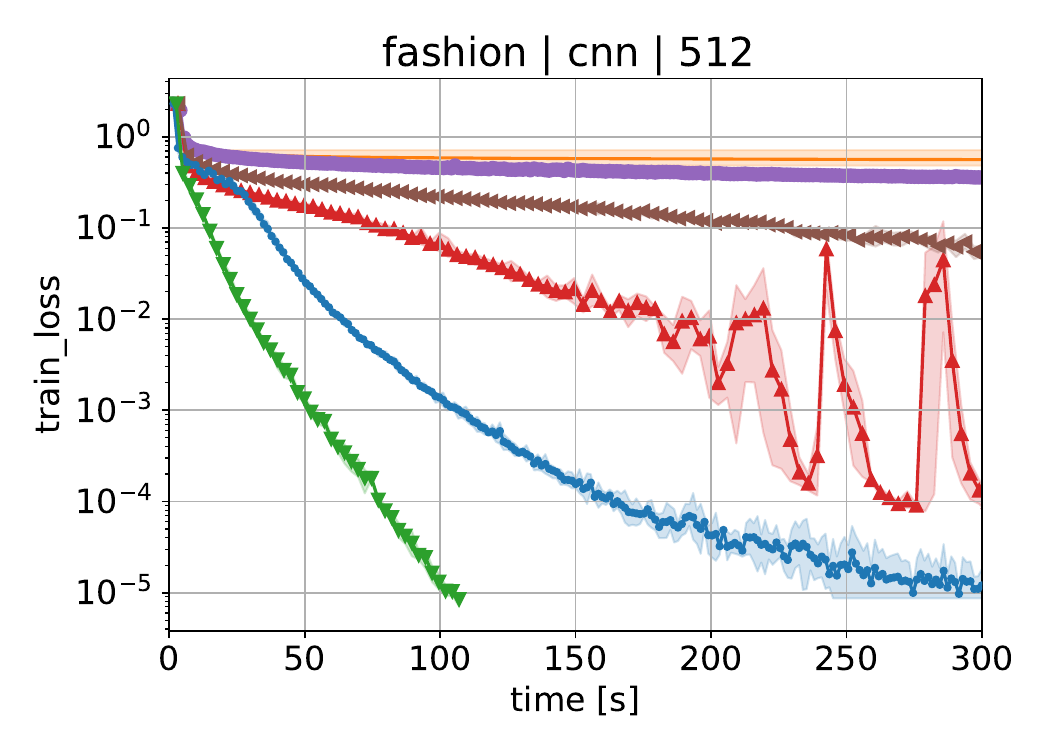}}
	\hfill
	\subfloat[ResNet18 on \texttt{CIFAR10} dataset.]{\includegraphics[width=.32\textwidth]{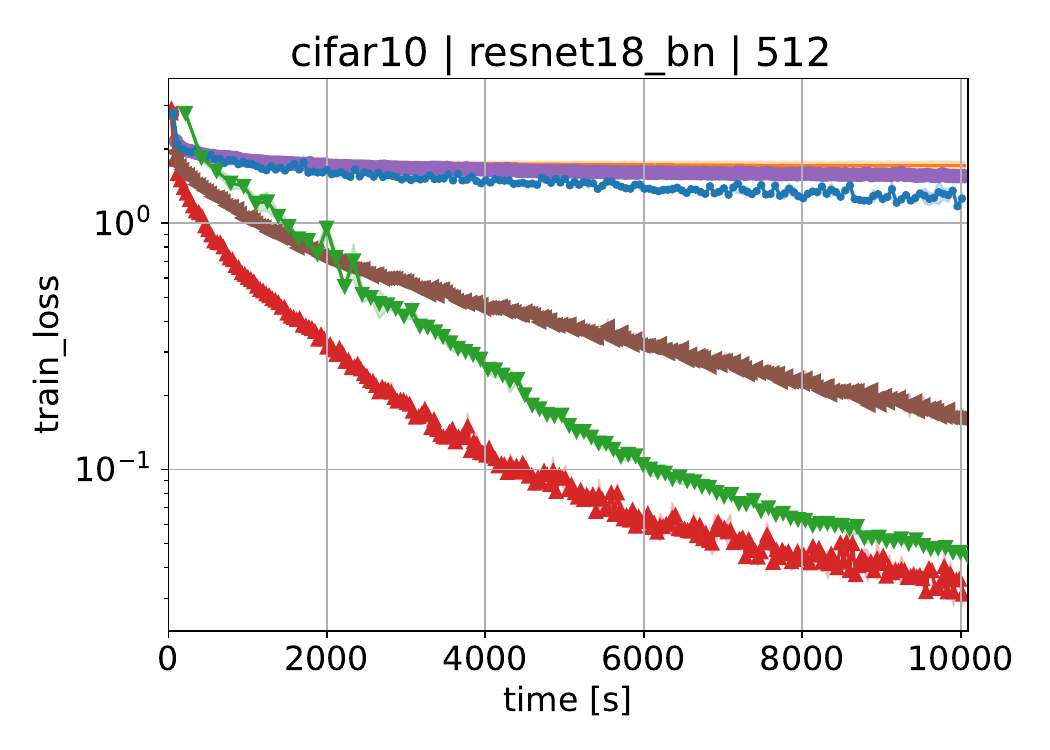}}
	
	\caption{Comparison of the proposed MBCG\_FR algorithm with other algorithms in the literature. The training loss over time is shown for each problem.}
	
	\label{fig:comparison_plots}
\end{figure}

In order to offer a fair comparison between the proposed MBCG-DP method and its competitors, each reference algorithm was evaluated to assess whether employing mini-batch persistency could enhance its performance. For each problem considered, the reference algorithms are tested using an overlap of 0\%, 10\%, 25\%, 50\% and 75\%. An algorithm is deemed to perform better with a certain overlap value if it either terminates before the others with a superior solution or, at the termination time of the first configuration to finish, it holds a better solution. The training loss is compared to asses whether a solution is better than the other. We report in Table \ref{tab:overlap_useful} the outcome of this preliminary analysis.
In convex problems, we observe that overlap is almost always beneficial. However, with larger architectures in nonconvex tasks, this benefit does not appear to hold. We argue that, in such cases, the dominant computational burden shifts from data I/O operations to the evaluation of the training loss and its gradient. Since the number of iterations per epoch with a 50\% overlap is doubled, in this cases the execution time per epoch will also double. In addition, we remark that hyperparameters were selected for each considered method based on preliminary experiments carried out without employing data persistency. A specialized selection of hyperparameters values for each method and overlap percentage might further enhance performance when data persistency is employed.

\begin{figure}[htbp]
	\centering
	
	\subfloat{\includegraphics[width=.32\textwidth]{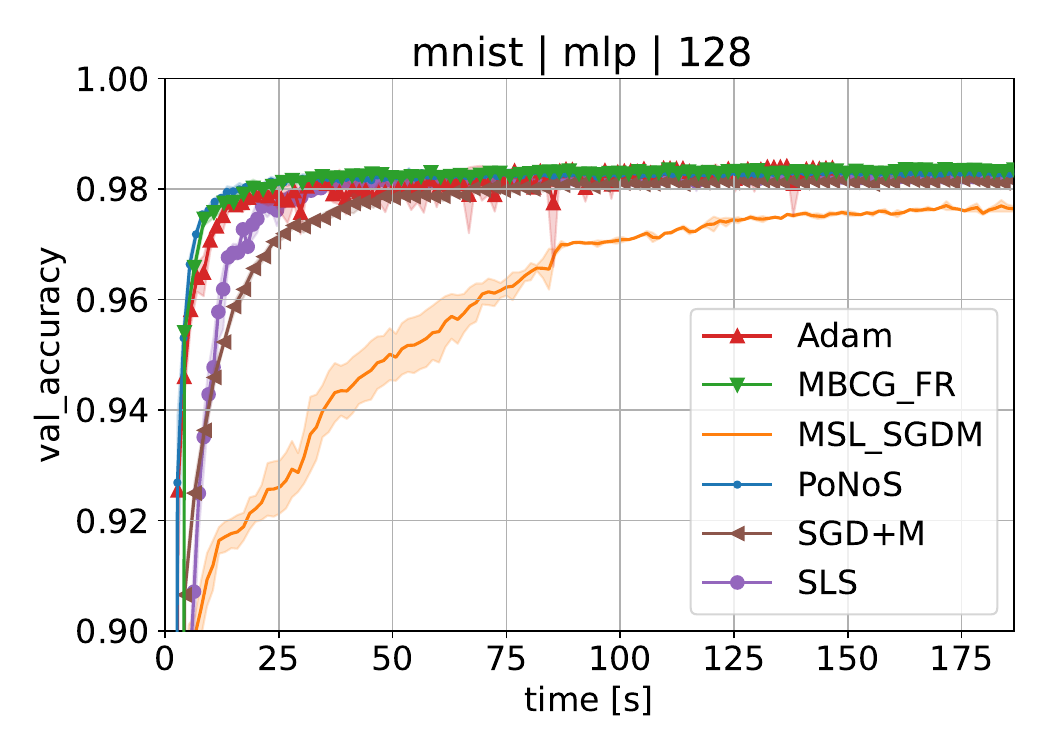}}
	\hfill
	\subfloat{\includegraphics[width=.32\textwidth]{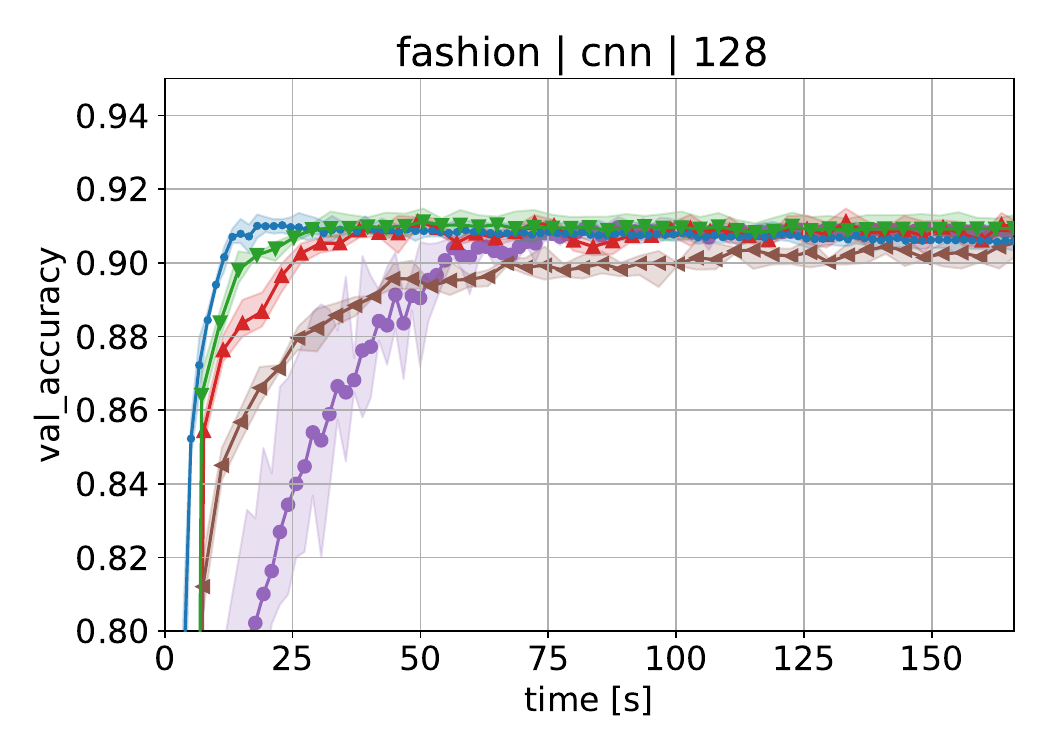}}
	\hfill
	\subfloat{\includegraphics[width=.32\textwidth]{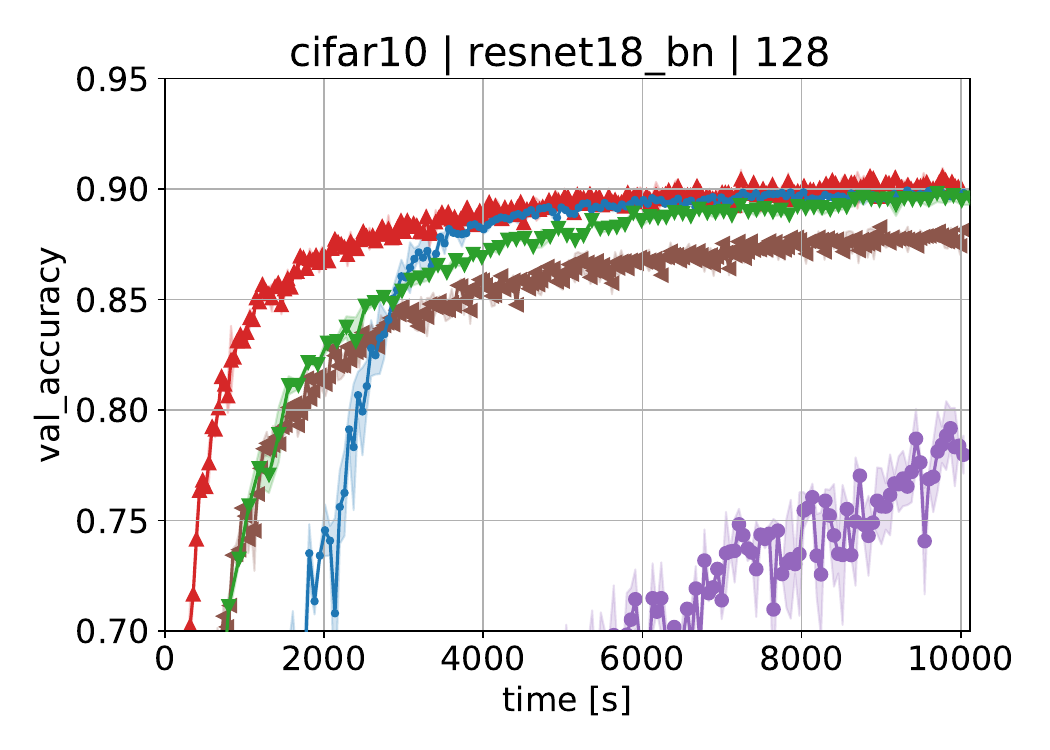}}
	\\
	\setcounter{subfigure}{0}
	\subfloat[Multi-layer perceptron on \texttt{MNIST} dataset.]{\includegraphics[width=.32\textwidth]{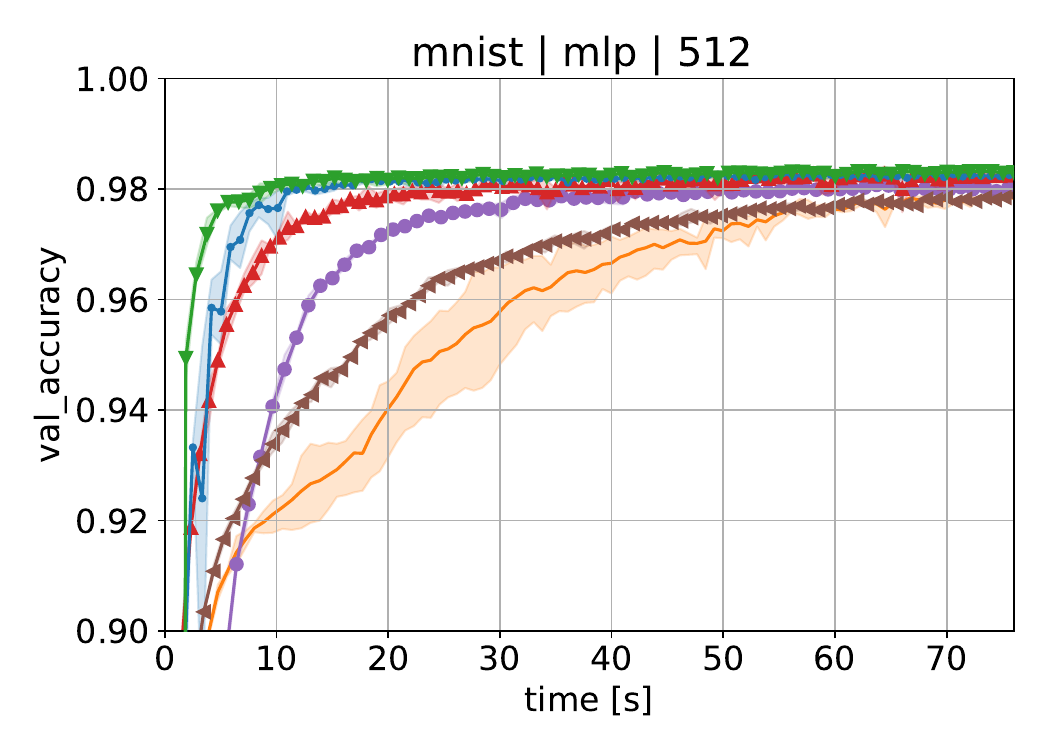}}
	\hfill
	\subfloat[CNN on \texttt{FashionMNIST} dataset.]{\includegraphics[width=.32\textwidth]{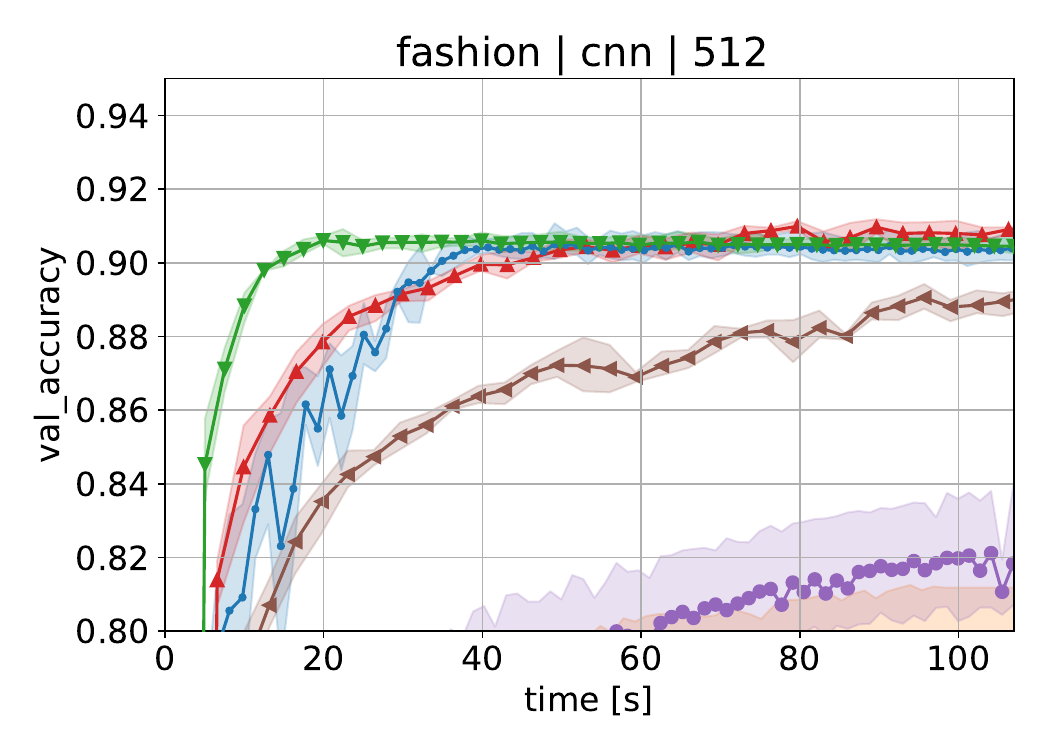}}
	\hfill
	\subfloat[ResNet18 on \texttt{CIFAR10} dataset. \label{fig_acc_cifar10}]{\includegraphics[width=.32\textwidth]{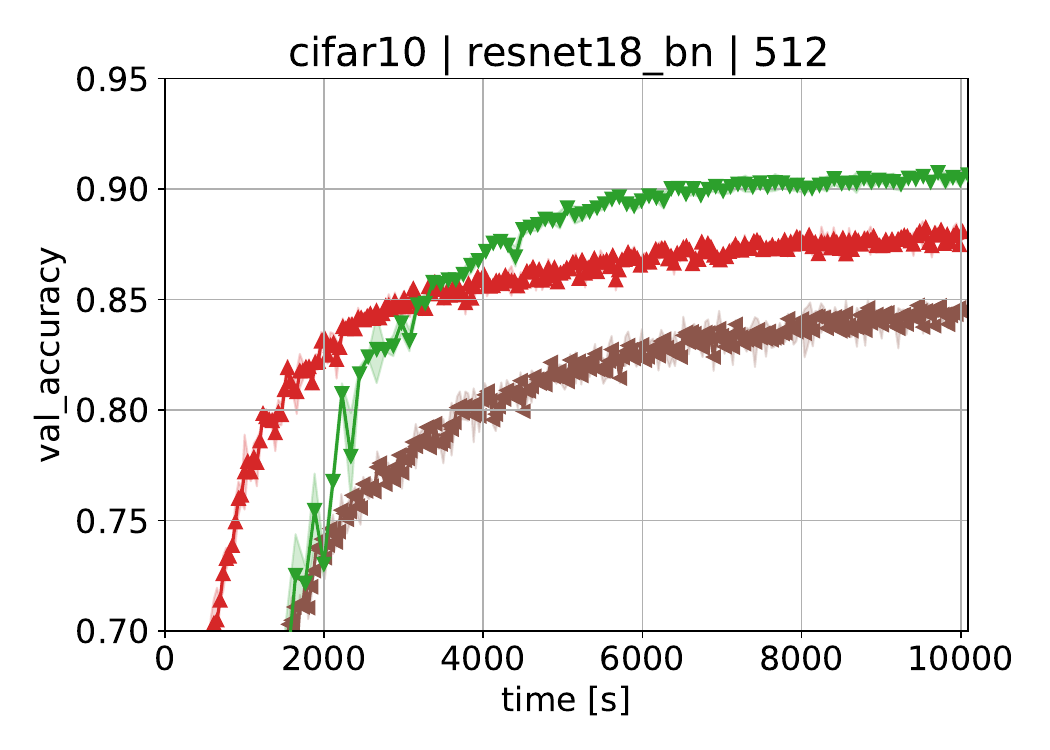}}
	\\
	
	\caption{Comparison of the proposed MBCG\_FR algorithm with other algorithms in the literature. The validation accuracy over time is shown for each problem.}
	
	\label{fig:comparison_accuracies}
\end{figure}

We report in Figure \ref{fig:comparison_plots} and in Figure \ref{fig:comparison_accuracies} the comparison of MBCG\_FR to its competitors in terms of the training loss and the validation accuracy. The algorithms are compared in terms of wall-clock execution time, since employing mini-batch overlap changes the number of updates per epoch, making epoch-based comparison unfair. The averages of 3 runs with different random seeds are reported. 
Standard parameters are employed for the monotone line-search of SLS and MSL\_SGDM, with $\gamma = 0.1$ and $\delta = 0.5$, and also for the nonmonotone line search of PoNoS and MBCG\_FR, with $\gamma=0.5$ and $\delta = 0.5$. For SGD with momentum we use $\alpha = 10^{-2}, \beta=0.9$ for convex problems and $\alpha = 10^{-3}, \beta=0.9$ for nonconvex ones. For Adam we use $\alpha=10^{-3}$ for convex problems and $\alpha=10^{-4}$ for nonconvex ones. For both PoNoS and MBCG\_FR we use $c=1$ in the definition of the generalized SPS, except for the problem of training the ResNet18 architecture on \texttt{CIFAR10} dataset, where $c=0.2$ is employed. Reference algorithms utilize mini-batch persistency only in configurations specified in Table \ref{tab:overlap_useful} with the values there reported, whereas MBCG\_FR always uses a 50\% overlap.
	
As shown in Figure \ref{fig:comparison_plots}, MBCG\_FR is extremely effective at reducing the training loss. In all convex tasks it is capable of obtaining high-quality solutions more quickly than the other algorithms considered. In general, MBCG\_FR is also competitive in nonconvex problems, especially for a batch size of 512. Moreover, looking at trend of the validation accuracy reported in Figure \ref{fig:comparison_accuracies}, we can note that even in cases where Adam is slightly faster, MBCG\_FR is capable of reaching top out-of-sample prediction results. In addition, examining Figure \ref{fig_acc_cifar10}, we can note that the best overall validation accuracy value is reached on the \texttt{CIFAR10} problem by MBCG\_FR with a batch size of 512. 

Based on the results reported in this section, we conclude that the proposed MBCG-DP algorithm is capable of achieving state-of-the-art performance in both convex and nonconvex finite-sum problems, making it particularly suitable for large-scale tasks and machine learning applications. This is especially true when considerable computing resources are available, allowing larger batch sizes to be employed.

\section{Conclusions}
\label{sec:conc}
In this work, we introduced an algorithmic framework based on stochastic line searches and momentum directions to tackle nonlinear finite-sum optimization problems in deep learning scenarios. In order to truly benefit from the addition of momentum terms in the search direction, the method leverages the concept of mini-batch persistency. The resulting approach, for which we state formal convergence results under suitable assumptions, proves to be computationally efficient and effective at solving the class of problems under consideration. In addition, our empirical analysis shows that, for simpler models or small architectures, any algorithm could benefit from the usage of mini-batch persistency.

There are two main research directions that shall be followed at this point: from the one hand, a deep investigation of the convergence properties of algorithms that employ mini-batch persistency in absence of bias-correction terms would be of significant impact. On the other hand, further computational experiments on even larger models, such as transformer architectures, would be important to assess the usefulness of the proposed approach in the most modern applications.

\section*{Declarations}

\subsection*{Funding}

No funding was received for conducting this study.

\subsection*{Competing interests}

The authors have no competing interests to declare that are relevant to the content of this article.

\subsection*{Data Availability Statement}
Data sharing is not applicable to this article as no new data were created or analyzed in this study.

\subsection*{Code Availability Statement}
The code developed for the experimental part of this paper is publicly available at \href{https://github.com/dadoPuccio/MB-Conjugate-Gradient-DP}{github.com/dadoPuccio/MB-Conjugate-Gradient-DP}.

\subsection*{Acknowledgments}
The authors are grateful to the Editor and the reviewers of this manuscript for their constructive comments that helped us to significantly improve the quality of this work. We are very thankful to Professors M.\ Sciandrone, S.\ Lucidi and G.\ Liuzzi for the useful discussions in the early stages of this research.

\bibliography{sn-bibliography}


\begin{thebibliography}{46}
\ifx \bisbn   \undefined \def \bisbn  #1{ISBN #1}\fi
\ifx \binits  \undefined \def \binits#1{#1}\fi
\ifx \bauthor  \undefined \def \bauthor#1{#1}\fi
\ifx \batitle  \undefined \def \batitle#1{#1}\fi
\ifx \bjtitle  \undefined \def \bjtitle#1{#1}\fi
\ifx \bvolume  \undefined \def \bvolume#1{\textbf{#1}}\fi
\ifx \byear  \undefined \def \byear#1{#1}\fi
\ifx \bissue  \undefined \def \bissue#1{#1}\fi
\ifx \bfpage  \undefined \def \bfpage#1{#1}\fi
\ifx \blpage  \undefined \def \blpage #1{#1}\fi
\ifx \burl  \undefined \def \burl#1{\textsf{#1}}\fi
\ifx \doiurl  \undefined \def \doiurl#1{\url{https://doi.org/#1}}\fi
\ifx \betal  \undefined \def \betal{\textit{et al.}}\fi
\ifx \binstitute  \undefined \def \binstitute#1{#1}\fi
\ifx \binstitutionaled  \undefined \def \binstitutionaled#1{#1}\fi
\ifx \bctitle  \undefined \def \bctitle#1{#1}\fi
\ifx \beditor  \undefined \def \beditor#1{#1}\fi
\ifx \bpublisher  \undefined \def \bpublisher#1{#1}\fi
\ifx \bbtitle  \undefined \def \bbtitle#1{#1}\fi
\ifx \bedition  \undefined \def \bedition#1{#1}\fi
\ifx \bseriesno  \undefined \def \bseriesno#1{#1}\fi
\ifx \blocation  \undefined \def \blocation#1{#1}\fi
\ifx \bsertitle  \undefined \def \bsertitle#1{#1}\fi
\ifx \bsnm \undefined \def \bsnm#1{#1}\fi
\ifx \bsuffix \undefined \def \bsuffix#1{#1}\fi
\ifx \bparticle \undefined \def \bparticle#1{#1}\fi
\ifx \barticle \undefined \def \barticle#1{#1}\fi
\bibcommenthead
\ifx \bconfdate \undefined \def \bconfdate #1{#1}\fi
\ifx \botherref \undefined \def \botherref #1{#1}\fi
\ifx \url \undefined \def \url#1{\textsf{#1}}\fi
\ifx \bchapter \undefined \def \bchapter#1{#1}\fi
\ifx \bbook \undefined \def \bbook#1{#1}\fi
\ifx \bcomment \undefined \def \bcomment#1{#1}\fi
\ifx \oauthor \undefined \def \oauthor#1{#1}\fi
\ifx \citeauthoryear \undefined \def \citeauthoryear#1{#1}\fi
\ifx \endbibitem  \undefined \def \endbibitem {}\fi
\ifx \bconflocation  \undefined \def \bconflocation#1{#1}\fi
\ifx \arxivurl  \undefined \def \arxivurl#1{\textsf{#1}}\fi
\csname PreBibitemsHook\endcsname

\bibitem[\protect\citeauthoryear{Robbins and Monro}{1951}]{robbins1951}
\begin{barticle}
\bauthor{\bsnm{Robbins}, \binits{H.}},
\bauthor{\bsnm{Monro}, \binits{S.}}:
\batitle{{A Stochastic Approximation Method}}.
\bjtitle{The Annals of Mathematical Statistics}
\bvolume{22}(\bissue{3}),
\bfpage{400}--\blpage{407}
(\byear{1951})
\doiurl{10.1214/aoms/1177729586}
\end{barticle}
\endbibitem

\bibitem[\protect\citeauthoryear{Polyak}{1964}]{polyak1964}
\begin{barticle}
\bauthor{\bsnm{Polyak}, \binits{B.}}:
\batitle{Some methods of speeding up the convergence of iteration methods}.
\bjtitle{USSR Computational Mathematics and Mathematical Physics}
\bvolume{4}(\bissue{5}),
\bfpage{1}--\blpage{17}
(\byear{1964})
\doiurl{10.1016/0041-5553(64)90137-5}
\end{barticle}
\endbibitem

\bibitem[\protect\citeauthoryear{Nesterov}{1983}]{nesterov1983}
\begin{barticle}
\bauthor{\bsnm{Nesterov}, \binits{Y.}}:
\batitle{A method for solving the convex programming problem with convergence
  rate $\mathcal{O}(1/k^2)$}.
\bjtitle{Proceedings of the USSR Academy of Sciences}
\bvolume{269},
\bfpage{543}--\blpage{547}
(\byear{1983})
\end{barticle}
\endbibitem

\bibitem[\protect\citeauthoryear{Bottou et~al.}{2018}]{bottou2018}
\begin{barticle}
\bauthor{\bsnm{Bottou}, \binits{L.}},
\bauthor{\bsnm{Curtis}, \binits{F.E.}},
\bauthor{\bsnm{Nocedal}, \binits{J.}}:
\batitle{Optimization methods for large-scale machine learning}.
\bjtitle{SIAM Review}
\bvolume{60}(\bissue{2}),
\bfpage{223}--\blpage{311}
(\byear{2018})
\doiurl{10.1137/16M1080173}
{\href{https://arxiv.org/abs/https://doi.org/10.1137/16M1080173}{{https://doi.org/10.1137/16M1080173}}}
\end{barticle}
\endbibitem

\bibitem[\protect\citeauthoryear{Duchi et~al.}{2011}]{duchi2011}
\begin{barticle}
\bauthor{\bsnm{Duchi}, \binits{J.}},
\bauthor{\bsnm{Hazan}, \binits{E.}},
\bauthor{\bsnm{Singer}, \binits{Y.}}:
\batitle{Adaptive subgradient methods for online learning and stochastic
  optimization}.
\bjtitle{Journal of Machine Learning Research}
\bvolume{12}(\bissue{61}),
\bfpage{2121}--\blpage{2159}
(\byear{2011})
\end{barticle}
\endbibitem

\bibitem[\protect\citeauthoryear{Tieleman and Hinton}{2012}]{tieleman2012}
\begin{barticle}
\bauthor{\bsnm{Tieleman}, \binits{T.}},
\bauthor{\bsnm{Hinton}, \binits{G.}}:
\batitle{Lecture 6.5-rmsprop: Divide the gradient by a running average of its
  recent magnitude}.
\bjtitle{COURSERA: Neural networks for machine learning}
\bvolume{4}(\bissue{2}),
\bfpage{26}--\blpage{31}
(\byear{2012})
\end{barticle}
\endbibitem

\bibitem[\protect\citeauthoryear{Zeiler}{2012}]{zeiler2012}
\begin{botherref}
\oauthor{\bsnm{Zeiler}, \binits{M.D.}}:
{ADADELTA:} an adaptive learning rate method.
CoRR
\textbf{abs/1212.5701}
(2012)
{\href{https://arxiv.org/abs/1212.5701}{{1212.5701}}}
\end{botherref}
\endbibitem

\bibitem[\protect\citeauthoryear{Kingma and Ba}{2015}]{kingma2015}
\begin{bchapter}
\bauthor{\bsnm{Kingma}, \binits{D.P.}},
\bauthor{\bsnm{Ba}, \binits{J.}}:
\bctitle{Adam: {A} method for stochastic optimization}.
In: \beditor{\bsnm{Bengio}, \binits{Y.}},
\beditor{\bsnm{LeCun}, \binits{Y.}} (eds.)
\bbtitle{3rd International Conference on Learning Representations, {ICLR} 2015,
  San Diego, CA, USA, May 7-9, 2015, Conference Track Proceedings}
(\byear{2015}).
\burl{http://arxiv.org/abs/1412.6980}
\end{bchapter}
\endbibitem

\bibitem[\protect\citeauthoryear{Schmidt et~al.}{2017}]{schmidt2017}
\begin{barticle}
\bauthor{\bsnm{Schmidt}, \binits{M.}},
\bauthor{\bsnm{Le~Roux}, \binits{N.}},
\bauthor{\bsnm{Bach}, \binits{F.}}:
\batitle{Minimizing finite sums with the stochastic average gradient}.
\bjtitle{Mathematical Programming}
\bvolume{162}(\bissue{1}),
\bfpage{83}--\blpage{112}
(\byear{2017})
\doiurl{10.1007/s10107-016-1030-6}
\end{barticle}
\endbibitem

\bibitem[\protect\citeauthoryear{Johnson and Zhang}{2013}]{rie2013}
\begin{bchapter}
\bauthor{\bsnm{Johnson}, \binits{R.}},
\bauthor{\bsnm{Zhang}, \binits{T.}}:
\bctitle{Accelerating stochastic gradient descent using predictive variance
  reduction}.
In: \beditor{\bsnm{Burges}, \binits{C.J.}},
\beditor{\bsnm{Bottou}, \binits{L.}},
\beditor{\bsnm{Welling}, \binits{M.}},
\beditor{\bsnm{Ghahramani}, \binits{Z.}},
\beditor{\bsnm{Weinberger}, \binits{K.Q.}} (eds.)
\bbtitle{Advances in Neural Information Processing Systems},
vol. \bseriesno{26}.
\bpublisher{Curran Associates, Inc.},
\blocation{Lake Tahoe}
(\byear{2013}).
\burl{\url{https://proceedings.neurips.cc/paper_files/paper/2013/file/ac1dd209cbcc5e5d1c6e28598e8cbbe8-Paper.pdf}}
\end{bchapter}
\endbibitem

\bibitem[\protect\citeauthoryear{Gower et~al.}{2023}]{gower2023}
\begin{bchapter}
\bauthor{\bsnm{Gower}, \binits{R.M.}},
\bauthor{\bsnm{Kunstner}, \binits{F.}},
\bauthor{\bsnm{Schmidt}, \binits{M.}}:
\bctitle{Variance reduced model based methods: New rates and adaptive step
  sizes}.
In: \bbtitle{OPT 2023: Optimization for Machine Learning}
(\byear{2023}).
\burl{\url{https://openreview.net/forum?id=7Ie3aoqJg1}}
\end{bchapter}
\endbibitem

\bibitem[\protect\citeauthoryear{Defazio and Bottou}{2019}]{defazio2019}
\begin{bchapter}
\bauthor{\bsnm{Defazio}, \binits{A.}},
\bauthor{\bsnm{Bottou}, \binits{L.}}:
\bctitle{On the ineffectiveness of variance reduced optimization for deep
  learning}.
In: \bbtitle{Advances in Neural Information Processing Systems},
vol. \bseriesno{32}.
\bpublisher{Curran Associates, Inc.},
\blocation{57 Morehouse Lane, Red Hook, New York, USA}
(\byear{2019}).
\burl{\url{https://proceedings.neurips.cc/paper_files/paper/2019/file/84d2004bf28a2095230e8e14993d398d-Paper.pdf}}
\end{bchapter}
\endbibitem

\bibitem[\protect\citeauthoryear{Ma et~al.}{2018}]{ma2018}
\begin{bchapter}
\bauthor{\bsnm{Ma}, \binits{S.}},
\bauthor{\bsnm{Bassily}, \binits{R.}},
\bauthor{\bsnm{Belkin}, \binits{M.}}:
\bctitle{The power of interpolation: Understanding the effectiveness of sgd in
  modern over-parametrized learning}.
In: \bbtitle{International Conference on Machine Learning},
pp. \bfpage{3325}--\blpage{3334}
(\byear{2018}).
\bcomment{PMLR}
\end{bchapter}
\endbibitem

\bibitem[\protect\citeauthoryear{Vaswani et~al.}{2020}]{vaswani2020}
\begin{botherref}
\oauthor{\bsnm{Vaswani}, \binits{S.}},
\oauthor{\bsnm{Kunstner}, \binits{F.}},
\oauthor{\bsnm{Laradji}, \binits{I.H.}},
\oauthor{\bsnm{Meng}, \binits{S.Y.}},
\oauthor{\bsnm{Schmidt}, \binits{M.}},
\oauthor{\bsnm{Lacoste{-}Julien}, \binits{S.}}:
Adaptive gradient methods converge faster with over-parameterization (and you
  can do a line-search).
CoRR
\textbf{abs/2006.06835}
(2020)
{\href{https://arxiv.org/abs/2006.06835}{{2006.06835}}}
\end{botherref}
\endbibitem

\bibitem[\protect\citeauthoryear{Mutschler and Zell}{2020}]{mutschler2020}
\begin{bchapter}
\bauthor{\bsnm{Mutschler}, \binits{M.}},
\bauthor{\bsnm{Zell}, \binits{A.}}:
\bctitle{Parabolic approximation line search for dnns}.
In: \beditor{\bsnm{Larochelle}, \binits{H.}},
\beditor{\bsnm{Ranzato}, \binits{M.}},
\beditor{\bsnm{Hadsell}, \binits{R.}},
\beditor{\bsnm{Balcan}, \binits{M.F.}},
\beditor{\bsnm{Lin}, \binits{H.}} (eds.)
\bbtitle{Advances in Neural Information Processing Systems},
vol. \bseriesno{33},
pp. \bfpage{5405}--\blpage{5416}.
\bpublisher{Curran Associates, Inc.},
\blocation{-}
(\byear{2020}).
\burl{\url{https://proceedings.neurips.cc/paper_files/paper/2020/file/3a30be93eb45566a90f4e95ee72a089a-Paper.pdf}}
\end{bchapter}
\endbibitem

\bibitem[\protect\citeauthoryear{Loizou et~al.}{2021}]{loizou2021}
\begin{bchapter}
\bauthor{\bsnm{Loizou}, \binits{N.}},
\bauthor{\bsnm{Vaswani}, \binits{S.}},
\bauthor{\bsnm{Hadj~Laradji}, \binits{I.}},
\bauthor{\bsnm{Lacoste-Julien}, \binits{S.}}:
\bctitle{Stochastic polyak step-size for sgd: An adaptive learning rate for
  fast convergence}.
In: \bbtitle{Proceedings of The 24th International Conference on Artificial
  Intelligence and Statistics},
vol. \bseriesno{130},
pp. \bfpage{1306}--\blpage{1314}.
\bpublisher{PMLR},
\blocation{-}
(\byear{2021}).
\burl{\url{https://proceedings.mlr.press/v130/loizou21a.html}}
\end{bchapter}
\endbibitem

\bibitem[\protect\citeauthoryear{Vaswani et~al.}{2019}]{vaswani2019}
\begin{bchapter}
\bauthor{\bsnm{Vaswani}, \binits{S.}},
\bauthor{\bsnm{Mishkin}, \binits{A.}},
\bauthor{\bsnm{Laradji}, \binits{I.}},
\bauthor{\bsnm{Schmidt}, \binits{M.}},
\bauthor{\bsnm{Gidel}, \binits{G.}},
\bauthor{\bsnm{Lacoste-Julien}, \binits{S.}}:
\bctitle{Painless stochastic gradient: Interpolation, line-search, and
  convergence rates}.
In: \bbtitle{Advances in Neural Information Processing Systems},
vol. \bseriesno{32}.
\bpublisher{Curran Associates, Inc.},
\blocation{Vancouver Convention Center, Vancouver, CA}
(\byear{2019}).
\burl{\url{https://proceedings.neurips.cc/paper_files/paper/2019/file/2557911c1bf75c2b643afb4ecbfc8ec2-Paper.pdf}}
\end{bchapter}
\endbibitem

\bibitem[\protect\citeauthoryear{Galli et~al.}{2023}]{galli2023}
\begin{bchapter}
\bauthor{\bsnm{Galli}, \binits{L.}},
\bauthor{\bsnm{Rauhut}, \binits{H.}},
\bauthor{\bsnm{Schmidt}, \binits{M.}}:
\bctitle{Don't be so monotone: Relaxing stochastic line search in
  over-parameterized models}.
In: \beditor{\bsnm{Oh}, \binits{A.}},
\beditor{\bsnm{Naumann}, \binits{T.}},
\beditor{\bsnm{Globerson}, \binits{A.}},
\beditor{\bsnm{Saenko}, \binits{K.}},
\beditor{\bsnm{Hardt}, \binits{M.}},
\beditor{\bsnm{Levine}, \binits{S.}} (eds.)
\bbtitle{Advances in Neural Information Processing Systems},
vol. \bseriesno{36},
pp. \bfpage{34752}--\blpage{34764}.
\bpublisher{Curran Associates, Inc.},
\blocation{Ernest N. Morial Convention Center, New Orleans, USA}
(\byear{2023}).
\burl{\url{https://proceedings.neurips.cc/paper_files/paper/2023/file/6d0bf1265ea9635fb4f9d56f16d7efb2-Paper-Conference.pdf}}
\end{bchapter}
\endbibitem

\bibitem[\protect\citeauthoryear{Sutskever
  et~al.}{2013}]{sutskever2013importance}
\begin{bchapter}
\bauthor{\bsnm{Sutskever}, \binits{I.}},
\bauthor{\bsnm{Martens}, \binits{J.}},
\bauthor{\bsnm{Dahl}, \binits{G.}},
\bauthor{\bsnm{Hinton}, \binits{G.}}:
\bctitle{On the importance of initialization and momentum in deep learning}.
In: \bbtitle{International Conference on Machine Learning},
pp. \bfpage{1139}--\blpage{1147}
(\byear{2013}).
\bcomment{pmlr}
\end{bchapter}
\endbibitem

\bibitem[\protect\citeauthoryear{Sebbouh et~al.}{2021}]{sebbouh2021almost}
\begin{bchapter}
\bauthor{\bsnm{Sebbouh}, \binits{O.}},
\bauthor{\bsnm{Gower}, \binits{R.M.}},
\bauthor{\bsnm{Defazio}, \binits{A.}}:
\bctitle{Almost sure convergence rates for stochastic gradient descent and
  stochastic heavy ball}.
In: \bbtitle{Conference on Learning Theory},
pp. \bfpage{3935}--\blpage{3971}
(\byear{2021}).
\bcomment{PMLR}
\end{bchapter}
\endbibitem

\bibitem[\protect\citeauthoryear{Tseng}{1998}]{tseng1998incremental}
\begin{barticle}
\bauthor{\bsnm{Tseng}, \binits{P.}}:
\batitle{An incremental gradient (-projection) method with momentum term and
  adaptive stepsize rule}.
\bjtitle{SIAM Journal on Optimization}
\bvolume{8}(\bissue{2}),
\bfpage{506}--\blpage{531}
(\byear{1998})
\end{barticle}
\endbibitem

\bibitem[\protect\citeauthoryear{Jelassi and Li}{2022}]{jelassi2022towards}
\begin{bchapter}
\bauthor{\bsnm{Jelassi}, \binits{S.}},
\bauthor{\bsnm{Li}, \binits{Y.}}:
\bctitle{Towards understanding how momentum improves generalization in deep
  learning}.
In: \bbtitle{International Conference on Machine Learning},
pp. \bfpage{9965}--\blpage{10040}
(\byear{2022}).
\bcomment{PMLR}
\end{bchapter}
\endbibitem

\bibitem[\protect\citeauthoryear{Gitman et~al.}{2019}]{gitman2019understanding}
\begin{botherref}
\oauthor{\bsnm{Gitman}, \binits{I.}},
\oauthor{\bsnm{Lang}, \binits{H.}},
\oauthor{\bsnm{Zhang}, \binits{P.}},
\oauthor{\bsnm{Xiao}, \binits{L.}}:
Understanding the role of momentum in stochastic gradient methods.
Advances in neural information processing systems
\textbf{32}
(2019)
\end{botherref}
\endbibitem

\bibitem[\protect\citeauthoryear{Wang et~al.}{2023}]{wang2023}
\begin{bchapter}
\bauthor{\bsnm{Wang}, \binits{X.}},
\bauthor{\bsnm{Johansson}, \binits{M.}},
\bauthor{\bsnm{Zhang}, \binits{T.}}:
\bctitle{Generalized polyak step size for first order optimization with
  momentum}.
In: \beditor{\bsnm{Krause}, \binits{A.}},
\beditor{\bsnm{Brunskill}, \binits{E.}},
\beditor{\bsnm{Cho}, \binits{K.}},
\beditor{\bsnm{Engelhardt}, \binits{B.}},
\beditor{\bsnm{Sabato}, \binits{S.}},
\beditor{\bsnm{Scarlett}, \binits{J.}} (eds.)
\bbtitle{Proceedings of the 40th International Conference on Machine Learning}.
\bsertitle{Proceedings of Machine Learning Research},
vol. \bseriesno{202},
pp. \bfpage{35836}--\blpage{35863}.
\bpublisher{PMLR},
\blocation{Ernest N. Morial Convention Center, New Orleans, USA}
(\byear{2023}).
\burl{\url{https://proceedings.mlr.press/v202/wang23l.html}}
\end{bchapter}
\endbibitem

\bibitem[\protect\citeauthoryear{Fan et~al.}{2023}]{fan2023}
\begin{bchapter}
\bauthor{\bsnm{Fan}, \binits{C.}},
\bauthor{\bsnm{Vaswani}, \binits{S.}},
\bauthor{\bsnm{Thrampoulidis}, \binits{C.}},
\bauthor{\bsnm{Schmidt}, \binits{M.}}:
\bctitle{{MSL}: An adaptive momentem-based stochastic line-search framework}.
In: \bbtitle{OPT 2023: Optimization for Machine Learning}
(\byear{2023}).
\burl{\url{https://openreview.net/forum?id=UfvQbl7Kpx}}
\end{bchapter}
\endbibitem

\bibitem[\protect\citeauthoryear{Fischetti et~al.}{2018}]{fischetti2018}
\begin{botherref}
\oauthor{\bsnm{Fischetti}, \binits{M.}},
\oauthor{\bsnm{Mandatelli}, \binits{I.}},
\oauthor{\bsnm{Salvagnin}, \binits{D.}}:
Faster sgd training by minibatch persistency
(2018)
{\href{https://arxiv.org/abs/1806.07353}{{arXiv:1806.07353}}}
{[cs.LG]}
\end{botherref}
\endbibitem

\bibitem[\protect\citeauthoryear{Grippo and
  Sciandrone}{2023}]{grippo2023introduction}
\begin{bbook}
\bauthor{\bsnm{Grippo}, \binits{L.}},
\bauthor{\bsnm{Sciandrone}, \binits{M.}}:
\bbtitle{Introduction to Methods for Nonlinear Optimization}
vol. \bseriesno{152}.
\bpublisher{Springer},
\blocation{Cham (CH)}
(\byear{2023})
\end{bbook}
\endbibitem

\bibitem[\protect\citeauthoryear{Chan-Renous-Legoubin and
  Royer}{2022}]{chan2022nonlinear}
\begin{barticle}
\bauthor{\bsnm{Chan--Renous-Legoubin}, \binits{R.}},
\bauthor{\bsnm{Royer}, \binits{C.W.}}:
\batitle{A nonlinear conjugate gradient method with complexity guarantees and
  its application to nonconvex regression}.
\bjtitle{EURO Journal on Computational Optimization}
\bvolume{10},
\bfpage{100044}
(\byear{2022})
\end{barticle}
\endbibitem

\bibitem[\protect\citeauthoryear{Lapucci et~al.}{2024}]{lapucci2024globally}
\begin{botherref}
\oauthor{\bsnm{Lapucci}, \binits{M.}},
\oauthor{\bsnm{Liuzzi}, \binits{G.}},
\oauthor{\bsnm{Lucidi}, \binits{S.}},
\oauthor{\bsnm{Sciandrone}, \binits{M.}}:
A globally convergent gradient method with momentum.
arXiv preprint arXiv:2403.17613
(2024)
\end{botherref}
\endbibitem

\bibitem[\protect\citeauthoryear{Lapucci and
  Pucci}{2025}]{lapucci2024convergenceconditionsstochasticline}
\begin{barticle}
\bauthor{\bsnm{Lapucci}, \binits{M.}},
\bauthor{\bsnm{Pucci}, \binits{D.}}:
\batitle{Convergence conditions for stochastic line search based optimization
  of over-parametrized models}.
\bjtitle{Optimization}
\bvolume{0}(\bissue{0}),
\bfpage{1}--\blpage{20}
(\byear{2025})
\doiurl{10.1080/02331934.2025.2551250}
\end{barticle}
\endbibitem

\bibitem[\protect\citeauthoryear{Ghadimi and Lan}{2013}]{ghadimi2013}
\begin{barticle}
\bauthor{\bsnm{Ghadimi}, \binits{S.}},
\bauthor{\bsnm{Lan}, \binits{G.}}:
\batitle{Stochastic first- and zeroth-order methods for nonconvex stochastic
  programming}.
\bjtitle{SIAM Journal on Optimization}
\bvolume{23}(\bissue{4}),
\bfpage{2341}--\blpage{2368}
(\byear{2013})
\doiurl{10.1137/120880811}
{\href{https://arxiv.org/abs/https://doi.org/10.1137/120880811}{{https://doi.org/10.1137/120880811}}}
\end{barticle}
\endbibitem

\bibitem[\protect\citeauthoryear{Liang and Rakhlin}{2020}]{liang2020}
\begin{barticle}
\bauthor{\bsnm{Liang}, \binits{T.}},
\bauthor{\bsnm{Rakhlin}, \binits{A.}}:
\batitle{{Just interpolate: Kernel “Ridgeless” regression can generalize}}.
\bjtitle{The Annals of Statistics}
\bvolume{48}(\bissue{3}),
\bfpage{1329}--\blpage{1347}
(\byear{2020})
\doiurl{10.1214/19-AOS1849}
\end{barticle}
\endbibitem

\bibitem[\protect\citeauthoryear{Liu et~al.}{2022}]{liu2022}
\begin{barticle}
\bauthor{\bsnm{Liu}, \binits{C.}},
\bauthor{\bsnm{Zhu}, \binits{L.}},
\bauthor{\bsnm{Belkin}, \binits{M.}}:
\batitle{Loss landscapes and optimization in over-parameterized non-linear
  systems and neural networks}.
\bjtitle{Applied and Computational Harmonic Analysis}
\bvolume{59},
\bfpage{85}--\blpage{116}
(\byear{2022})
\doiurl{10.1016/j.acha.2021.12.009} .
\bcomment{Special Issue on Harmonic Analysis and Machine Learning}
\end{barticle}
\endbibitem

\bibitem[\protect\citeauthoryear{Polyak}{1987}]{polyak1987introduction}
\begin{barticle}
\bauthor{\bsnm{Polyak}, \binits{B.T.}}:
\batitle{Introduction to optimization. {O}ptimization software}.
\bjtitle{Inc., Publications Division, New York}
\bvolume{1},
\bfpage{32}
(\byear{1987})
\end{barticle}
\endbibitem

\bibitem[\protect\citeauthoryear{Armijo}{1966}]{armijo1966}
\begin{barticle}
\bauthor{\bsnm{Armijo}, \binits{L.}}:
\batitle{{Minimization of functions having Lipschitz continuous first partial
  derivatives.}}
\bjtitle{Pacific Journal of Mathematics}
\bvolume{16}(\bissue{1}),
\bfpage{1}--\blpage{3}
(\byear{1966})
\end{barticle}
\endbibitem

\bibitem[\protect\citeauthoryear{Wright and
  Recht}{2022}]{wright2022optimization}
\begin{bbook}
\bauthor{\bsnm{Wright}, \binits{S.J.}},
\bauthor{\bsnm{Recht}, \binits{B.}}:
\bbtitle{Optimization for Data Analysis}.
\bpublisher{Cambridge University Press},
\blocation{Cambridge}
(\byear{2022})
\end{bbook}
\endbibitem

\bibitem[\protect\citeauthoryear{Chang and Lin}{2011}]{libsvm2011}
\begin{botherref}
\oauthor{\bsnm{Chang}, \binits{C.-C.}},
\oauthor{\bsnm{Lin}, \binits{C.-J.}}:
Libsvm: A library for support vector machines.
ACM Trans. Intell. Syst. Technol.
\textbf{2}(3)
(2011)
\doiurl{10.1145/1961189.1961199}
\end{botherref}
\endbibitem

\bibitem[\protect\citeauthoryear{Lecun et~al.}{1998}]{mnist1998}
\begin{barticle}
\bauthor{\bsnm{Lecun}, \binits{Y.}},
\bauthor{\bsnm{Bottou}, \binits{L.}},
\bauthor{\bsnm{Bengio}, \binits{Y.}},
\bauthor{\bsnm{Haffner}, \binits{P.}}:
\batitle{Gradient-based learning applied to document recognition}.
\bjtitle{Proceedings of the IEEE}
\bvolume{86}(\bissue{11}),
\bfpage{2278}--\blpage{2324}
(\byear{1998})
\doiurl{10.1109/5.726791}
\end{barticle}
\endbibitem

\bibitem[\protect\citeauthoryear{G{\"u}rb{\"u}zbalaban
  et~al.}{2021}]{gurbuzbalaban2021random}
\begin{barticle}
\bauthor{\bsnm{G{\"u}rb{\"u}zbalaban}, \binits{M.}},
\bauthor{\bsnm{Ozdaglar}, \binits{A.}},
\bauthor{\bsnm{Parrilo}, \binits{P.A.}}:
\batitle{Why random reshuffling beats stochastic gradient descent}.
\bjtitle{Mathematical Programming}
\bvolume{186},
\bfpage{49}--\blpage{84}
(\byear{2021})
\end{barticle}
\endbibitem

\bibitem[\protect\citeauthoryear{Jin et~al.}{2019}]{jin2019stochastic}
\begin{barticle}
\bauthor{\bsnm{Jin}, \binits{X.-B.}},
\bauthor{\bsnm{Zhang}, \binits{X.-Y.}},
\bauthor{\bsnm{Huang}, \binits{K.}},
\bauthor{\bsnm{Geng}, \binits{G.-G.}}:
\batitle{Stochastic conjugate gradient algorithm with variance reduction}.
\bjtitle{IEEE Transactions on Neural Networks and Learning Systems}
\bvolume{30}(\bissue{5}),
\bfpage{1360}--\blpage{1369}
(\byear{2019})
\doiurl{10.1109/TNNLS.2018.2868835}
\end{barticle}
\endbibitem

\bibitem[\protect\citeauthoryear{Kou and Yang}{2023}]{kou2023mini}
\begin{barticle}
\bauthor{\bsnm{Kou}, \binits{C.}},
\bauthor{\bsnm{Yang}, \binits{H.}}:
\batitle{A mini-batch stochastic conjugate gradient algorithm with variance
  reduction}.
\bjtitle{Journal of Global Optimization}
\bvolume{87}(\bissue{2}),
\bfpage{1009}--\blpage{1025}
(\byear{2023})
\end{barticle}
\endbibitem

\bibitem[\protect\citeauthoryear{Vaswani et~al.}{2019}]{vaswani2019fast}
\begin{bchapter}
\bauthor{\bsnm{Vaswani}, \binits{S.}},
\bauthor{\bsnm{Bach}, \binits{F.}},
\bauthor{\bsnm{Schmidt}, \binits{M.}}:
\bctitle{Fast and faster convergence of sgd for over-parameterized models and
  an accelerated perceptron}.
In: \bbtitle{The 22nd International Conference on Artificial Intelligence and
  Statistics},
pp. \bfpage{1195}--\blpage{1204}
(\byear{2019}).
\bcomment{PMLR}
\end{bchapter}
\endbibitem

\bibitem[\protect\citeauthoryear{Mishkin}{2020}]{mishkin2020interpolation}
\begin{botherref}
\oauthor{\bsnm{Mishkin}, \binits{A.}}:
Interpolation, growth conditions, and stochastic gradient descent.
PhD thesis,
University of British Columbia
(2020)
\end{botherref}
\endbibitem

\bibitem[\protect\citeauthoryear{Xiao et~al.}{2017}]{xiao2017fashionMNIST}
\begin{botherref}
\oauthor{\bsnm{Xiao}, \binits{H.}},
\oauthor{\bsnm{Rasul}, \binits{K.}},
\oauthor{\bsnm{Vollgraf}, \binits{R.}}:
Fashion-MNIST: a Novel Image Dataset for Benchmarking Machine Learning
  Algorithms
\end{botherref}
\endbibitem

\bibitem[\protect\citeauthoryear{Krizhevsky}{2009}]{krizhevsky2009learning}
\begin{botherref}
\oauthor{\bsnm{Krizhevsky}, \binits{A.}}:
Learning multiple layers of features from tiny images
(2009)
\end{botherref}
\endbibitem

\bibitem[\protect\citeauthoryear{Civitelli et~al.}{2023}]{civitelli2023}
\begin{botherref}
\oauthor{\bsnm{Civitelli}, \binits{E.}},
\oauthor{\bsnm{Sortino}, \binits{A.}},
\oauthor{\bsnm{Lapucci}, \binits{M.}},
\oauthor{\bsnm{Bagattini}, \binits{F.}},
\oauthor{\bsnm{Galvan}, \binits{G.}}:
A robust initialization of residual blocks for effective resnet training
  without batch normalization.
IEEE Transactions on Neural Networks and Learning Systems,
1--6
(2023)
\doiurl{10.1109/TNNLS.2023.3325541}
\end{botherref}
\endbibitem

\end{thebibliography}

\end{document}